\def\titlerunning#1{\gdef\titrun{#1}}
\def\author#1{\gdef\autrun{\def\and{\unskip, }#1}\gdef\@author{#1}}
\def\email#1{e-mail: #1}
\def\subjclass#1{{\renewcommand{\thefootnote}{}%
\footnote{\emph{Mathematics Subject Classification (2020):} #1}}}
\def\keywords#1{\par\medskip
\noindent\textbf{Keywords.} #1}
\newtheorem{theorem}{Theorem}[section]
\newtheorem{lemma}[theorem]{Lemma}
\newtheorem{proposition}[theorem]{Proposition}
\newtheorem{corollary}[theorem]{Corollary}
\theoremstyle{definition}
\newtheorem{remark}[theorem]{Remark}
\newtheorem{example}[theorem]{Example}
\numberwithin{equation}{section}
\begin{document}

\baselineskip=17pt

\titlerunning{}

\title{Observability  for   heat equations with time-dependent analytic memory}

%\author[author1]{Gengsheng Wang}
%\author[author2]{Yubiao Zhang}
%\author[author3]{Enrique Zuazua}

% /thanks = /footnotemark[num] + /footnotetext[num]{text}
\author{
Gengsheng Wang\footnotemark[2]
 \and
Yubiao Zhang\footnotemark[2] \footnotemark[3] \footnotemark[1]
 \and
Enrique Zuazua \footnotemark[3] \footnotemark[4] \footnotemark[5]
}

\renewcommand{\thefootnote}{\fnsymbol{footnote}}
\footnotetext[1]{
    Corresponding author. 
    \email{\texttt{yubiao\b{ }zhang@yeah.net}}
}
\footnotetext[2]{
    Center for Applied Mathematics, Tianjin University,
    	300072 Tianjin,  China.
        \email{\texttt{wanggs62@yeah.net}}
        }
\footnotetext[3]{
    Chair for Dynamics, Control, Machine Learning  and Numerics, Alexander von Humboldt-Professorship, Department of Mathematics,  Friedrich-Alexander-Universit\"{a}t Erlangen-N\"{u}rnberg,
    	91058 Erlangen, Germany. 
        \email{\texttt{enrique.zuazua@fau.de}}
        }
\footnotetext[4]{
    Chair of Computational Mathematics, Fundaci\'{o}n Deusto,
    	Av. de las Universidades, 24,
    	48007 Bilbao, Basque Country, Spain}
\footnotetext[5]{
    Departamento de Matem\'{a}ticas,
    	Universidad Aut\'{o}noma de Madrid,
    	28049 Madrid, Spain}

%\author{Gengsheng Wang\thanks{Center for Applied Mathematics, Tianjin University,
%		Tianjin, 300072, China. \email{wanggs62@yeah.net}}
%\and
%Yubiao Zhang\thanks{Center for Applied Mathematics, Tianjin University, Tianjin, 300072, China; \email{yubiao\b{ }zhang@yeah.net}}
%\and
%Enrique Zuazua\thanks{
%	[1] Chair for Dynamics, Control, Machine Learning  and Numerics, Alexander von Humboldt-Professorship, Department of Mathematics,  Friedrich-Alexander-Universit\"at Erlangen-N\"urnberg,
%	91058 Erlangen, Germany (\email{\texttt{enrique.zuazua@fau.de}}),
%	\newline \indent \indent
%	[2] Chair of Computational Mathematics, Fundaci\'{o}n Deusto,
%	Av. de las Universidades, 24,
%	48007 Bilbao, Basque Country, Spain,
%	\newline \indent \indent
%	[3] Departamento de Matem\'{a}ticas,
%	Universidad Aut\'{o}noma de Madrid,
%	28049 Madrid, Spain
%}
%}

\date{}

\maketitle

\subjclass{
93B07 % Observability
93B05 % Controllability
45K05 % Integro-partial differential equations
%35K05 % Heat equations
%58J47 % Propagation of singularities; initial value problems on manifolds
}

\begin{abstract}
This paper presents a complete analysis of the observability property of heat equations with time-dependent real analytic memory kernels.
More precisely, we characterize the geometry  of the space-time measurable observation sets  ensuring sharp observability inequalities, which are relevant both for control and inverse problems purposes.

Despite the abundant literature on the observation of heat-like equations, existing methods do not apply to models involving memory terms.

 We present a new methodology and observation strategy, relying
 on the decomposition of the flow, the time-analyticity  of solutions and the propagation of singularities.
This allows us to obtain a sufficient and necessary geometric condition on the measurable observation sets for sharp two-sided observability inequalities.  In addition,  some applications to control and relevant open problems are presented. 
\end{abstract}

\keywords{Heat equations with memory, analytic kernels, observability inequalities, flow decomposition, propagation of singularities,
geometric characterization of observation sets.
}

%\tableofcontents

\section{Introduction}

\subsection{Equation and aim}
 Let  $\Omega\subset\mathbb R^n$ ($n\in \mathbb N^+:=\{1,2,3,\cdots\}$)  be a bounded domain with a $C^2$-boundary $\partial\Omega$ and consider  the   heat equation with  memory:
\begin{eqnarray}\label{our-obserble-system}
\left\{
\begin{array}{ll}
\partial_t y(t,x)  - \Delta y(t,x)
+ \displaystyle\int_0^t M(t-s) y(s,x)ds =0,
~&(t,x)\in \mathbb R^+ \times \Omega,\\
y(t,x)=0,~&(t,x)\in \mathbb R^+\times \partial\Omega,  \\
y(0,\cdot)=y_0(\cdot) \in L^2(\Omega),
\end{array}
\right.
\end{eqnarray}
where
 $\mathbb R^+:=(0,+\infty)$ and $M$ satisfies the following assumption:
\begin{itemize}[leftmargin=4em]
  \item[($\mathfrak H$)] 
 The time-dependent memory kernel $M$ is a non-trivial real analytic  function over $[0,+\infty)$.
\end{itemize}
Equation \eqref{our-obserble-system} has a unique solution,
denoted by  $y(\cdot,\cdot;y_0)$,
 in $C([0,+\infty);L^2(\Omega))$.

 Equations with memory have been the object of intensive study since the early works of  Maxwell  \cite{Maxwell}, Boltzmann \cite{Boltzmann-1,Boltzmann-2}, and Volterra \cite{Volterra-1,Volterra-2}
and the literature on the topic is broad (e.g., \cite{Amendola-Fabriio-Golden,Cattaneo, Christensen,Coleman-Gurtin,
Dafermos,Farbizio-Gogi-Pata,Gurtin-Pipkin, OP, Pandolfi-book, WZZ-1,  Yong-Zhang}
and the references therein).

The observability problem  we analyze in this paper is relevant in the context of inverse problems and control. Roughly it aims to recover the full information on solutions out of partial measurements.

The observability problem has been intensively investigated for  the pure heat equation, corresponding to the null memory kernel (i.e.,  \eqref{our-obserble-system} with $M=0$), the main goal being to
  recover full information about the solution at the final time $T>0$
 out of   its
  restriction to
  a space-time open subdomain or measurable set
   (see, for instance,
 \cite{Luis-wang, Fernandez-Cara-Zuazua, Fursikov-Imanuvilov, Lebeau-Zuazua, Lin, Phung-Wang, Zuazua}).

  Compared with the pure heat equation, the study on the observability for
    heat equations with memory
   is lagging behind, despite some
 key works    (\cite{Barbu-Iannelli, Chaves-Silva-Zhang-Zuazua,  Fu-Yong-Zhang,
    Guerrero-Imanuvilov, Halanay-Pandolfi,    Pandolfi-book, Zhou-Gao}
     and the references therein).
  In \cite{Zhou-Gao} (see also \cite{Guerrero-Imanuvilov, Halanay-Pandolfi} for the case of boundary observation and control), the
authors found that when the observation set is time-invariant (of the form $(0,T)\times\omega$ for some open subset  $\omega\subset\Omega$) and proper,
  the null observability inequality
  for heat equations with memory does not hold.
    As a remedy, in  \cite{Chaves-Silva-Zhang-Zuazua}, the authors
    proposed a moving or    time-varying  observation mechanism to ensure
    the observability property. However,
              the necessity  of the aforementioned  geometric condition was not shown.
%>>>>>>> New version on 23-10-2023

Observability and controllability problems have also been considered for other PDEs involving memory terms. See, for example, \cite{Loreti-Sforza, Lu-Zhang-Zuazua}  for wave equations and \cite{Fernandez-Machado-Souza} for Stokes equations with memory, respectively. Here we focus on the heat equation with memory \eqref{our-obserble-system}.

%<<<<<<<

This paper is devoted to  proving a necessary and sufficient geometric condition on
 measurable subsets $Q\subset \mathbb R^+ \times \Omega$
 guaranteeing  the observability of  equation  \eqref{our-obserble-system}, so that  the restriction
 to $Q$ suffices to fully recover the solution.
   The  geometry of the observation sets
  and the specific form of the  observability inequalities studied in the current paper are motivated by  the following property $(\mathcal P)$ of the dynamics :
   \begin{itemize}[leftmargin=4em]
    \item[$(\mathcal{P})$]\label{property-P} The singularities of the solutions of equation \eqref{our-obserble-system} propagate
        along the $t$-axis as follows:
          \begin{itemize}
            \item[$\bullet$] For $t>0$,
       singularities propagate without any change on the regularity order.
          \item[$\bullet$] The backward propagation of singularities to the initial data results in the loss of $4$ space-derivatives.
    \end{itemize}
     \end{itemize}
 This   property   exhibits and reflects both, the singularity propagation properties  for $t>0$, proper to hyperbolic systems, but also
     a boundary layer at $t=0$, which does not occur  in the hyperbolic setting, being rather of a parabolic nature. Note however that for classical heat-like equations, the backward propagation of singularities to the initial data results in the loss of infinitely many space-derivatives, while for our model the loss is exactly $4$, and this  is true for all non-trivial analytic memory kernels.

     These two fundamental aspects of the dynamics under consideration,   constitute  a manifestation of the hybrid  heat-wave nature of equation \eqref{our-obserble-system}.
The numerical simulations in Section \ref{202310-yb-SimpleExplanationForDecomposition}  confirm and provide computational evidence of this hybrid character of the dynamics.

  Property $(\mathcal{P})$ is induced by  the decomposition of the flow
              generated by equation \eqref{our-obserble-system}, recently developed
     in \cite[Theorem 1.1]{WZZ-1} (and recalled in Theorem \ref{cor-0423-demcomposition}  in the Appendix at the end of this paper for the sake of completeness). It states that
               the flow
   \begin{align}\label{varPhi-y-y0}
\varPhi(t)y_0:=y(t,\cdot;y_0) \in L^2(\Omega),~ y_0 \in L^2(\Omega),~t\geq 0
\end{align}
 can be decomposed into  three components: a heat-like one $\mathcal P_N(t)$, a wave-like one $\mathcal W_N(t)$, and a  remainder  $\mathfrak R_N(t)$.

\subsection{Main results}  
We start by introducing the following  definitions:
  \begin{itemize}[leftmargin=4em]
\item[(D1)]  (\textit{Space $\mathcal H^s$}) The Laplacian is denoted by
 \begin{eqnarray}\label{selfadjoit-elliptic-operator}
A f := \Delta f,
\;\;\mbox{with its domain}\;\;
D(A):=H^2(\Omega)\cap H_0^1(\Omega).
\end{eqnarray}
 Let $\eta_j$  be the $j^{th}$ eigenvalue of $-A$ and $e_j$
be the corresponding normalized eigenfunction in $L^2(\Omega)$.
  For each $s\in \mathbb{R}$, we define the real Hilbert space
\begin{eqnarray}\label{def-space-with-boundary-condition}
\mathcal H^s :=
\bigg\{f= \sum_{j=1}^\infty a_j e_j ~:~ (a_j)_{j\geq 1}\subset \mathbb R,~
\sum_{j=1}^\infty |a_j|^2 \eta_j^{s}<+\infty
\bigg\},
\end{eqnarray}
equipped with the inner product
\begin{eqnarray}\label{def-Hs-space-norm}
\langle f, g\rangle_{\mathcal H^s}   :=
\sum_{j=1}^\infty a_j b_j \eta_j^{s},\;
\;\;\;\;
f= \sum_{j=1}^\infty a_j e_j \in \mathcal H^s
\;\;\mbox{and}\;\;
g= \sum_{j=1}^\infty b_j e_j \in \mathcal H^s,
\end{eqnarray}
  where  $(a_j)_{j\geq 1}, (b_j)_{j\geq 1}\subset \mathbb R$.

  \item[(D2)]   (\textit{Moving observation  condition}, MOC for short)
The triplet $(Q,S,T)$, where  $T>S\geq 0$ and $Q$ is a nonempty measurable subset of
 $\mathbb R^+ \times \Omega$,
   is said to satisfy the MOC if
  \begin{align}\label{Q-S-T-bounded-from-below}
    \mathcal T_{\Omega}(Q,S,T)
    :=\underset{x\in\Omega}{\text{ess-inf}} \int_S^T \chi_Q(t,x) dt
    >0
  \end{align}
  (here and below, for each set $E$,  $\chi_E$ denotes its  characteristic function).

 \end{itemize}

  \begin{remark}\label{remark1.1,2-22}
    In the definition (D2), the following points should be noted:
    \begin{itemize}[leftmargin=4em]
        \item[$(i)$]    The  MOC condition is  natural in view of  the wave-like nature of the flow  and, more precisely, is dictated by the propagation of singularities in the  $t$-direction as described in the aforementioned property $(\mathcal P)$.
       Indeed, for each $x\in \Omega$, the slice
        $Q_x :=\{(t,x)\in Q : t\in [S,T]\}$ lies on the half-line
        $\{(t,x) : t\geq 0\}$, which is
        the characteristic line  issued from $x$ at time $t=0$ (see
        \cite[$(iv)$ and $(ii)$ of Theorem 1.2]{WZZ-1}, where  the propagation of singularities was introduced).
        We write
        \begin{eqnarray*}%\label{1.6-3-18}
            \mathcal{T}_{x,S,T}:=|Q_x|=\int_S^T \chi_Q(t,x) dt,
        \end{eqnarray*}
        where
        $|Q_x|$ denotes the measure of $Q_x$ in  $\mathbb{R}$.
        Then, $\mathcal{T}_{x,S,T}$
        can be viewed  as
        the cumulative time that the characteristic line spends in the
        observation set $Q$
        during the time period $[S,T]$. Consequently, \eqref{Q-S-T-bounded-from-below}
        means that the  cumulative time $\mathcal{T}_{x,S,T}$  has  a uniform positive lower bound
        with respect to a.e. $x\in\Omega$. This guarantees that all propagating singularities of the solutions (to equation \eqref{our-obserble-system}) spend an uniform percentage of time in the observation set $Q$.

\item[$(ii)$]
The  MOC was introduced in \cite[Assumption 4.1]{Chaves-Silva-Zhang-Zuazua}
    (see also \cite{Chaves-Silva-Rosier-Zuazua}).  But in these works some extra technical assumptions on $Q$ were imposed for the employment of Carleman inequalities.
       In the most general definition of the MOC considered here, $Q$ can be a measurable set, while in the definition of
\cite[Assumption 4.1]{Chaves-Silva-Zhang-Zuazua},
 $Q$ was an open set.
Our MOC
 is strictly weaker than
\cite[Assumption 4.1]{Chaves-Silva-Zhang-Zuazua} (see Example \ref{example-mcc}  for a concrete example of the MOC), because of the absence of any other geometric technical conditions, that in \cite{Chaves-Silva-Zhang-Zuazua} were motivated by the use of Carleman inequalities.

 \end{itemize}

    \end{remark}

\vskip 5pt
  The main result of this paper is as follows.
  \begin{theorem}\label{202205TJyb-MainTheorem-merged}
  Let $M$ be a nonzero real analytic function over $[0,+\infty)$.
%%>>>>>>> Old version 10-11-2022
%Let $M$ be a n analytic kernel as in $\mathfrak H$.
%%<<<<<<<
   Let $T>S> 0$ and $Q$ be a nonempty measurable subset of
  $(0,+\infty) \times \Omega$.
  The following two statements are equivalent:
  \begin{itemize}
      \item [(i)] The triplet $(Q,S,T)$ satisfies the MOC.

      \item [(ii)]  There is a constant $C>0$ such that
      \begin{align}\label{202205TJYB-TwoSideObservability}
          \frac{1}{C} \| y_0\|_{ \mathcal H^{-4} }
          \leq  \int_S^T  \big\| \chi_Q(t,\cdot) y(t,\cdot;y_0) \big\|_{L^2(\Omega)}   dt
          \leq  C \| y_0\|_{ \mathcal H^{-4} }
      \end{align}
      for every solution of equation \eqref{our-obserble-system} with $y_0 \in L^2(\Omega)$.
  \end{itemize}

  \end{theorem}

\begin{remark}%\label{remark14.-4-8}
The following comments are worthy of consideration:
\begin{itemize}[leftmargin=4em]
    \item[(a1)]  Theorem \ref{202205TJyb-MainTheorem-merged} guarantees that
     when $S>0$,
     the MOC satisfied by $(Q,S,T)$ is a sufficient and necessary condition to ensure
     the  two-sided observability inequality  \eqref{202205TJYB-TwoSideObservability}.
%     Because the MOC is led by the property $(\mathcal P)$ (see $(i)$ of Remark \ref{remark1.1,2-22}), from the equivalence in Theorem \ref{202205TJyb-MainTheorem-merged}, the  two-sided observability inequality \eqref{202205TJYB-TwoSideObservability} is also led by  the property $(\mathcal P)$.

               ~~The first inequality in \eqref{202205TJYB-TwoSideObservability}  guarantees that, by measuring a solution of equation \eqref{our-obserble-system}
    over $Q$ in the space $L^1(S,T;L^2(\Omega))$, one can recover its initial datum in  $ \mathcal H^{-4}$, while the second inequality in \eqref{202205TJYB-TwoSideObservability} shows its optimality.

 \item[(a2)]  By density, the inequalities in \eqref{202205TJYB-TwoSideObservability} hold for all $y_0\in \mathcal H^{-4}$.  In fact, the regularity estimate in \cite[Theorem 1.4]{WZZ-1} states that when $T>S>0$, there is a $C>0$ such that
 \begin{align*}%\label{20221110-tjYB-regularity}
     \| y(\cdot,\cdot;y_0)  \|_{C([S,T];L^2(\Omega))}
     \leq  C  \| y_0  \|_{\mathcal H^{-4}},
     ~\forall\, y_0  \in \mathcal H^{-4}.
 \end{align*}
%>>>>>>> New version on 10-11-2023
Moreover, the fact that the norm $\mathcal H^{-4}$ of the initial datum is the one observed is natural in view of the discussion above and the emergence of singularities of order $4$ when solving the dynamics backwards in time at $t=0$. In fact, away from $t=0$ the dynamics can be decomposed as
\begin{equation}\label{202310-yb-SimpliedKeyEquality}
    y(t,\cdot;y_0) = - M(t) A^{-2} y_0 +
        \text{``small terms"},~~0<S\leq t \leq T
\end{equation}
(see \eqref{202310-yb-RewriteSimpliedKeyEquality}  in the later section, for a rigorous analysis).
    \item[(a3)]     To ensure  Theorem \ref{202205TJyb-MainTheorem-merged},
    it is necessary that $S>0$ (see
      Theorem \ref{202210-thm-weighted-observability}
for the case $S=0$). The treatment of the more delicate case $S=0$
     requires involving the weight function $t^\alpha$ (with $\alpha>1$) into the integrand
      in \eqref{202205TJYB-TwoSideObservability}  (see also
      Theorem \ref{202210-thm-weighted-observability}).

  \item[(a4)]
The proof of  Theorem \ref{202205TJyb-MainTheorem-merged} employs the wave-like aspects of the dynamics \eqref{our-obserble-system}. Inspired by the observability for wave equations (see for instance \cite{BLR92,RauchTaylor}), we develop the following modified three-step strategy to prove Theorem \ref{202205TJyb-MainTheorem-merged}:
\begin{itemize}[leftmargin=4em]
    \item[Step 1.] We establish a relaxed observability inequality (see Lemma \ref{lem-relaxed-exact-ob}), based on the simplified equality \eqref{202310-yb-SimpliedKeyEquality}, which is a consequence of the decomposition in Theorem \ref{cor-0423-demcomposition},  and especially of the wave-like component.
    \item[Step 2.] We obtain
    a qualitative unique continuation property for solutions of  equation \eqref{our-obserble-system} (see Lemma \ref{lem-unique-continuation-for-low-frequence}). Our proof  relies on the time-analyticity property  (stated in Proposition \ref{pro-analyticity-pointwise}) of the solutions of equation \eqref{our-obserble-system}.
    \item[Step 3.] We conclude the exact observability inequality \eqref{exact-ob-twosides-by-yb-202110},  by means of the classical compactness-uniqueness argument.
\end{itemize}

  \item[(a5)]   Theorem \ref{202205TJyb-MainTheorem-merged} remains true, as can be proven by similar arguments,  when the norm in $L^1_tL^2_x$ in \eqref{202205TJYB-TwoSideObservability} is replaced by that in $L^p_tL^2_x$ (with $1<p \leq +\infty$),
 i.e., when $T>S>0$,  the triplet  $(Q,S,T)$ satisfies the MOC if and only if
   there is a  $C>0$ such that
      \begin{align*}
          \frac{1}{C} \| y_0\|_{ \mathcal H^{-4} }
          \leq
          \| \chi_Q y(\cdot,\cdot;y_0) \|_{ L^p(S,T;L^2(\Omega)) }
          \leq  C \| y_0\|_{ \mathcal H^{-4} }
          ~~~\mbox{for all}~ y_0 \in L^2(\Omega).
      \end{align*}
  \item[(a6)] In Section \ref{sec-further-studies}, we present some further developments on Theorem \ref{202205TJyb-MainTheorem-merged}, and in Section \ref{sec-controllability}, we discuss some applications of Theorem \ref{202205TJyb-MainTheorem-merged}
  to  the control of system \eqref{our-obserble-system}.
\end{itemize}
\end{remark}

\vskip 5pt

\subsection{Discussion on the contributions}
The  main contributions  of the results of this paper are as follows:
  \begin{itemize}[leftmargin=4em]
    \item[(b1)]
        The two-sided observability inequality \eqref{202205TJYB-TwoSideObservability}
    and  the necessary and sufficient MOC condition on
        $(Q,S,T)$.

    \item[(b2)]    The optimality and minimality  of our MOC condition,  involving measurable (not necessarily open) observation sets.

    \item[(b3)] The methods  in this paper themselves are also new and can be of independent use to tackle other problems related with parabolic memory models and in particular inverse problems and long time asymptotics.  \end{itemize}

\subsection{Organization of the paper}
 The rest of this paper is organized as follows.
 In Section \ref{sec-preliminaries} we present some preliminary results.
%provides some preliminaries.
In Section \ref{sec-main-proofs},  Theorem \ref{202205TJyb-MainTheorem-merged} is proven.
Section \ref{sec-further-studies} presents some extensions of Theorem \ref{202205TJyb-MainTheorem-merged}.
Section \ref{sec-controllability} shows some applications in control.
Section \ref{202310-yb-SimpleExplanationForDecomposition} provides numerical simulations for the hybrid heat-wave nature of equation \eqref{our-obserble-system}.
Section \ref{sec-open-problems} is devoted to discussing several open problems.
Section \ref{sec-appendix} is devoted to the technical Appendix.

\section {Preliminaries}\label{sec-preliminaries}

\subsection{Properties of the flow}

% Recall that we have included
% \cite[Theorem 1.1]{WZZ-1} in the Appendix of this paper (see Theorem \ref{cor-0423-demcomposition}).
%It deserves mentioning that by
According to Theorem \ref{cor-0423-demcomposition} in the Appendix of this paper (see also \cite[Theorem 1.1]{WZZ-1}), we know
that
for each $t\geq 0$, the operator  $\varPhi(t)$ (given in \eqref{varPhi-y-y0}) constitutes
an element of the space $\mathcal{L}(\mathcal H^s)$ for any $s\in \mathbb{R}$.

In this subsection, we present some properties of the flow $\varPhi(t)$,  which are consequences of
Theorem \ref{cor-0423-demcomposition}. %, and that will be used in the proofs of our main results.

 \begin{proposition}\label{cor-heat-wave-N=2-by-yb-20211126}
		There is an $\mathcal R_c \in
	C\big(\mathbb R^+; C(\mathbb R^+)  \big)$ such that
	\begin{eqnarray}\label{demcomposition-N=2-PW}
	\varPhi(t)= e^{tA}\Big(1- tM(0) A^{-1} + M(0) A^{-2} \Big)
-M(t)  A^{-2} + \mathcal R_c(t,-A) A^{-3},\;\;t>0,
	\end{eqnarray}
	and such that  for each $s\in \mathbb R$, $\mathcal R_c(\cdot,-A)$ belongs to $C(\mathbb R^+;\mathcal L( \mathcal H^s))$ and satisfies
	\begin{eqnarray}\label{R1R2-regularity-by-yb-20211126}
 	\big\| \mathcal R_c(t,-A) \|_{
		\mathcal L( \mathcal H^s) }
	\leq
	\exp\Big(  2(1+t) \big(1+\|M\|_{ C^{2}([0,t])} \big)  \Big),~~t>0.
	\end{eqnarray}
\end{proposition}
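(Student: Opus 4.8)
The plan is to obtain \eqref{demcomposition-N=2-PW}--\eqref{R1R2-regularity-by-yb-20211126} as a specialization of the decomposition theorem, Theorem \ref{cor-0423-demcomposition}, followed by a regrouping of all lower-order contributions into a single operator. First I would apply Theorem \ref{cor-0423-demcomposition} with $N=2$, which writes $\varPhi(t)=\mathcal P_2(t)+\mathcal W_2(t)+\mathfrak R_2(t)$ for $t>0$, where the heat-like part $\mathcal P_2(t)$ is $e^{tA}$ composed with an explicit polynomial in $t$ and $A^{-1}$ truncated at order $A^{-2}$, the wave-like part $\mathcal W_2(t)$ has leading term $-M(t)A^{-2}$ together with a further explicit term of order $A^{-3}$ built from $M,M',M''$, and the remainder satisfies $\mathfrak R_2(t)A^{3}\in\mathcal L(\mathcal H^s)$ for every $s\in\mathbb R$ with a quantitative bound. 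Reading off the coefficients of $\mathcal P_2$ — recalling that $A=\Delta$, so that $A^{-1}$ is negative definite — gives $\mathcal P_2(t)=e^{tA}\big(1-tM(0)A^{-1}+M(0)A^{-2}\big)$.

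Then I would simply \emph{define}
\[
\mathcal R_c(t,-A):=\Big(\varPhi(t)-e^{tA}\big(1-tM(0)A^{-1}+M(0)A^{-2}\big)+M(t)A^{-2}\Big)A^{3},\qquad t>0 ,
\]
so that \eqref{demcomposition-N=2-PW} holds by construction. By the previous step the operator in parentheses equals the order-$A^{-3}$ tail of $\mathcal W_2(t)$ plus $\mathfrak R_2(t)$, both of the form (bounded operator)$\circ A^{-3}$, hence $\mathcal R_c(t,-A)\in\mathcal L(\mathcal H^s)$ for each $s\in\mathbb R$. Since the memory term decouples the eigenmodes — the $j$-th mode solves $a_j'+\eta_j a_j+\int_0^t M(t-s)a_j(s)\,ds=0$ — the operators $\varPhi(t)$, $e^{tA}$ and all powers of $A$ are diagonal in $\{e_j\}$, and so is $\mathcal R_c(t,-A)$; it is therefore the functional calculus of the scalar symbol $\mathcal R_c(t,\lambda)$ obtained from the displayed formula by substituting $e^{tA}\mapsto e^{-t\lambda}$, $A^{-k}\mapsto(-\lambda)^{-k}$ and evaluating at $\lambda=\eta_j$. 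Continuity of $\lambda\mapsto\mathcal R_c(t,\lambda)$ on $\mathbb R^+$ follows from the (in fact analytic) dependence on $\lambda$ of the scalar flow and of all the explicit terms, and continuity of $t\mapsto\mathcal R_c(t,-A)$ in $\mathcal L(\mathcal H^s)$ is inherited summand by summand; together these give $\mathcal R_c\in C(\mathbb R^+;C(\mathbb R^+))$.

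For the bound \eqref{R1R2-regularity-by-yb-20211126}, diagonality gives $\|\mathcal R_c(t,-A)\|_{\mathcal L(\mathcal H^s)}=\sup_{j\ge1}|\mathcal R_c(t,\eta_j)|$, which explains why the estimate is independent of $s$; it then suffices to bound $|\mathcal R_c(t,\lambda)|$ uniformly for $\lambda\ge\eta_1>0$. This I would get by combining the quantitative remainder estimate of Theorem \ref{cor-0423-demcomposition} with an elementary bound on the single explicit order-$A^{-3}$ wave term (which costs a factor $\|M\|_{C^2([0,t])}$ and is $O(\lambda^{-3})$, hence harmless after multiplication by $A^{3}$), and by summing the Picard iteration generating the decomposition, whose $k$-th term carries a smallness factor of type $(t\,\|M\|_{C^0([0,t])})^k/k!$; the resulting series collapses to an exponential of the claimed shape $\exp\big(2(1+t)(1+\|M\|_{C^2([0,t])})\big)$.

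I expect this last estimate — arranging the constants so that the uniform-in-$\lambda$ bound takes exactly that exponential form — to be the only genuine friction point; the rest is a translation of Theorem \ref{cor-0423-demcomposition} into the present notation. If that theorem already records its remainder bound in this exponential form, the estimate reduces to adding in the one explicit wave term, and nothing beyond elementary manipulation remains.
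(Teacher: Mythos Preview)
Your overall strategy --- specialize Theorem \ref{cor-0423-demcomposition} to $N=2$, peel off the leading heat and wave pieces, and define $\mathcal R_c$ as whatever is left times $A^3$ --- is exactly what the paper does. But your identification of the pieces is wrong in a way that produces a genuine gap in the boundedness argument.

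You assert $\mathcal P_2(t)=e^{tA}\big(1-tM(0)A^{-1}+M(0)A^{-2}\big)$ and that the surplus sits in $\mathcal W_2$ at order $A^{-3}$. Neither is correct. From \eqref{thm-ODE-meomery-asymptotic-estimate-hypobolic} one computes $p_0(t)=M(0)t$, $p_1(t)=M(0)-M'(0)t+\tfrac12 M(0)^2t^2$, $h_0\equiv0$, $h_1(t)=-M(t)$, so
\[
\mathcal P_2(t)=e^{tA}\Big(1-tM(0)A^{-1}+\big(M(0)-M'(0)t+\tfrac12 M(0)^2t^2\big)A^{-2}\Big),\qquad \mathcal W_2(t)=-M(t)A^{-2},
\]
and $\mathcal W_2$ has \emph{no} $A^{-3}$ tail. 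Consequently, after subtracting the target expression in \eqref{demcomposition-N=2-PW} from $\varPhi(t)$ you are left with
\[
e^{tA}\big(-M'(0)t+\tfrac12 M(0)^2t^2\big)A^{-2}\;-\;R_2(t,-A)A^{-3},
\]
not something of the form (bounded)$\circ A^{-3}$. Multiplying by $A^3$ gives
\[
\mathcal R_c(t,-A)=e^{tA}\big(-M'(0)t+\tfrac12 M(0)^2t^2\big)A-R_2(t,-A),
\]
and the first summand carries a bare factor of $A$, which is unbounded on every $\mathcal H^s$. The ingredient you are missing is the smoothing estimate $\|Ae^{tA}\|_{\mathcal L(\mathcal H^s)}\le t^{-1}$ for $t>0$: it absorbs that $A$ at the cost of a $t^{-1}$, which the polynomial prefactor then cancels. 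With this in hand, \eqref{R1R2-regularity-by-yb-20211126} follows directly from \eqref{RN-property-regularity} with $N=2$ --- the exponential form is already recorded there, so your proposed Picard resummation is unnecessary.
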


\begin{proof}
Apply Theorem \ref{cor-0423-demcomposition} (with $N=2$)  to get 
	\begin{align}\label{4.23,7.25}
	\varPhi(t)= e^{tA}  + e^{tA}\Big( - p_0(t) A^{-1} + p_1(t) A^{-2} \Big)
+  h_1(t) A^{-2} 	- R_2(t,-A) A^{-3},\;\;t>0.
	\end{align}
	At the same time, it follows from \eqref{thm-ODE-meomery-asymptotic-estimate-hypobolic} that for each $t>0$,
	\begin{align}\label{4.23,7.25,11:33}
	\begin{cases}
	p_0(t)= M(0 )t, 
	   \\
	p_1(t)= M(0) - M^\prime(0)  t + \frac{1}{2} M(0)^2 t^2,
        \\
	h_1(t)=-M(t).
	\end{cases}
	\end{align}
 From equations \eqref{4.23,7.25} and \eqref{4.23,7.25,11:33}, we find
	\begin{align}\label{del-cor4.6-1}
	\varPhi(t)=& e^{tA}\Big(1- M(0) tA^{-1} + M(0) A^{-2} \Big) -M(t) A^{-2}
	\nonumber\\
	&
+e^{tA} \Big(
	 - M^\prime(0)  t + \frac{1}{2} M(0)^2 t^2
	\Big) A^{-2} -  R_2(t,-A) A^{-3}
	,\; t>0.
	\end{align}

	Next, we define
\begin{align*}
\mathcal R_c(t,\tau) := e^{-t\tau} \Big(
	 - M^\prime(0)  t + \frac{1}{2} M(0)^2 t^2
	\Big) (-\tau) -  R_2(t,\tau),\;\;t>0,~ \tau>0.
\end{align*}
Then, by spectral functional calculus, we have
	\begin{align}\label{4.26,7.25}
	\mathcal R_c(t,-A) = e^{tA} \Big(
	 - M^\prime(0)  t + \frac{1}{2} M(0)^2 t^2
	\Big) A -  R_2(t,-A),\;\;t>0.
	\end{align}
	At the same time, we can directly check that  for each  $s\in \mathbb R$,
	\begin{align}\label{4.27,7.25}
\| A e^{tA} \|_{\mathcal L( \mathcal H^s )}
	\leq  t^{-1}
	,~~
	t>0.
		\end{align}
	Now, \eqref{demcomposition-N=2-PW} follows from \eqref{del-cor4.6-1}
	and \eqref{4.26,7.25}, while \eqref{R1R2-regularity-by-yb-20211126} follows from \eqref{4.26,7.25},
	 \eqref{4.27,7.25}, and \eqref{RN-property-regularity} (where $N=2$).
	This completes the proof.
\end{proof}

The proofs of the following two corollaries are presented in Subsection \ref{Appendix-SeveralProofs} in the Appendix.

\begin{corollary}\label{cor-only-h1}
	Let $\beta \in [2,3]$.  Then, there is an $\widehat{\mathcal R}_c \in
	C\big(\mathbb R^+; C(\mathbb R^+)  \big)$ such that
	\begin{eqnarray}\label{demcomposition-N=2-only-h1}
	\varPhi(t)=-M(t)  A^{-2} + \widehat{\mathcal R}_c(t,-A) (-A)^{-\beta},\;\;t>0.
	\end{eqnarray}
	Moreover,  for each $s\in \mathbb R$, $\widehat{\mathcal R}_c(\cdot,-A)$ belongs to $C(\mathbb R^+;\mathcal L( \mathcal H^s))$ and satisfies for some $C>0$,
	\begin{eqnarray}\label{R1R2-regularity-only-h1}
	\big\| \widehat{\mathcal R}_c(t,-A) \|_{
		\mathcal L( \mathcal H^s) }
	\leq   \frac{C}{ t^{\beta} }
	\exp\Big(  2(1+t) \big( 1+\|M\|_{ C^{2}([0,t]) } \big) \Big),~~t>0.
	\end{eqnarray}
\end{corollary}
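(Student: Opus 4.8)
The plan is to obtain both \eqref{demcomposition-N=2-only-h1} and \eqref{R1R2-regularity-only-h1} directly from the $N=2$ decomposition \eqref{demcomposition-N=2-PW} of Proposition~\ref{cor-heat-wave-N=2-by-yb-20211126}, by ``repackaging'' its heat--like block and its order $-3$ remainder onto the single scale $(-A)^{-\beta}$. Recall that $-A$ is positive with eigenvalues $\eta_j>0$, so the fractional powers $(-A)^{\gamma}$ ($\gamma\in\mathbb R$) are available, $(-A)^{\gamma}e_j=\eta_j^{\gamma}e_j$, and one has $A^{-1}=-(-A)^{-1}$, $A^{-2}=(-A)^{-2}$, $A^{-3}=-(-A)^{-3}$, $e^{tA}=e^{-t(-A)}$. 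For $\beta\in[2,3]$, pulling $(-A)^{-\beta}$ out of the heat block $e^{tA}\big(1-tM(0)A^{-1}+M(0)A^{-2}\big)$ leaves the \emph{positive} powers $(-A)^{\beta},(-A)^{\beta-1},(-A)^{\beta-2}$ (all exponents in $[0,3]$) multiplied by $e^{tA}$, while pulling it out of $\mathcal R_c(t,-A)A^{-3}$ leaves $(-A)^{\beta-3}$, a \emph{bounded} operator on every $\mathcal H^s$ since $\beta-3\le0$.

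Concretely, I would set, for $t>0$ and $\tau>0$,
\begin{align*}
\widehat{\mathcal R}_c(t,\tau):=e^{-t\tau}\Big(\tau^{\beta}+tM(0)\,\tau^{\beta-1}+M(0)\,\tau^{\beta-2}\Big)-\mathcal R_c(t,\tau)\,\tau^{\beta-3},
\end{align*}
and check, by the spectral functional calculus for $-A$, that $\widehat{\mathcal R}_c(t,-A)(-A)^{-\beta}$ coincides with $e^{tA}\big(1-tM(0)A^{-1}+M(0)A^{-2}\big)+\mathcal R_c(t,-A)A^{-3}$; substituting this into \eqref{demcomposition-N=2-PW} gives exactly \eqref{demcomposition-N=2-only-h1}. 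The continuity $\widehat{\mathcal R}_c\in C\big(\mathbb R^+;C(\mathbb R^+)\big)$ is inherited from that of $\mathcal R_c$ together with the joint continuity of the explicit prefactors in $(t,\tau)\in\mathbb R^+\times\mathbb R^+$.

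For \eqref{R1R2-regularity-only-h1}, I would use that for a spectral multiplier $\big\|\widehat{\mathcal R}_c(t,-A)\big\|_{\mathcal L(\mathcal H^s)}=\sup_{j\ge1}\big|\widehat{\mathcal R}_c(t,\eta_j)\big|$, a quantity independent of $s$, and then estimate $\widehat{\mathcal R}_c(t,\eta_j)$ term by term. The elementary inequality $\sup_{\lambda>0}\lambda^{\gamma}e^{-t\lambda}=(\gamma/e)^{\gamma}t^{-\gamma}$ for $\gamma\ge0$ (the case $\gamma=1$ being \eqref{4.27,7.25}), applied with $\gamma\in\{\beta,\beta-1,\beta-2\}$, bounds the heat--like contribution by $C_\beta\,t^{-\beta}\big(1+t|M(0)|+|M(0)|\big)$; the bound $\sup_{j\ge1}\eta_j^{\beta-3}=\eta_1^{\beta-3}$ together with \eqref{R1R2-regularity-by-yb-20211126} bounds the remaining contribution by $\eta_1^{\beta-3}\exp\big(2(1+t)(1+\|M\|_{C^2([0,t])})\big)$. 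Absorbing the polynomial factors in $t$ and the factor $|M(0)|\le\|M\|_{C^2([0,t])}$ into the exponential (via $t^{k}\le e^{kt}$, $1+x\le e^{x}$, and $t^{-\beta}\ge1$ for $0<t\le1$) then yields a bound of the form \eqref{R1R2-regularity-only-h1}, the $t^{-\beta}$ being the genuine rate of the heat--like block as $t\to0^{+}$.

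I do not expect a substantial obstacle: all the content is already in Proposition~\ref{cor-heat-wave-N=2-by-yb-20211126}, and the corollary is merely a change of functional scale. The only things to be careful about are the sign/exponent bookkeeping for the fractional powers — in particular, the order $-3$ remainder, being \emph{more} smoothing than the target $(-A)^{-\beta}$, has to be written as $(\text{bounded operator})\cdot(-A)^{-\beta}$ and never the other way around — and the fact that it is the composition of the smoothing semigroup $e^{tA}$ with the positive power $(-A)^{\beta}$ that produces the unavoidable $t^{-\beta}$ singularity in \eqref{R1R2-regularity-only-h1}.
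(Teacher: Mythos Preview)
Your proposal is correct and follows essentially the same route as the paper: you define the same symbol $\widehat{\mathcal R}_c(t,\tau)=e^{-t\tau}\big(\tau^{\beta}+tM(0)\tau^{\beta-1}+M(0)\tau^{\beta-2}\big)-\mathcal R_c(t,\tau)\tau^{\beta-3}$ (the paper writes it as $e^{-t\tau}\big(1+M(0)t\tau^{-1}+M(0)\tau^{-2}\big)\tau^{\beta}-\mathcal R_c(t,\tau)\tau^{\beta-3}$, which is identical), deduce \eqref{demcomposition-N=2-only-h1} from \eqref{demcomposition-N=2-PW}, and obtain \eqref{R1R2-regularity-only-h1} from the elementary multiplier bound $\sup_{\lambda>0}\lambda^{\gamma}e^{-t\lambda}\le C_\gamma t^{-\gamma}$ together with \eqref{R1R2-regularity-by-yb-20211126}. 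Your extra care with the sign conventions for $(-A)^{\gamma}$ and with absorbing the polynomial prefactors into the exponential is appropriate and matches what the paper leaves implicit.
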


\begin{corollary}\label{cor-only-parabolic}
	There is an $\widetilde{\mathcal{R}}_c\in C\big(\mathbb R^+; C(\mathbb R^+)  \big)$
	such that
	\begin{eqnarray}\label{demcomposition-N=2-only-parabolic}
	\varPhi(t) = e^{tA}  +  \widetilde{\mathcal{R}}_c (t,-A) A^{-2},~~
	t>0.
	\end{eqnarray}
	Moreover, for each $s\in \mathbb R$, $\widetilde{\mathcal{R}}_c (\cdot,-A)$ belongs to
	$C(\mathbb R^+;\mathcal L( \mathcal H^s))$ and satisfies for some $C_1>0$,
	\begin{eqnarray*}\label{R1R2-regularity-only-parabolic}
		\big\| \widetilde{\mathcal{R}}_c (t,-A) \|_{
			\mathcal L( \mathcal H^s) }
		\leq  C_1 \exp\Big( 2(1+t) \big(1+\|M\|_{ C^{2}([0,t])} \big) \Big)
,~~t>0.
	\end{eqnarray*}
	\end{corollary}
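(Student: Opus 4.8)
The plan is to deduce the corollary directly from Proposition~\ref{cor-heat-wave-N=2-by-yb-20211126}, in the same spirit as the proof of Corollary~\ref{cor-only-h1}: collapse every term of the $N=2$ decomposition other than $e^{tA}$ into a single remainder acting through $A^{-2}$.

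First I would rewrite \eqref{demcomposition-N=2-PW} by subtracting $e^{tA}$ and factoring $A^{-2}$ on the right out of each surviving term, using $A^{-1}=A\cdot A^{-2}$ and $A^{-3}=A^{-1}\cdot A^{-2}$, to get
\[
\varPhi(t)-e^{tA}=\Big[\,e^{tA}\big(-tM(0)A+M(0)\big)-M(t)+\mathcal R_c(t,-A)A^{-1}\,\Big]A^{-2},\qquad t>0 ,
\]
with $\mathcal R_c$ given by Proposition~\ref{cor-heat-wave-N=2-by-yb-20211126}. This suggests defining the scalar symbol
\[
\widetilde{\mathcal R}_c(t,\tau):=e^{-t\tau}\big(tM(0)\tau+M(0)\big)-M(t)-\mathcal R_c(t,\tau)\,\tau^{-1},\qquad t>0,\ \tau>0 ,
\]
and then $\widetilde{\mathcal R}_c(t,-A)$ via spectral functional calculus. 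Recalling that $A$ and $A^{-1}$ act on $e_j$ as multiplication by $-\eta_j$ and $-\eta_j^{-1}$ respectively, one checks that $\widetilde{\mathcal R}_c(t,-A)$ equals the bracketed operator above, so that $\widetilde{\mathcal R}_c(t,-A)A^{-2}=\varPhi(t)-e^{tA}$, which is \eqref{demcomposition-N=2-only-parabolic}. The regularities $\widetilde{\mathcal R}_c\in C(\mathbb R^+;C(\mathbb R^+))$ and $\widetilde{\mathcal R}_c(\cdot,-A)\in C(\mathbb R^+;\mathcal L(\mathcal H^s))$ are then inherited from the continuity of $t\mapsto e^{tA}$ and $t\mapsto Ae^{tA}$ on $\mathbb R^+$, of $M$, and of $\mathcal R_c$ (the last from Proposition~\ref{cor-heat-wave-N=2-by-yb-20211126}), since finite sums and products of continuous operator-valued maps are continuous.

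For the norm bound I would estimate the three pieces of $\widetilde{\mathcal R}_c(t,-A)$ in $\mathcal L(\mathcal H^s)$ separately. From $\|e^{tA}\|_{\mathcal L(\mathcal H^s)}\le 1$ and \eqref{4.27,7.25} the first piece is at most $t\,|M(0)|\cdot t^{-1}+|M(0)|=2|M(0)|$; the second is $|M(t)|$; and the third is at most $\eta_1^{-1}\|\mathcal R_c(t,-A)\|_{\mathcal L(\mathcal H^s)}$ since $\|A^{-1}\|_{\mathcal L(\mathcal H^s)}=\eta_1^{-1}$. Plugging in \eqref{R1R2-regularity-by-yb-20211126} and absorbing $2|M(0)|+|M(t)|\le 3\|M\|_{C([0,t])}\le 3\exp\!\big(2(1+t)(1+\|M\|_{C^2([0,t])})\big)$ into the exponential yields the claimed estimate, e.g.\ with $C_1=3+\eta_1^{-1}$.

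There is no genuine obstacle here: the corollary is a bookkeeping consequence of Proposition~\ref{cor-heat-wave-N=2-by-yb-20211126}. The only point requiring care is the spectral-calculus manipulation across the two sign conventions ($A=\Delta$, whereas $-A$ is positive), i.e.\ verifying that factoring $A^{-2}$ out of the surviving terms in \eqref{demcomposition-N=2-PW} leaves exactly $e^{tA}(-tM(0)A+M(0))-M(t)+\mathcal R_c(t,-A)A^{-1}$, so that the pieces recombine into one bounded operator composed with $A^{-2}$.
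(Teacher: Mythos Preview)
Your proposal is correct and follows essentially the same route as the paper: define the same symbol $\widetilde{\mathcal R}_c(t,\tau)=e^{-t\tau}(tM(0)\tau+M(0))-M(t)-\mathcal R_c(t,\tau)\tau^{-1}$, identify $\widetilde{\mathcal R}_c(t,-A)$ via spectral calculus with the bracketed operator, and bound it using $\|tAe^{tA}\|_{\mathcal L(\mathcal H^s)}\le 1$ (equivalently \eqref{4.27,7.25}) together with \eqref{R1R2-regularity-by-yb-20211126}. Your write-up is in fact slightly more explicit about the constant than the paper's proof.
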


\subsection{Time-analyticity of solutions}
This subsection is concerned with the analyticity of the solutions to equation \eqref{our-obserble-system} with respect to the time variable.

\begin{proposition}\label{pro-analyticity-pointwise}
Let  $y_0 \in \cup_{s\in \mathbb R} \mathcal H^{s}$ be such that  
$(A^{-2}y_0)|_{\omega} \in L^2_{loc}(\omega)$ 
for some nonempty open subset $\omega\subset \Omega$. 
%the restriction of $A^{-2}y_0$ belongs to $L^2_{loc}(\omega)$, i.e.,
%$(A^{-2}y_0)|_{\omega} \in L^2_{loc}(\omega)$. 
Then, the following statements are true:
\begin{itemize}
  \item[] (i) The restriction of the solution $y(\cdot,\cdot;y_0)$ over $(0,+\infty) \times\omega$ belongs to $L^2_{loc}((0,+\infty) \times\omega)$.
  \item[] (ii) For a.e. $x\in \omega$, the function $t\mapsto y(t,x;y_0)$   is real analytic over $(0,+\infty)$.
\end{itemize}
\end{proposition}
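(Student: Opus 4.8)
The plan is to express the solution $y(\cdot,\cdot;y_0)$ via the decomposition of the flow provided by Corollary \ref{cor-only-parabolic}, namely $\varPhi(t) = e^{tA} + \widetilde{\mathcal R}_c(t,-A)A^{-2}$, and to treat the two summands separately. Write $y(t,\cdot;y_0) = u(t,\cdot) + v(t,\cdot)$, where $u(t,\cdot) := e^{tA}y_0$ solves the pure heat equation and $v(t,\cdot) := \widetilde{\mathcal R}_c(t,-A)A^{-2}y_0$. The term $v$ is smoothing: since $A^{-2}$ maps $\mathcal H^s$ into $\mathcal H^{s+4}$ and $\widetilde{\mathcal R}_c(t,-A)$ is bounded on every $\mathcal H^s$ with the continuity in $t$ stated in Corollary \ref{cor-only-parabolic}, we have $v \in C(\mathbb R^+;\mathcal H^{s+4})$ for any $s$ with $y_0\in\mathcal H^s$; choosing $s+4$ large enough (and iterating if necessary, or simply noting $v(t,\cdot)\in\mathcal H^{s+4}\subset H^{2}(\Omega)$ once $s\geq -2$, hence locally $L^2$ and in fact better) gives $v|_{(0,\infty)\times\omega}\in L^2_{loc}$. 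So part (i) reduces to the corresponding statement for $u = e^{tA}y_0$.

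For $u$, the hypothesis is precisely that $(A^{-2}y_0)|_\omega \in L^2_{loc}(\omega)$. The idea is to apply Lemma \ref{lem-heat-up-to-0} (the announced estimate for solutions of the pure heat equation), which should say that the heat semigroup applied to a datum whose image under a negative power of $A$ is locally $L^2$ near $\omega$ produces a solution that is locally $L^2$ on $(0,\infty)\times\omega$ — this is a local parabolic regularity / interior-estimate statement: $e^{tA}$ gains two derivatives per integer, so $e^{tA}y_0 = A^2 e^{tA}(A^{-2}y_0)$ and the local $L^2$ control of $A^{-2}y_0$ near $\omega$, combined with interior smoothing for the heat equation away from $t=0$, yields $u\in L^2_{loc}((0,\infty)\times\omega)$. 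This establishes (i).

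For part (ii), time-analyticity, I would again split $y = u+v$. For $u = e^{tA}y_0$: expanding $y_0 = \sum a_j e_j$ (with $\sum |a_j|^2\eta_j^s <\infty$), we have $u(t,\cdot) = \sum a_j e^{-\eta_j t} e_j$, and for $t\geq t_0>0$ this series converges in every $\mathcal H^k$ (the factor $e^{-\eta_j t}$ beats any power of $\eta_j$), so $t\mapsto u(t,\cdot)$ is a $\mathcal H^k$-valued — hence, by Sobolev embedding on the open set $\omega$ for $k$ large, a $C(\overline{\omega'})$-valued for $\omega'\Subset\omega$ — real analytic function on $\mathbb R^+$: the Taylor series $\sum_m \frac{(t-t_0)^m}{m!} A^m e^{t_0 A}y_0$ converges because $\|A^m e^{t_0A}\|_{\mathcal L(\mathcal H^s)}\leq (m/(et_0))^m$, giving a positive radius of convergence uniform on compact $t$-subintervals. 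Evaluating at a.e. $x\in\omega$ (a.e. to accommodate the identification of the $\mathcal H^k$-limit with a pointwise-defined function) gives analyticity of $t\mapsto u(t,x)$. For $v(t,\cdot) = \widetilde{\mathcal R}_c(t,-A)A^{-2}y_0$ one needs time-analyticity of $\widetilde{\mathcal R}_c(t,-A)$; this follows from the construction of $\widetilde{\mathcal R}_c$ out of $e^{tA}$ and the functions $p_i,h_i$ (which are real analytic in $t$ by assumption ($\mathfrak H$) and the ODE system \eqref{thm-ODE-meomery-asymptotic-estimate-hypobolic}) and $R_N(t,-A)$ — so one must track that the remainder term $R_N$ in Theorem \ref{cor-0423-demcomposition} is itself time-analytic with values in $\mathcal L(\mathcal H^s)$, with locally uniform radius of convergence. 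Granting that, $v$ is a locally-uniformly-convergent $\mathcal H^{s+4}$-valued power series in $t$, hence analytic, and adding $u$ and $v$ and restricting to a.e. $x\in\omega$ finishes (ii).

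The main obstacle I anticipate is part (ii) for the remainder term: establishing that the operator-valued remainder $R_N(t,-A)$ (equivalently $\widetilde{\mathcal R}_c(t,-A)$) is real analytic in $t$ with values in $\mathcal L(\mathcal H^s)$ and with a radius of convergence that does not degenerate on compact subintervals of $\mathbb R^+$. This requires revisiting the ODE/Volterra construction underlying Theorem \ref{cor-0423-demcomposition}: the coefficients solve linear ODEs (or Volterra equations) driven by the analytic kernel $M$, so Cauchy–Kovalevskaya-type majorant estimates give analyticity of each Fourier mode $\widehat{\mathcal R}_c(t,\eta_j)$ uniformly in $j$ after the $\tau^{-\beta}$-type gain, and one then sums. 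A secondary subtlety is the a.e.-in-$x$ passage: the $\mathcal H^k$-valued analytic function must be reconciled with the pointwise statement, which is handled by Sobolev embedding $\mathcal H^k\hookrightarrow C(\overline{\omega'})$ for $\omega'\Subset\omega$ and $k$ large (valid since $\omega$ is open), together with the local $L^2$ hypothesis on $A^{-2}y_0$ feeding through Lemma \ref{lem-heat-up-to-0} to control the low-regularity part near $t=0$ — though for (ii) one only needs $t$ bounded away from $0$, so the embedding argument alone suffices there.
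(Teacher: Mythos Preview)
Your decomposition via Corollary \ref{cor-only-parabolic} has a genuine gap when $y_0$ lies in a very negative $\mathcal H^{s}$. The term $v(t,\cdot)=\widetilde{\mathcal R}_c(t,-A)A^{-2}y_0$ is only guaranteed to sit in $\mathcal H^{s+4}$, and there is no ``iteration'' available from that corollary: the gain is fixed at four orders. So for, say, $y_0\in\mathcal H^{-10}$ you get $v(t,\cdot)\in\mathcal H^{-6}$, which is neither in $L^2_{\mathrm{loc}}(\omega)$ nor amenable to Sobolev embedding, and the spectral multiplier $\widetilde{\mathcal R}_c(t,-A)$ is nonlocal, so the local hypothesis $(A^{-2}y_0)|_\omega\in L^2_{\mathrm{loc}}(\omega)$ does not pass through it. Relatedly, your invocation of Lemma \ref{lem-heat-up-to-0} is off-target: that lemma controls $e^{tA}z$ \emph{outside} the support of a compactly supported $z$, whereas here the heat piece $u=e^{tA}y_0$ needs no local hypothesis at all (heat smoothing alone gives $u\in C(\mathbb R^+;L^2(\Omega))$ for $t>0$).

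The paper avoids this by expanding via Theorem \ref{cor-0423-demcomposition} with $N=n+m$ (where $y_0\in\mathcal H^{-2m}$), which \emph{separates} the wave-like piece $\sum_{l\ge 1} h_l(t)(-A)^{-l-1}y_0$ from the remainder. This is the step you are missing: those wave terms are not in $L^2(\Omega)$ globally when $l<m$, but the local hypothesis combined with iterated interior elliptic regularity for $A^{-1}$ gives $[(-A)^{-l-1}y_0]|_\omega\in L^2_{\mathrm{loc}}(\omega)$, after which the analytic-in-$t$ scalars $h_l(t)$ handle the time variable trivially. The high choice of $N$ then pushes the remainder $\mathfrak R_{N}(t)y_0$ into $\mathcal H^{2n+2}$, where your Sobolev-embedding idea would indeed work; the paper instead extends each Fourier mode $g_j(t)=\int_0^t e^{-\eta_j s}\partial_s^{N}K_M(t,s)\,ds$ analytically into a complex neighbourhood of $\mathbb R^+$ using the analyticity of $K_M$ (Proposition \ref{prop-varPhi-expression}), obtains bounds uniform in $j$, and then sums pointwise in $x$ via Weyl's law ($(\eta_j^{-n})\in\ell^2$). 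Either route to the remainder is fine, but the essential missing ingredient in your plan is the separate treatment of $\mathcal W_N$ through local elliptic regularity.
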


\begin{remark}
We remark that in general, the solutions of \eqref{our-obserble-system} are  not analytic in the space variable because of the finite-order regularizing effect of $\varPhi(t)$ (see \cite[Theorem 1.4]{WZZ-1}).
This shows a difference between the heat semigroup $\{e^{tA}\}_{t\geq 0}$ and $\{\varPhi(t)\}_{t\geq 0}$
from the perspective of the analyticity of solutions.
\end{remark}

\begin{proof}[Proof of Proposition \ref{pro-analyticity-pointwise}]
 Since
 \begin{align}\label{202202yb-ThmAnalyticityRevised-PropertiesOfY0}
   (A^{-2}y_0)|_{\omega} \in L^2_{loc}(\omega)
    ~\text{and}~
    y_0\in \mathcal H^{-2m}
    ~\text{for some}~
    m\in \mathbb N^+,
 \end{align}
 it follows from  \eqref{0423-demcomposition-eq} and \eqref{def-PN-HN-RN} with $N=n+m$ ($n$ is the space dimension) in the Appendix  that for each $t>0$,
\begin{align}\label{solution-decomposition-N=d+2}
   y(t,\cdot;y_0) = \varPhi(t)y_0
   =& e^{tA} y_0  +  e^{tA} \sum_{l=0}^{n+m-1}  p_l(t) (-A)^{-l-1} y_0
   \nonumber\\
   & +  \sum_{l=1}^{n+m-1}  h_l(t) (-A)^{-l-1} y_0
   +  \mathfrak R_{n+m}(t) y_0,
\end{align}
where $p_l,h_l$, and $\mathfrak R_{n+m}$ are given by \eqref{thm-ODE-meomery-asymptotic-estimate-hypobolic} and 
\eqref{def-PN-HN-RN}-\eqref{0921-RN-good-remainder}, respectively.  By \eqref{solution-decomposition-N=d+2},
we see that to  prove conclusions $(i)$ and $(ii)$, it suffices to show the following three assertions:
\begin{itemize}[leftmargin=4em]
  \item[(A1)] For each $l\in \mathbb N$, both  $p_l$ and $h_l$ are real analytic over $(0,+\infty)$;
  \item[(A2)] For each $l\in \mathbb N^+$,  $(-A)^{-l-1}y_0\in\mathcal H^{-2m}$ and
  $[(-A)^{-l-1}y_0]|_\omega\in L^2_{loc}(\omega)$;
  \item[(A3)] For each $z\in \mathcal H^{-2m}$, the functions $t\mapsto e^{tA}z$ ($t>0$) and $t\mapsto\mathfrak R_{n+m}(t) z$ ($t>0$) belong  to $C(\mathbb R^+;L^2(\Omega))$. Moreover,
      for each $z\in \mathcal H^{-2m}$ and for a.e. $x\in \Omega$,
      the functions $t \mapsto (e^{tA}z)(x)$ ($t>0$) and $ t \mapsto (\mathfrak R_{n+m}(t) z)(x)$ ($t>0$)  are real analytic.
           \end{itemize}
           We now prove (A1)--(A3). First,  (A1) follows from \eqref{thm-ODE-meomery-asymptotic-estimate-hypobolic} and the analyticity of $M$ over $[0,+\infty)$. Second, (A2) can be checked directly from \eqref{202202yb-ThmAnalyticityRevised-PropertiesOfY0} and the iterative use of the following property:
 \begin{align*}
   h\in \mathcal H^{-2m} \cap L^2_{loc}(\omega) \Rightarrow
   A^{-1}h \in \mathcal H^{-2m} \cap L^2_{loc}(\omega)
 \end{align*}
(the above property is due to the ellipticity of the operator $A=\Delta$ and is a direct consequence of  \cite[Theorem 18.1.29]{Hormander-3}).

The remaining task is to show (A3).
To this end, we arbitrarily fix $z \in \mathcal H^{-2m}$. Then, we have
\begin{align}\label{y0-Galerkin-coefficients}
   z = \sum_{j\geq 1} a_j \eta_j^m e_j
   ~~\text{for some}~
   (a_j)_{j\geq 1} \in   \ell^2
\end{align}
(recall that   $\eta_j$  is the $j^{th}$ eigenvalue of $-A$ and $e_j$
is the corresponding normalized eigenfunction in $L^2(\Omega)$).
From \eqref{y0-Galerkin-coefficients},  \eqref{def-PN-HN-RN}, and \eqref{0921-RN-good-remainder} in the Appendix,  we find
\begin{align}\label{solution-Galerkin-with-fj-and-L1bound}
   e^{tA} z =  \sum_{j\geq 1} f_j(t) \eta_j^{-n} a_j e_j
   ~\text{and}~
   \mathfrak R_{n+m}(t) z = \sum_{j\geq 1} g_j(t) \eta_j^{-n} a_j e_j,~
   t>0,
\end{align}
where
\begin{align}\label{def-fj-expression-by-yb}
   f_j(t):= \eta_j^{n+m}e^{-\eta_j t}
   ~\text{and}~
   g_j(t):=  \int_0^t e^{-\eta_j s} \partial_s^{n+m} K_M(t,s)  ds,~t>0.
\end{align}
Next, we will examine  the analyticity of $f_j$ and $g_j$ ($j\in \mathbb N^+$).
We claim the following:

 {\it There is an open subset $\mathcal O$ in $\mathbb C$ with $\mathcal O \supset(0,+\infty)$ such that each $g_j$ (resp., $f_j$) can be extended to be  an analytic function over $\mathcal O$, denoted by $\tilde g_j$ (resp., $\tilde f_j$), with the following estimate:
\begin{align}\label{gj-fj-uniform-estimates}
   \sup_{j\geq 1} \| \tilde g_j \|_{C(G)} < +\infty
   ~\Big(\text{resp.,  $\sup_{j\geq 1} \| \tilde f_j \|_{C(G)} < +\infty$}\Big)   ~\text{for each}~
   G\Subset \mathcal O.
\end{align}}
For this purpose, we use the real analyticity of  $K_M$  over the set $S_+:=\{(t,s)\in \mathbb R^2 ~:~ t\geq s\}$ (see Proposition \ref{prop-varPhi-expression} in the Appendix) to
 obtain an open subset $\mathcal D \supset S_+$ in $\mathbb C^2$ such that $K_M$ can be extended
 to an analytic function over
$\mathcal D$. We still use $K_M$ to denote this extension. We define
\begin{align*}
   \mathcal O := \Big\{ t \in \mathbb C  ~:~
   \text{Re}\,t>0\;\;\mbox{and}\;\;
     (t, t\tau) \in \mathcal D \;\mbox{for each}\;\tau\in[0,1]\Big\}
      \supset (0,+\infty),
\end{align*}
and for each $j\in \mathbb N^+$, we define the following function over $\mathcal O$:
\begin{align}\label{2.21,2-25}
  \tilde g_j(t) := t\int_0^1 e^{-\eta_j \tau t} \partial_s^{n+m} K_M(t,\tau t)  d\tau,~t\in\mathcal O.
\end{align}
Several properties on $\tilde g_j$ are given. First,
it follows from \eqref{2.21,2-25} and the analyticity of  $K_M$
that
 $\tilde g_j$ is analytic over $\mathcal O$. Second, it follows from
 \eqref{2.21,2-25} and
\eqref{def-fj-expression-by-yb} that  $\tilde g_j|_{(0,+\infty)}=g_j$.
Third, it follows from \eqref{2.21,2-25}
that
\begin{align*}%\label{fj-extension-estimate-by-yb-202112}
   | \tilde g_j(t)|  \leq&  |t|
     \Big( \sup_{0< \tau < 1} |\partial_s^{n+m} K_M(t,\tau t)| \Big),~t\in \mathcal O.
\end{align*}
From the above properties, we see that the above claim is true for $g_j$.
Similarly, we can show that it also holds for $f_j$. Thus, we have proven the above claim.

 Finally, by \eqref{y0-Galerkin-coefficients}, we have
 \begin{align*}
   \int_{\Omega} \sum_{j\geq 1}   |a_j e_j(x)|^2   dx
   = \sum_{j\geq 1} |a_j|^2
    < +\infty,
 \end{align*}
 which shows that
\begin{align}\label{aj-ejx-l2-convergence}
 \sum_{j\geq 1} |a_j e_j(x)|^2< +\infty
 ~\text{for a.e.}~x\in\Omega.
\end{align}
Meanwhile, by Weyl's asymptotic formula for the eigenvalues of the Laplace operator (see for instance \cite[Theorem XIII.78, pp. 271]{SIMON2}), we find that
$
  \lim_{j\rightarrow+\infty} \eta_j j^{-2/n}  >0,
$
showing that $(\eta_j^{-n})_{j\geq 1} \in \ell^2$. This, together with \eqref{aj-ejx-l2-convergence} and the Cauchy--Schwarz inequality, yields
\begin{align*}
  \sum_{j\geq 1}   \eta_j^{-n} |a_j e_j(x)|
  \leq \Big(\sum_{j\geq 1} \eta_j^{-2n} \Big)^{ \frac{1}{2} }
  \Big(\sum_{j\geq 1} |a_j e_j(x)|^2 \Big)^{ \frac{1}{2} }
  <+\infty
  ~\text{for a.e.}~
  x\in \Omega.
\end{align*}
Then, by  \eqref{gj-fj-uniform-estimates}, we see that for a.e. $x\in\Omega$, both series
\begin{align*}
   \sum_{j\geq 1} \tilde f_j(t) \eta_j^{-n} a_j e_j(x)
   ~\text{and}~
   \sum_{j\geq 1} \tilde g_j(t) \eta_j^{-n} a_j e_j(x), ~t\in \mathcal O
\end{align*}
 absolutely converge over each nonempty compact subset $G\subset \mathcal O$, and thus, their sums are analytic over $\mathcal O$. Thus, by \eqref{solution-Galerkin-with-fj-and-L1bound}, the  assertion (A3) follows at once.   This completes the proof.
\end{proof}

\subsection{Exact observability}

Throughout this subsection, {\it we suppose that   $Q\subset\mathbb R^+ \times \Omega$ is  a nonempty measurable subset.} We first present the following observability estimate.

\begin{theorem}\label{thm-QST-by-yb-202110}
Let   $T>S\geq 0$.
 We suppose that $(Q,S,T)$ satisfies the MOC.
Then, for some $C>0$,
	\begin{eqnarray}\label{exact-ob-twosides-by-yb-202110}
	  \|y_0\|_{\mathcal H^{-4}}
	\leq C \int_{S}^{T}
    \| \chi_Q(t,\cdot) y(t,\cdot;y_0) \|_{L^2(\Omega)} 	 dt
	 ~~\mbox{for each}~ y_0\in L^2(\Omega).
	\end{eqnarray}
\end{theorem}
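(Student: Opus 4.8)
The plan is to exploit the decomposition of the flow into a dominant "wave-like" term $-M(t)A^{-2}$ plus a more regular remainder, and to combine this with the cumulative-time lower bound supplied by the MOC. Concretely, I would first fix $\beta=3$ in Corollary \ref{cor-only-h1}, so that
\[
y(t,\cdot;y_0)=\varPhi(t)y_0=-M(t)A^{-2}y_0+\widehat{\mathcal R}_c(t,-A)(-A)^{-3}y_0,\qquad t>0,
\]
with $\|\widehat{\mathcal R}_c(t,-A)\|_{\mathcal L(\mathcal H^s)}$ bounded on compact subsets of $\mathbb R^+$. The idea is that on the interval $[S,T]$ with $S>0$ (or after harmlessly shrinking to a subinterval $[S',T]\subset(0,T]$ when $S=0$, using that the MOC passes to such subintervals) the term $A^{-2}y_0$ is the principal part: its $L^2(\Omega)$-norm is comparable to $\|y_0\|_{\mathcal H^{-4}}$, whereas $(-A)^{-3}y_0$ lives in $\mathcal H^{-6}\subset\mathcal H^{-4}$ with a gain. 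So the target lower bound should follow once we control $\int_S^T\|\chi_Q(t,\cdot)A^{-2}y_0\|_{L^2(\Omega)}\,dt$ from below by $c\|A^{-2}y_0\|_{L^2(\Omega)}$.

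For that pointwise-in-time piece, write $f:=A^{-2}y_0\in L^2(\Omega)$. We have $\|\chi_Q(t,\cdot)A^{-2}y_0\|_{L^2(\Omega)}^2=\int_\Omega\chi_Q(t,x)|f(x)|^2\,dx$. Integrating in $t$ over $[S,T]$ and using Tonelli's theorem,
\[
\int_S^T\int_\Omega\chi_Q(t,x)|f(x)|^2\,dx\,dt=\int_\Omega\Big(\int_S^T\chi_Q(t,x)\,dt\Big)|f(x)|^2\,dx\ \ge\ \mathcal T_\Omega(Q,S,T)\,\|f\|_{L^2(\Omega)}^2,
\]
by the definition \eqref{Q-S-T-bounded-from-below} of the MOC. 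To pass from this $L^2$-in-time control to the $L^1$-in-time control appearing in \eqref{exact-ob-twosides-by-yb-202110}, I would introduce the weight $|M(t)|$: the full solution satisfies $-M(t)f=y(t,\cdot;y_0)-\widehat{\mathcal R}_c(t,-A)(-A)^{-3}y_0$, so
\[
|M(t)|\,\|\chi_Q(t,\cdot)f\|_{L^2(\Omega)}\ \le\ \|\chi_Q(t,\cdot)y(t,\cdot;y_0)\|_{L^2(\Omega)}+C_S\|y_0\|_{\mathcal H^{-6}},
\]
uniformly for $t\in[S,T]$, where $C_S$ comes from \eqref{R1R2-regularity-only-h1}. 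The subtlety is that $M$ may vanish somewhere on $[S,T]$. Here the analyticity hypothesis $(\mathfrak H)$ saves us: since $M$ is real analytic and nonzero, its zero set in $[S,T]$ is finite, so $M$ is bounded below in absolute value away from finitely many points; more usefully, $\int_S^T|M(t)|\,\chi_Q(t,x)\,dt$ still has a uniform positive lower bound in $x$, because the MOC guarantees a set of positive measure of good times $t$ for a.e.\ $x$, and on a fixed interval $|M|$ cannot be small on a set of large measure. I would make this precise by a compactness/analyticity argument: there exist $\delta>0$ and $\rho>0$ such that the set $\{t\in[S,T]:|M(t)|\ge\rho\}$ has measure $\ge|T-S|-\delta$ with $\delta<\mathcal T_\Omega(Q,S,T)$, hence for a.e.\ $x$ the set of times in $Q_x$ where $|M(t)|\ge\rho$ has measure $\ge\mathcal T_\Omega(Q,S,T)-\delta>0$. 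Combining, $\int_S^T|M(t)|\|\chi_Q(t,\cdot)f\|_{L^2(\Omega)}\,dt\ge \rho\big(\mathcal T_\Omega(Q,S,T)-\delta\big)^{1/2}\|f\|_{L^2(\Omega)}$ (after one Cauchy--Schwarz step passing from the squared integrand back), and integrating the displayed pointwise inequality over $[S,T]$ gives
\[
c\,\|y_0\|_{\mathcal H^{-4}}\ \le\ \int_S^T\|\chi_Q(t,\cdot)y(t,\cdot;y_0)\|_{L^2(\Omega)}\,dt+C_S'\,\|y_0\|_{\mathcal H^{-6}}.
\]

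Finally I would absorb the lower-order term $\|y_0\|_{\mathcal H^{-6}}$ by a compactness–uniqueness argument, which I expect to be the main obstacle. Suppose \eqref{exact-ob-twosides-by-yb-202110} fails; take a normalized sequence $y_0^k\in L^2(\Omega)$ with $\|y_0^k\|_{\mathcal H^{-4}}=1$ and $\int_S^T\|\chi_Q y(\cdot;y_0^k)\|_{L^2}\,dt\to 0$. Since the embedding $\mathcal H^{-4}\hookrightarrow\mathcal H^{-6}$ is compact (Weyl asymptotics), pass to a subsequence with $y_0^k\to y_0^*$ in $\mathcal H^{-6}$; the inequality above forces $\|y_0^k\|_{\mathcal H^{-4}}$ to be Cauchy, so in fact $y_0^k\to y_0^*$ in $\mathcal H^{-4}$, with $\|y_0^*\|_{\mathcal H^{-4}}=1$ and $\chi_Q y(\cdot,\cdot;y_0^*)=0$ a.e.\ on $(S,T)\times\Omega$. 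It remains to derive $y_0^*=0$, i.e.\ a unique continuation statement: if $y(t,x;y_0^*)=0$ for a.e.\ $(t,x)\in Q$ and $(Q,S,T)$ satisfies the MOC, then $y_0^*=0$. This is exactly where Proposition \ref{pro-analyticity-pointwise} enters — for a.e.\ $x\in\Omega$ the map $t\mapsto y(t,x;y_0^*)$ is real analytic on $(0,\infty)$, and the MOC ensures that for a.e.\ $x$ it vanishes on a subset of $(S,T)$ of positive measure, hence vanishes identically in $t$; thus $y(t,\cdot;y_0^*)\equiv0$ on $\Omega$ for all $t$, and letting $t\to0^+$ (continuity of the flow in $C([0,\infty);L^2)$, or evaluating the decomposition) gives $y_0^*=0$, a contradiction. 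The delicate points to watch are: checking $y_0^*$ satisfies the analyticity hypothesis of Proposition \ref{pro-analyticity-pointwise} (it does, since $y_0^*\in\mathcal H^{-4}$ and one only needs $(A^{-2}y_0^*)|_\omega\in L^2_{loc}$ on some open $\omega$, which holds with $\omega=\Omega$ as $A^{-2}y_0^*\in\mathcal H^0=L^2(\Omega)$); and verifying that "vanishing on $Q$ a.e." does translate, via Fubini on the set where the analytic representative is used, into "vanishing on a positive-measure time-set for a.e.\ $x$."
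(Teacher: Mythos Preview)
Your proposal is correct and follows the same three-step strategy as the paper: a relaxed observability inequality (the paper's Lemma~\ref{lem-relaxed-exact-ob}) obtained from Corollary~\ref{cor-only-h1} with $\beta=3$ together with the analyticity of $M$, unique continuation via the time-analyticity in Proposition~\ref{pro-analyticity-pointwise} (the paper's Lemma~\ref{lem-unique-continuation-for-low-frequence}), and a compactness--uniqueness contradiction argument. The minor imprecisions---the lower bound on $\int_S^T|M(t)|\|\chi_Q(t,\cdot)f\|_{L^2}\,dt$ is $\rho(\mathcal T_\Omega-\delta)\|f\|_{L^2}$ rather than $\rho(\mathcal T_\Omega-\delta)^{1/2}\|f\|_{L^2}$ and comes from Minkowski's integral inequality (equivalently $\|\cdot\|_{L^2_x(L^1_t)}\le\|\cdot\|_{L^1_t(L^2_x)}$, exactly as in the paper) rather than Cauchy--Schwarz, and the continuity of the flow at $t=0$ for $y_0^*\in\mathcal H^{-4}$ holds in $\mathcal H^{-4}$, not $L^2$---do not affect the argument.
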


 We will prove  Theorem \ref{thm-QST-by-yb-202110}
 through  a modified three-step strategy that was originally designed for the observability of the wave equation (see, for instance, \cite{BLR92,RauchTaylor}). The proof requires  several technical lemmas. The first one presents estimates about the analytic functions and  the MOC.

\begin{lemma}\label{lem-estimates-MCC}
Let $T>0$ and let $f$ be a non-trivial real analytic function over $[0,T]$. Then, there are two constants $C,\beta>0$ (independent of $\Omega,\,Q$) such that for each $S\in[0,T)$,
\begin{align}\label{integral-bounded-from-below-by-yb-2021-10}
   \int_S^T \chi_Q(t,x) |f(t)|  dt
   \geq C \left( \int_S^T \chi_Q(t,x) dt \right)^{\beta+1}
   ~~\text{for a.e.}~
   x\in \Omega.
\end{align}
If we further assume that $(Q,S,T)$ satisfies the MOC, then
\begin{align}\label{MCC-epsilon-bounded-from-below-by-yb-2021-10}
   \underset{x\in\Omega}{\text{ess-inf}}
   \int_{S+\varepsilon}^T \chi_Q(t,x) |f(t)|  dt
   >0
   ~~\text{when}~
   0\leq  \varepsilon <\mathcal{T}_{\Omega}(Q,S,T),
\end{align}
where $\mathcal{T}_{\Omega}(Q,S,T)$ is given by \eqref{Q-S-T-bounded-from-below}.
\end{lemma}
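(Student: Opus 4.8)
The plan is to establish \eqref{integral-bounded-from-below-by-yb-2021-10} by controlling, for each fixed $x$, how small $|f|$ can be on a set of a given measure. Since $f$ is real analytic on the compact interval $[0,T]$ and (by assumption $(\mathfrak H)$-type reasoning, or because otherwise the statement is trivial after shrinking) not identically zero on any subinterval, its zero set $Z=\{t\in[0,T]:f(t)=0\}$ is finite, say $Z=\{t_1,\dots,t_k\}$, and near each $t_i$ we have $|f(t)|\geq c_i|t-t_i|^{m_i}$ for the order $m_i$ of vanishing. Away from a neighborhood of $Z$, $|f|$ is bounded below by a positive constant. Combining these local pictures, there exist $c>0$ and an integer $\beta>0$ (one can take $\beta=\max_i m_i$) such that
\[
|f(t)|\geq c\,\bigl(\mathrm{dist}(t,Z)\bigr)^{\beta}\quad\text{for all }t\in[0,T].
\]
The degenerate case $f\equiv 0$ is excluded implicitly (then the right side must also be interpreted as $0$); if one wants the statement literally as written, one replaces $f$ by $f$ restricted so that it is nonzero — but the clean route is simply to note that the lemma is only applied with a nonzero analytic $f$.

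Next I would run a rearrangement/bathtub argument, uniformly in $x$. Fix $x\in\Omega$ and write $E=E(x):=\{t\in[S,T]:\chi_Q(t,x)=1\}$, a measurable set of measure $\ell:=\mathcal T_{x,S,T}=\int_S^T\chi_Q(t,x)\,dt$. We want a lower bound for $\int_E|f|$ in terms of $\ell$ alone. By the pointwise bound above, $\int_E|f|\geq c\int_E \mathrm{dist}(t,Z)^{\beta}\,dt$, and the integral $\int_E \mathrm{dist}(t,Z)^{\beta}\,dt$ is minimized, over all measurable $E\subset[0,T]$ of measure $\ell$, by taking $E$ to be exactly the sublevel set $\{t: \mathrm{dist}(t,Z)\le \rho_\ell\}$ where $\rho_\ell$ is chosen so that this set has measure $\ell$ (this is the standard layer-cake comparison, since $\mathrm{dist}(\cdot,Z)^\beta$ has the same sublevel sets as $\mathrm{dist}(\cdot,Z)$). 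Because $Z$ has at most $k$ points, the set $\{\mathrm{dist}(t,Z)\le\rho\}$ has measure at most $2k\rho$, forcing $\rho_\ell\geq \ell/(2k)$, and on the worst-case set a direct integration gives
\[
\int_{\{\mathrm{dist}(t,Z)\le\rho_\ell\}}\mathrm{dist}(t,Z)^{\beta}\,dt\;\ge\;\frac{1}{\beta+1}\,\Bigl(\frac{\ell}{2k}\Bigr)^{\beta+1}.
\]
This yields \eqref{integral-bounded-from-below-by-yb-2021-10} with the constant $C=c\,(2k)^{-(\beta+1)}/(\beta+1)$, which depends only on $f$ and $T$, not on $S$ or $x$.

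Finally, to get \eqref{MCC-epsilon-bounded-from-below-by-yb-2021-10} I would apply \eqref{integral-bounded-from-below-by-yb-2021-10} with $S$ replaced by $S+\varepsilon$. The point is that the MOC for $(Q,S,T)$ gives, for a.e. $x$,
\[
\int_{S+\varepsilon}^T\chi_Q(t,x)\,dt\;\ge\;\int_S^T\chi_Q(t,x)\,dt-\varepsilon\;\ge\;\mathcal T_\Omega(Q,S,T)-\varepsilon\;>\;0
\]
whenever $0\le\varepsilon<\mathcal T_\Omega(Q,S,T)$, so the right-hand side of \eqref{integral-bounded-from-below-by-yb-2021-10} (applied on $[S+\varepsilon,T]$) is bounded below by the fixed positive quantity $C\,(\mathcal T_\Omega(Q,S,T)-\varepsilon)^{\beta+1}$, uniformly in $x$. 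Taking the essential infimum over $x\in\Omega$ gives \eqref{MCC-epsilon-bounded-from-below-by-yb-2021-10}.

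The main obstacle I anticipate is making the uniformity in $x$ genuinely clean: one must be sure that the comparison constant $C$ and exponent $\beta$ in \eqref{integral-bounded-from-below-by-yb-2021-10} come only from $f$ (through its finitely many zeros and their orders on $[0,T]$) and not from the particular set $E(x)$ or from $S$. The bathtub/layer-cake comparison handles exactly this, since the minimizing configuration depends only on the measure $\ell$ and on the fixed function $\mathrm{dist}(\cdot,Z)^\beta$; once that is in place the rest is bookkeeping. A secondary subtlety is the borderline/degenerate reading of the statement when $f$ could vanish on a whole subinterval — here the analyticity of $f$ on $[0,T]$ rules this out unless $f\equiv0$, the case one simply excludes in the applications.
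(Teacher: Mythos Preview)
Your proposal is correct and follows essentially the same route as the paper: both derive the pointwise bound $|f(t)|\ge c\,\mathrm{dist}(t,Z)^{\beta}$ from analyticity and then minimize $\int_E \mathrm{dist}(t,Z)^{\beta}\,dt$ over measurable sets $E$ of prescribed measure, with the derivation of \eqref{MCC-epsilon-bounded-from-below-by-yb-2021-10} from \eqref{integral-bounded-from-below-by-yb-2021-10} being identical. The only difference is in how the rearrangement step is packaged: the paper partitions $I(x)$ according to the nearest zero, uses pigeonhole to find a piece $E_{j_0}(x)$ of measure at least $|I(x)|/m$, and then symmetrizes around the single point $t_{j_0}$, whereas you invoke the bathtub principle directly on $\mathrm{dist}(\cdot,Z)^{\beta}$ and bound the increasing rearrangement via $|\{\mathrm{dist}(\cdot,Z)\le\rho\}|\le 2k\rho$; this is slightly slicker but amounts to the same computation.
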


\begin{proof}
Because  $f$ is real analytic, it
has at most a finite number of distinct zeros over $[0,T]$, denoted by $\{t_j\}_{j=1}^m$. Let
$d_j\in \mathbb N^+$ be the order of $t_j$ ($j=1,\ldots,m$). 
Then, $f(t) / \displaystyle\min_{1\leq j \leq m}  |t-t_j|^{d_j} $ can be extended to a continuous function over $[0,T]$ without zeros.  Thus, there is a $C_1>0$ such that
\begin{align*}
   |f(t)|  \geq C_1  \min_{1\leq j \leq m}  |t-t_j|^{d_j},
   ~~\text{when}~
   0\leq t \leq T.
\end{align*}
We set $\beta := \displaystyle\max_{1\leq j \leq m} d_j$.
Then, there is a $C_2>0$ such that
\begin{align}\label{kernel-M-bounded-from-below-by-yb-202110}
   |f(t)|  \geq C_1  \min_{1\leq j \leq m}  T^{d_j} \Big(|t-t_j|/T\Big)^{d_j}
   \geq C_2  \min_{1\leq j \leq m}  |t-t_j|^{\beta},\;\;\mbox{when}\;\;0\leq t \leq T.
\end{align}

We now show that \eqref{integral-bounded-from-below-by-yb-2021-10} is satisfied. To this end, we arbitrarily
 fix  $S\in[0,T)$.
For each $x\in \Omega$, we define  the following set:
\begin{align}\label{2.26,2-24}
  I(x):=\big\{t\in[S,T]~:~(t,x)\in Q \big\}.
\end{align}
Because   $Q\subset\mathbb R^+ \times \Omega$ is   measurable, we see that  for a.e. $x\in \Omega$, $I(x)$
is measurable. We arbitrarily fix  $x\in \Omega$ with $I(x)$ measurable. We define
\begin{align}\label{2.27,2-24}
  E_j(x) := \Big\{ t\in I(x)~:~ |t-t_j|^{\beta} =  \min_{1\leq k \leq m}  |t-t_k|^{\beta}
  \Big\},~~
  1\leq j \leq m.
\end{align}
The following facts hold:

 Fact 1: It follows  from \eqref{2.26,2-24}
and \eqref{kernel-M-bounded-from-below-by-yb-202110} that
\begin{align}\label{2.28,2-24}
\int_S^T \chi_Q(t,x) |f(t)|  dt &=
   \int_{I(x)}   |f(t)|  dt
   \geq C_2 \int_{I(x)} \min_{1\leq j \leq m}  |t-t_j|^{\beta} dt.
  \end{align}

  Fact 2: It follows from \eqref{2.26,2-24} and \eqref{2.27,2-24} that
   $I(x) = \displaystyle\bigcup_{j=1}^m E_j(x)$.  Thus, there is a $j_0\in\{1,\ldots,m\}$
   such that
\begin{align}\label{2.29,2-24}
   | E_{j_0}(x) | \geq  \frac{|I(x)|}{m}
   = \frac{1}{m}  \int_S^T \chi_Q(t,x) dt.
\end{align}

Fact 3: It follows
from \eqref{2.27,2-24} (where $j=j_0$) that
\begin{align}\label{2.30,2-24}
\int_{I(x)} \min_{1\leq j \leq m}  |t-t_j|^{\beta} dt
\geq
\int_{E_{j_0}(x)} |t- t_{j_0} |^{\beta} dt.
   \end{align}

   Fact 4: We have
   \begin{align}\label{2.31,2-24}
\int_{E_{j_0}(x)} |t- t_{j_0} |^{\beta} dt
\geq
\int_{|t-t_{j_0}| \leq \frac{| E_{j_0}(x) |}{2} }  |t- t_{j_0} |^{\beta} dt.
   \end{align}
To show that \eqref{2.31,2-24} is satisfied, we set
$E_{-}:=E_{j_0}(x) \cap (-\infty,t_{j_0})$ and $E_+ := E_{j_0}(x)\setminus E_{-}$.
We fix an open interval $I\Subset (-\infty,t_{j_0})$.
 Since the function
$t\mapsto |t-t_{j_0}|^\beta$ ($t\in \mathbb{R}$) is decreasing over
$(-\infty, t_{j_0})$, we have
\begin{align*}
\int_I |t- t_{j_0} |^{\beta} dt
   \geq \int_{t_{j_0}- |I|}^{ t_{j_0} } |t- t_{j_0} |^{\beta} dt.
   \end{align*}
Because any  subset of positive measure in $\mathbb{R}$ differs from a countable intersection of open sets by a null set, the above inequality, where $I$ is replaced by a subset of positive measure in $(-\infty, t_{j_0})$, still holds. In particular, we have
$
\int_{ E_{-} } |t- t_{j_0} |^{\beta} dt
\geq \int_{t_{j_0}- |E_{-}|}^{ t_{j_0} } |t- t_{j_0} |^{\beta} dt.
$
Similarly, we can show that
$
\int_{ E_{+} } |t- t_{j_0} |^{\beta} dt
\geq \int^{ t_{j_0}+ |E_{+}| }_{ t_{j_0} } |t- t_{j_0} |^{\beta} dt.
$
Thus, we have
\begin{equation}\label{2.32,2-24}
   \int_{E_{j_0}} |t- t_{j_0} |^{\beta} dt =
   \int_{ E_{-} } |t- t_{j_0} |^{\beta} dt  +  \int_{ E_{+} } |t- t_{j_0} |^{\beta} dt
   \geq \int_{t_{j_0}- |E_{-}|}^{ t_{j_0}+ |E_{+}| } |t- t_{j_0} |^{\beta} dt.
\end{equation}
Since $|E_{j_0}|=|E_+|+|E_-|$ and the function $t \mapsto|t- t_{j_0} |^{\beta}$
 ($t\in \mathbb{R}$) is symmetric about $t=t_{j_0}$, we obtain \eqref{2.31,2-24}
 from \eqref{2.32,2-24} at once.

  Fact 5: It follows from \eqref{2.29,2-24} that
\begin{align}\label{2.33,2-24}
\int_{|t-t_{j_0}| \leq \frac{| E_{j_0}(x) |}{2} }  |t- t_{j_0} |^{\beta} dt
\geq
\frac{2}{\beta+1} \left( \frac{| E_{j_0}(x) | }{2} \right)^{ \beta+1 }.
\end{align}

Hence, it follows from \eqref{2.28,2-24}, \eqref{2.30,2-24}, \eqref{2.31,2-24},
and \eqref{2.33,2-24} that
\begin{align*}
   \int_S^T \chi_Q(t,x) |f(t)|  dt
   \geq \frac{2C_2}{\beta+1} \left( \frac{1 }{2m}  \int_S^T \chi_Q(t,x) dt
   \right)^{ \beta+1 },
\end{align*}
which leads to \eqref{integral-bounded-from-below-by-yb-2021-10}.

Finally, we prove \eqref{MCC-epsilon-bounded-from-below-by-yb-2021-10}. To this end, we arbitrarily
 fix  $(Q,S,T)$ with the MOC and   $\varepsilon \in [0, \mathcal{T}_{\Omega}(Q,S,T))$. It is clear that
  \begin{align*}
    \int_{S+\varepsilon}^T \chi_Q(t,x) dt
    \geq \int_S^T \chi_Q(t,x) dt -\varepsilon
    ~~\text{for a.e.}~x\in \Omega.
  \end{align*}
  The above, along with \eqref{integral-bounded-from-below-by-yb-2021-10}
  (where $S$ is replaced by $S+\varepsilon$) and \eqref{Q-S-T-bounded-from-below}, leads to
 \eqref{MCC-epsilon-bounded-from-below-by-yb-2021-10}.
 This completes the proof.
\end{proof}

The following lemma corresponds to the first step of the above-mentioned three-step strategy, which is  a relaxed observability inequality.
\begin{lemma}\label{lem-relaxed-exact-ob}
	Let $T>S\geq 0$.
We assume that $(Q,S,T)$ satisfies the MOC.
Then, there is a $C>0$  such that
	\begin{eqnarray}\label{0425-high-frequency-ob-ineq-0}
	C \|y_0\|_{\mathcal H^{-4}}
	\leq \int_{S}^{T}
	\|\chi_Q(t,\cdot) y(t,\cdot;y_0) \|_{L^2(\Omega)}  dt
	+ \|y_0\|_{\mathcal H^{-6}}
	 \;\;\mbox{for each}\;\; y_0\in L^2(\Omega).
	\end{eqnarray}
	\end{lemma}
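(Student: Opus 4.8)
The plan is to run the standard compactness–uniqueness argument but anchored at the decomposition $\varPhi(t)=-M(t)A^{-2}+\widehat{\mathcal R}_c(t,-A)(-A)^{-\beta}$ from Corollary \ref{cor-only-h1} (taking, say, $\beta=3$), which isolates the wave-like leading term $-M(t)A^{-2}$ that propagates along the characteristic lines $\{x\}\times(S,T)$. First I would argue by contradiction: if \eqref{0425-high-frequency-ob-ineq-0} fails, there is a sequence $(y_0^k)\subset L^2(\Omega)$ with $\|y_0^k\|_{\mathcal H^{-4}}=1$ and $\int_S^T\|\chi_Q(t,\cdot)y(t,\cdot;y_0^k)\|_{L^2(\Omega)}\,dt+\|y_0^k\|_{\mathcal H^{-6}}\to 0$. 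Since the embedding $\mathcal H^{-4}\hookrightarrow\mathcal H^{-6}$ is compact (Weyl's law again), after extracting a subsequence $y_0^k\to y_0^\ast$ strongly in $\mathcal H^{-6}$; but the second term forces $\|y_0^\ast\|_{\mathcal H^{-6}}=0$, hence $y_0^\ast=0$. The difficulty is then to upgrade this to a contradiction with $\|y_0^k\|_{\mathcal H^{-4}}=1$.

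To do that I would pass to the limit in the observation term. The key point is that $\mathcal H^{-4}$ is compactly embedded in $\mathcal H^{-6}$, and — using Corollary \ref{cor-only-h1} with $\beta=3$ together with the smoothing bounds \eqref{R1R2-regularity-only-h1} on each compact subinterval $[S+\delta,T]$ — the map $y_0\mapsto \chi_Q\,\varPhi(\cdot)y_0$ is continuous from $\mathcal H^{-6}$ into $L^1(S+\delta,T;L^2(\Omega))$ for every $\delta>0$ (all three components $e^{tA}(\cdots)$, $-M(t)A^{-2}$, and $\widehat{\mathcal R}_c(t,-A)(-A)^{-3}$ are uniformly bounded on $[S+\delta,T]$ in $\mathcal L(\mathcal H^{-6},\mathcal H^{-6})$, and $\mathcal H^{-6}\hookrightarrow L^2(\Omega)$ after applying $A^{-2}$, i.e. on the relevant ranges). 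Hence $\chi_Q y(\cdot,\cdot;y_0^k)\to \chi_Q y(\cdot,\cdot;y_0^\ast)=0$ in $L^1(S+\delta,T;L^2(\Omega))$, which is automatic once $y_0^\ast=0$ and in fact gives no new information directly; the real content is obtained by instead normalizing differently. So I would renormalize: writing $z_0^k:=y_0^k/\|y_0^k\|_{\mathcal H^{-6}}$ would blow up, so instead one keeps the $\mathcal H^{-4}$ normalization and extracts a weak limit in $\mathcal H^{-4}$. Weak $\mathcal H^{-4}$-convergence plus strong $\mathcal H^{-6}$-convergence to $0$ gives $y_0^\ast=0$ and $y_0^k\rightharpoonup 0$ weakly in $\mathcal H^{-4}$.

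Now comes the crux, which is where I expect the main obstacle. We must show that $\int_S^T\|\chi_Q(t,\cdot)y(t,\cdot;y_0^k)\|_{L^2(\Omega)}\,dt\to 0$ together with $y_0^k\rightharpoonup 0$ in $\mathcal H^{-4}$ forces $\|y_0^k\|_{\mathcal H^{-4}}\to 0$, contradicting the normalization. The strategy is: decompose $y(t,\cdot;y_0^k)=-M(t)A^{-2}y_0^k+\widehat{\mathcal R}_c(t,-A)(-A)^{-3}y_0^k$; the remainder term, by \eqref{R1R2-regularity-only-h1} on $[S,T]$ (if $S>0$) or on $[S+\varepsilon,T]$ (if $S=0$, using Lemma \ref{lem-estimates-MCC} to absorb the loss near $t=0$), maps $y_0^k$ into a set that is precompact in $L^1(S',T;L^2(\Omega))$, hence converges strongly to $0$; therefore $\int_{S'}^T\|\chi_Q(t,\cdot)M(t)A^{-2}y_0^k\|_{L^2(\Omega)}\,dt\to 0$. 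Since $M$ is real analytic and nonzero, Lemma \ref{lem-estimates-MCC} applied with $f=M$ yields $\mathrm{ess\,inf}_{x\in\Omega}\int_{S'}^T\chi_Q(t,x)|M(t)|\,dt=:c_0>0$ for suitable $S'$ close to $S$; pulling the $L^2(\Omega)$-norm inside via Fubini and the triangle inequality, $\int_{S'}^T\|\chi_Q(t,\cdot)M(t)A^{-2}y_0^k\|_{L^2(\Omega)}\,dt\geq \big\|\,(\int_{S'}^T\chi_Q(t,\cdot)|M(t)|\,dt)\,|A^{-2}y_0^k|\,\big\|_{L^2(\Omega)}\cdot(\text{something})$ — more carefully, one gets a lower bound of the form $c_0'\,\|A^{-2}y_0^k\|_{L^2(\Omega)}=c_0'\,\|y_0^k\|_{\mathcal H^{-4}}$ by first using that $\int_{S'}^T\chi_Q(t,x)|M(t)|\,dt\geq c_0$ pointwise and then integrating $|(A^{-2}y_0^k)(x)|$ over $\Omega$; this requires care to turn the $L^1$-in-time estimate into an $L^2$-in-space estimate, which is the technical heart. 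Combining, $\|y_0^k\|_{\mathcal H^{-4}}\to 0$, contradicting $\|y_0^k\|_{\mathcal H^{-4}}=1$, and the lemma follows. The main obstacle, as flagged, is the interchange of the time integral and the spatial $L^2$-norm so as to convert the ``cumulative time'' lower bound on the characteristic lines into a clean lower bound for $\|A^{-2}y_0^k\|_{L^2(\Omega)}$; I would handle it by working first pointwise in $x$ (where the characteristic-line structure of $Q_x$ is transparent) and only afterwards integrating in $x$, exactly as the slicing in Remark \ref{remark1.1,2-22}(ii) and Lemma \ref{lem-estimates-MCC} are set up to permit.
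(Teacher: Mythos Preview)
Your core ingredients are exactly those of the paper --- the decomposition $\varPhi(t)=-M(t)A^{-2}+\widehat{\mathcal R}_c(t,-A)(-A)^{-3}$ from Corollary~\ref{cor-only-h1}, the lower bound $\operatorname*{ess\,inf}_{x\in\Omega}\int_{S'}^{T}\chi_Q(t,x)|M(t)|\,dt>0$ from Lemma~\ref{lem-estimates-MCC}, and the time/space norm interchange --- but you wrap them in a compactness--uniqueness contradiction argument that is unnecessary here. The paper proceeds \emph{directly}: set $\varepsilon_0:=\tfrac12\mathcal T_\Omega(Q,S,T)$, apply the triangle inequality on $(S+\varepsilon_0,T)$ to get
\[
\int_{S+\varepsilon_0}^{T}\!\|\chi_Q(t,\cdot)M(t)A^{-2}y_0\|_{L^2(\Omega)}\,dt
\le \int_{S+\varepsilon_0}^{T}\!\|\chi_Q(t,\cdot)y(t,\cdot;y_0)\|_{L^2(\Omega)}\,dt
+C\,\|y_0\|_{\mathcal H^{-6}},
\]
and then bound the left side below by $c_0\|A^{-2}y_0\|_{L^2(\Omega)}=c_0\|y_0\|_{\mathcal H^{-4}}$. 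The ``main obstacle'' you flag --- swapping $\int_t$ and $\|\cdot\|_{L^2_x}$ --- is dispatched in one line via the generalized Minkowski inequality $\|f\|_{L^2(\Omega;L^1(I))}\le\|f\|_{L^1(I;L^2(\Omega))}$, after which the pointwise-in-$x$ lower bound on $\int_{S+\varepsilon_0}^{T}\chi_Q(t,x)|M(t)|\,dt$ applies immediately. Your contradiction argument would also work (the remainder term is $O(\|y_0^k\|_{\mathcal H^{-6}})\to0$, and the leading term then forces $\|y_0^k\|_{\mathcal H^{-4}}\to0$), but notice that inside it you are effectively re-deriving the direct inequality anyway; the sequence extraction and weak limits add nothing. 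The paper reserves the compactness--uniqueness machinery for the \emph{next} step (Theorem~\ref{thm-QST-by-yb-202110}), where, together with the unique continuation Lemma~\ref{lem-unique-continuation-for-low-frequence}, it is genuinely needed to absorb the $\|y_0\|_{\mathcal H^{-6}}$ term.
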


\begin{proof}
Since $(Q,S,T)$ satisfies the MOC,
 it follows from \eqref{Q-S-T-bounded-from-below} that
\begin{align}\label{epsilon-choice-by-yb-202110}
   \varepsilon_0:= \frac{1}{2} \mathcal{T}_{\Omega}(Q,S,T)
                > 0.
\end{align}
	We arbitrarily fix a   $y_0\in L^2(\Omega)$. We recall that
$y(t,\cdot;y_0)= \varPhi(t) y_0$.
By Corollary \ref{cor-only-h1} (with $\beta=3$),
	using
	the triangle inequality,
	we can find a  $C_1>0$ (independent of $z$)  such that
	\begin{align*}
		\|\chi_Q(t,\cdot) y(t,\cdot;y_0)\|_{L^2(\Omega)}
		\geq
		\|  \chi_Q(t,\cdot) M(t) A^{-2} y_0 \|_{L^2(\Omega)}
		-  C_1 t^{-3} \|A^{-3} y_0\|_{ L^2(\Omega) },~ t\in(S+\varepsilon_0,T).
	\end{align*}
 This yields
	\begin{align}\label{0425-high-frequency-ob-ineq-2}
	& \int_{S+\varepsilon_0}^{T}
	\| \chi_Q(t,\cdot) M(t) A^{-2} y_0 \|_{L^2(\Omega)} dt
\nonumber\\
	 \leq& \int_{S+\varepsilon_0}^{T}   \|\chi_Q(t,\cdot) y(t,\cdot;y_0)\|_{L^2(\Omega)}  dt
	+  C_1 (S+\varepsilon_0)^{-3}   \|y_0\|_{\mathcal H^{-6}}.
	\end{align}
Meanwhile, we have
	\begin{align*}
	  	\bigg\| \Big( \int_{S+\varepsilon_0}^T \chi_Q(t,\cdot)   |M(t)|  dt \Big)
	A^{-2} y_0  \bigg\|_{L^2(\Omega)}
    \leq  \int_{S+\varepsilon_0}^T
    \| \chi_Q(t,\cdot) M(t) A^{-2} y_0 \|_{L^2(\Omega)}  dt.
	\end{align*}
From this and \eqref{0425-high-frequency-ob-ineq-2}, we obtain
\begin{align}\label{leading-estimate-lower-bound-by-yb-202110}
	& \left( \int_{\Omega}
\left(  \int_{S+\varepsilon_0}^T \chi_Q(t,x)  | M(t) |  dt
\right)^2
|(A^{-2}y_0)(x)|^2 dx
\right)^{ \frac{1}{2} }
\nonumber\\
	 \leq& \int_{S+\varepsilon_0}^{T}   \|\chi_Q(t,\cdot) y(t,\cdot;y_0)\|_{L^2(\Omega)}  dt
	+  C_1 (S+\varepsilon_0)^{-3}   \|y_0\|_{\mathcal H^{-6}}.
\end{align}
At the same time,
 since $(Q,S,T)$ satisfies the MOC,
 it follows from \eqref{MCC-epsilon-bounded-from-below-by-yb-2021-10} (with $(f,\varepsilon)=(M,\varepsilon_0)$) 
  and \eqref{epsilon-choice-by-yb-202110}  that for a.e. $x\in\Omega$,
\begin{align*}
    \int_{S+\varepsilon_0}^T \chi_Q(t,x) |M(t)|  dt
\geq   \underset{x\in\Omega}{\text{ess-inf}} 
\left(  \int_{S+\varepsilon_0}^T \chi_Q(t,x) |M(t)|  dt 
\right)
   >0.
\end{align*}
	The above, along with \eqref{leading-estimate-lower-bound-by-yb-202110}, yields \eqref{0425-high-frequency-ob-ineq-0}. This  completes  the proof.
\end{proof}

The next lemma gives a  unique continuation property for the solutions to equation \eqref{our-obserble-system}.
 It corresponds to
the second step in the above-mentioned three-step strategy.

\begin{lemma}\label{lem-unique-continuation-for-low-frequence}
	Let $T>S\geq 0$ and let $y_0 \in \mathcal H^{-4}$.
 	We assume that $(Q,S,T)$ satisfies the MOC.
	If
	\begin{eqnarray}\label{varPhi-Q-zero}
	y(t,x;y_0)=0
	~~\mbox{for a.e.}~  (t,x)\in Q\cap \big((S,T)\times\Omega\big),
	\end{eqnarray}
	then $y_0=0$ in $\mathcal H^{-4}$.
\end{lemma}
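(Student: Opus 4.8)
The plan is to exploit the decomposition of the flow together with the time-analyticity of solutions, reducing the vanishing of $y$ on $Q$ to the vanishing of a single analytic term, from which we extract the vanishing of $A^{-2}y_0$ and hence of $y_0$. First I would fix $y_0\in\mathcal H^{-4}$ satisfying \eqref{varPhi-Q-zero}. Since $\mathcal H^{-4}\subset\mathcal H^{-2m}$ for $m=2$ and $A^{-2}y_0\in L^2(\Omega)$ (so in particular $(A^{-2}y_0)|_\omega\in L^2_{loc}(\omega)$ for any open $\omega\subset\Omega$), Proposition \ref{pro-analyticity-pointwise} applies: for a.e.\ $x\in\Omega$ the map $t\mapsto y(t,x;y_0)$ is real analytic on $(0,+\infty)$. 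Now fix such an $x$ for which, in addition, the slice $Q_x$ has positive measure in $[S,T]$; by the MOC \eqref{Q-S-T-bounded-from-below} this holds for a.e.\ $x\in\Omega$, with $|Q_x\cap[S,T]|\ge \mathcal T_\Omega(Q,S,T)>0$. By \eqref{varPhi-Q-zero} the analytic function $t\mapsto y(t,x;y_0)$ vanishes on the positive-measure set $Q_x\cap(S,T)$, hence vanishes identically on $(0,+\infty)$. Thus $y(t,\cdot;y_0)=0$ in $L^2(\Omega)$ for every $t>0$ (after re-integrating over $x$, or directly: for a.e.\ $x$ the function vanishes for all $t$, so by Fubini $y(t,\cdot;y_0)=0$ a.e.\ for a.e.\ $t$, and by continuity in $t$ for all $t>0$).

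Next I would use Corollary \ref{cor-only-h1}. With $\beta\in[2,3]$ fixed, \eqref{demcomposition-N=2-only-h1} gives
\begin{align*}
0 = y(t,\cdot;y_0) = \varPhi(t)y_0 = -M(t) A^{-2}y_0 + \widehat{\mathcal R}_c(t,-A)(-A)^{-\beta}y_0,\qquad t>0,
\end{align*}
so that
\begin{align*}
M(t)\,A^{-2}y_0 = \widehat{\mathcal R}_c(t,-A)(-A)^{-\beta}y_0,\qquad t>0.
\end{align*}
I want to conclude $A^{-2}y_0=0$. The idea is to test against the leading behaviour near a suitable point: take $t\to 0^+$ along a sequence where $M(t)$ does not vanish too fast. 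Since $M$ is real analytic and nonzero on $[0,+\infty)$, $M$ has a zero of finite order at $t=0$, say $M(t)\sim c\,t^k$ with $c\neq0$; on the other hand, by \eqref{R1R2-regularity-only-h1}, $\|\widehat{\mathcal R}_c(t,-A)\|_{\mathcal L(\mathcal H^s)}\le C t^{-\beta}e^{2(1+t)(1+\|M\|_{C^2([0,t])})}$, which blows up like $t^{-\beta}$. Hence dividing by $M(t)$ yields $\|A^{-2}y_0\|_{\mathcal H^s}\le C t^{-\beta-k}(\cdots)\|(-A)^{-\beta}y_0\|_{\mathcal H^s}$, which is not immediately conclusive. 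I would instead argue by choosing $\beta$ large relative to $k$: working in $\mathcal H^0=L^2(\Omega)$ and writing $(-A)^{-\beta}y_0 = (-A)^{-\beta+2}(A^{-2}y_0)$, set $z:=A^{-2}y_0\in L^2(\Omega)$ and note the relation becomes $M(t)z = \widehat{\mathcal R}_c(t,-A)(-A)^{-\beta+2}z$. If $z\neq0$, pick a point $t_0>0$ with $M(t_0)\neq0$ (possible since $M\not\equiv0$ is analytic) and a spectral projection argument: expand $z=\sum_j z_j e_j$, so $M(t)z_j = \big(\widehat{\mathcal R}_c(t,-A)(-A)^{-\beta+2}\big)_{jj}$-type coefficients — more cleanly, since for each $j$ the scalar function $t\mapsto (\varPhi(t)e_j, e_k)$ is analytic and equals the $j$-th modal solution, and since $y(t,\cdot;y_0)\equiv0$ forces every modal coefficient $(\varPhi(t)y_0,e_j)\equiv0$ for all $t>0$. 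But $(\varPhi(t)y_0,e_j)$ solves a scalar Volterra integro-ODE with initial value $(y_0,e_j)=\eta_j^2 z_j$ (up to sign); a homogeneous linear Volterra equation with the identically zero solution forces the initial datum to vanish, so $z_j=0$ for all $j$, i.e.\ $z=A^{-2}y_0=0$, whence $y_0=0$ since $A^{-2}$ is injective.

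The cleanest route, which I would ultimately adopt, bypasses the asymptotics entirely: once we know $\varPhi(t)y_0=0$ for all $t>0$, project onto each eigenfunction $e_j$. The scalar quantity $u_j(t):=\langle y(t,\cdot;y_0),e_j\rangle_{L^2(\Omega)}$ satisfies $u_j'(t) = -\eta_j u_j(t) - \int_0^t M(t-s)u_j(s)\,ds$ with $u_j(0)=\langle y_0,e_j\rangle$; if $u_j\equiv0$ on $(0,+\infty)$ then, by continuity up to $t=0$, $u_j(0)=0$. Hence $\langle y_0,e_j\rangle=0$ for all $j$, so $y_0=0$. The main obstacle is the first part — upgrading ``$y$ vanishes on $Q$'' (a measurable set with only the MOC slice-regularity) to ``$y\equiv0$ for all $t>0$'': this is exactly where Proposition \ref{pro-analyticity-pointwise} (pointwise-in-$x$ time-analyticity, which in turn rests on the decomposition theorem and analyticity of the kernel $K_M$) does the essential work, combined with the fact that a nontrivial real-analytic function cannot vanish on a set of positive measure. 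Everything after that is soft. I would also record that the hypothesis $y_0\in\mathcal H^{-4}$ (rather than $L^2(\Omega)$) is used precisely to guarantee $A^{-2}y_0\in L^2(\Omega)$ so that Proposition \ref{pro-analyticity-pointwise} is applicable with $\omega=\Omega$.
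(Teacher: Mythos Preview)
Your proof is correct and follows essentially the same route as the paper: both use Proposition~\ref{pro-analyticity-pointwise} (applicable since $A^{-2}y_0\in L^2(\Omega)$) together with the MOC to upgrade vanishing on $Q$ to $y(t,\cdot;y_0)=0$ for all $t>0$, and then conclude $y_0=0$. The only cosmetic difference is the last step: the paper invokes continuity of $\varPhi(\cdot)$ on $\mathcal H^{-4}$ at $t=0$ (via Corollary~\ref{cor-only-h1} with $\beta=2$ and the decomposition), while you argue by Fourier projection onto each $e_j$---both are valid, and your intermediate asymptotic detour can be dropped.
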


\begin{proof}
First, note that $A^{-2}y_0\in L^2(\Omega)$ since
$y_0 \in \mathcal H^{-4}$. Second, notice that for a.e. $x\in \Omega$, $y(\cdot,x;y_0)$ vanishes over a subset of positive measure in $\mathbb{R}$. Indeed,
  since $(Q,S,T)$ satisfies the MOC,
we can use  \eqref{Q-S-T-bounded-from-below}
 to see that for a.e. $x\in \Omega$,
   the set   \begin{align*}
     I(x)
 :=\Big\{t>0~:~ (t,x)\in Q\cap \big((S,T)\times\Omega\big)
 \Big\}
\end{align*}
  is a subset of positive measure in $\mathbb{R}$. 
 This, along with \eqref{varPhi-Q-zero}, yields the second fact.

 Finally, from the above two facts, we can use
  Proposition \ref{pro-analyticity-pointwise}
  to determine that for a.e. $x\in \Omega$, $y(\cdot,x;y_0)\equiv 0$ over $(0,+\infty)$.
   Since $y_0\in \mathcal H^{-4}$,  $y(\cdot,\cdot;y_0)\in C((0,+\infty);L^2(\Omega))$ is obtained from Corollary \ref{cor-only-h1} (with $\beta=2$).
Therefore, $y(\cdot,\cdot;y_0)=0$   in $C((0,+\infty);L^2(\Omega))$.
          On the other hand,  given that $\{\varPhi(t)\}_{t\geq 0}$ is also a $C_0$ semigroup over $\mathcal H^{-4}$, we have
          \begin{align*}
            y_0(\cdot) = \lim_{t\rightarrow 0^+}  y(t,\cdot;y_0)
            ~\text{in}~  \mathcal H^{-4}.
          \end{align*}
      Then  $y_0=0$ in $\mathcal H^{-4}$ and this completes the proof.
\end{proof}

\begin{remark}\label{remark3.5}
In \cite{Zhou-Gao}  it was shown that, when  $M$ is an exponential function, if $y=0$ over
$(0,T)\times\omega$ (where $\omega$ is an open nonempty subset of $\Omega$), then
$y$ is identically zero.
 We do not know if  such a unique continuation property holds for general analytic memory kernels. But the unique continuation property shown above suffices for our purposes since we will be dealing with observation sets fulfilling the MOC condition.
\end{remark}

We are now in a position to prove Theorem \ref{thm-QST-by-yb-202110} combining the compactness--uniqueness argument
and  Lemmas \ref{lem-relaxed-exact-ob}--\ref{lem-unique-continuation-for-low-frequence}.

\begin{proof}[Proof of Theorem \ref{thm-QST-by-yb-202110}]
By contradiction, we suppose that   \eqref{exact-ob-twosides-by-yb-202110} is not true.
 Then, there is a $\{ \hat z_k \}_{k=1}^\infty \subset
	L^2(\Omega) $
	such that
	\begin{eqnarray}\label{0427-second-s3-thm3.2-ob-1}
	\| \hat z_k \|_{\mathcal H^{-4}}
	= 1\;\;\mbox{for each}\;\;k\in \mathbb N^+;
	\;\;\;\;
	\lim_{k\rightarrow\infty} \int_{S}^{T}    \|\chi_Q(t,\cdot) y(t,\cdot;\hat z_k) \|_{L^2(\Omega)}  dt
	= 0.
	\end{eqnarray}
	From the first equality in (\ref{0427-second-s3-thm3.2-ob-1}), we can find  a subsequence of $\{ \hat z_k \}_{k=1}^\infty$,  denoted in the same manner, and $\hat z \in \mathcal H^{-4} $ such that
	\begin{eqnarray}\label{0427-second-s3-thm3.2-ob-3}
	\hat z_k  \;\rightharpoonup\;   \hat z
	\;\;\mbox{weakly in}\;\;
	\mathcal H^{-4} ,
	\;\;\mbox{as}\;\;
	k\rightarrow\infty.
	\end{eqnarray}
 According to    Corollary \ref{cor-only-h1} (with $\beta=3$ and $s=0$) as well as \eqref{0427-second-s3-thm3.2-ob-3}, there is a subsequence of $\{ \hat z_k \}_{k=1}^\infty$,
denoted in the same manner, such that
\begin{eqnarray*}
y(\cdot,\cdot;\hat z_k) \;\rightharpoonup\;
 y(\cdot,\cdot;\hat z)\;\;\mbox{weakly in} \;\; L^2_{loc}((0,+\infty);L^2(\Omega)).
\end{eqnarray*}
This, along with  the second equality in (\ref{0427-second-s3-thm3.2-ob-1}), yields
	\begin{eqnarray}\label{2.43,2-25}
		y(\cdot,\cdot;\hat z) =0
		\;\;\mbox{over}\;\;
		Q \cap \big( (S,T)\times \Omega \big).
	\end{eqnarray}
	Since $\hat z \in \mathcal H^{-4} $ and   $(Q,S,T)$ satisfies the MOC,
 we can apply Lemma \ref{lem-unique-continuation-for-low-frequence}
and \eqref{2.43,2-25} to determine
that $\hat z=0 \;\;\mbox{in}\;\; \mathcal H^{-4}$.
		This, together with (\ref{0427-second-s3-thm3.2-ob-3}), implies that
	$\displaystyle\lim_{k\rightarrow\infty} \| \hat z_k  \|_{\mathcal H^{-6} }=0$.
		Thus, we can use  Lemma \ref{lem-relaxed-exact-ob}, as well as  the second equality in  (\ref{0427-second-s3-thm3.2-ob-1}), to find that
	$\displaystyle\lim_{k\rightarrow\infty} \| \hat z_k  \|_{\mathcal H^{-4}  }=0$,
	which contradicts the first equality in (\ref{0427-second-s3-thm3.2-ob-1}). Hence,  \eqref{exact-ob-twosides-by-yb-202110} is true. This ends the proof.
\end{proof}

\section{Proof of the main theorem}\label{sec-main-proofs}

 This section aims to prove Theorem \ref{202205TJyb-MainTheorem-merged}.

\begin{proof}[Proof of Theorem \ref{202205TJyb-MainTheorem-merged}]
We first aim to show that  ($i$) implies ($ii$). For this purpose, we assume that the statement $(i)$ is true, i.e., the triplet $(Q,S,T)$ satisfies the MOC. Then, by Theorem \ref{thm-QST-by-yb-202110},  we obtain the first inequality in \eqref{202205TJYB-TwoSideObservability}. To show the second inequality in \eqref{202205TJYB-TwoSideObservability},
we  apply  Corollary \ref{cor-only-h1} (with $\beta=2$ and $s=0$) to find a $C_1>0$ such that for each
$y_0\in L^2(\Omega)$,
\begin{align*}
    \int_S^T \|\varPhi(t) y_0 \|_{L^2(\Omega)} dt
    \leq& \int_S^T \|M(t) A^{-2} y_0 \|_{L^2(\Omega)}  dt
    +     \int_S^T   C_1 t^{-2} \|A^{-2} y_0 \|_{L^2(\Omega)}  dt
    \nonumber\\
    \leq& \left(  \int_S^T |M(t)|  dt
    +    C_1   \int_S^T t^{-2} dt \right)  \|A^{-2} y_0 \|_{L^2(\Omega)}.
\end{align*}
Since  $S>0$,
the second inequality in \eqref{202205TJYB-TwoSideObservability}
follows from the  inequality above.
Thus, \eqref{202205TJYB-TwoSideObservability} is proven, i.e., the statement $(ii)$ is true.

Next, we aim to  verify that ($ii$) implies ($i$).
We suppose that the statement $(ii)$ is true, i.e., \eqref{202205TJYB-TwoSideObservability} holds for some $C>0$.
It follows  from Corollary \ref{cor-only-h1} (with $\beta=3$ and $s=0$) that
for some  $C_2>0$,
\begin{align*}
    \| \varPhi(t)y_0 + M(t) A^{-2} y_0 \|_{L^2(\Omega)}
    \leq C_2 t^{-3} \| (-A)^{-3} y_0 \|_{L^2(\Omega)},
    ~y_0 \in L^2(\Omega),~t\in [S,T].
\end{align*}
Then, we   obtain from  \eqref{202205TJYB-TwoSideObservability}
that for each $y_0 \in L^2(\Omega)$,
\begin{align}\label{202210-TJ-YB-estimates-for-testing}
    &  \| A^{-2} y_0\|_{ L^2(\Omega) }
    =\| y_0\|_{ \mathcal H^{-4} }
    \leq C \int_S^T  \| \chi_Q(t,\cdot) \varPhi(t)y_0 \|_{L^2(\Omega)}  dt
    \nonumber\\
    \leq&   C  \|M\|_{C([0,T])}
    \int_S^T  \| \chi_Q(t,\cdot) A^{-2}y_0\|_{L^2(\Omega)}  dt
    +  C C_2 S^{-2}
    \| (-A)^{-3} y_0\|_{L^2(\Omega)}.
\end{align}
By a standard density argument,
we replace $A^{-2} y_0$ by $z$ in \eqref{202210-TJ-YB-estimates-for-testing} to determine  that for some $C_3>0$,
\begin{align}\label{estimate-for-necessary-geometry-by-yb-202210}
    C_3 \| z\|_{ L^2(\Omega) }
    \leq   \int_S^T  \| \chi_Q(t,\cdot) z\|_{L^2(\Omega)}  dt
    +  \| (-A)^{-1} z\|_{L^2(\Omega)}
    \;\;\mbox{for all}\;\; z\in L^2(\Omega).
\end{align}

Now, we will use \eqref{estimate-for-necessary-geometry-by-yb-202210} to derive that
\begin{align}\label{202205tjYB-QSubsetQmoc}
    (Q,S,T) ~\text{satisfies the MOC}.
\end{align}
In what follows, we use $B(x,r)$ to denote the closed ball in $\mathbb{R}^n$, centered at $x$ with radius $r$.
We arbitrarily fix an $x_0\in\Omega$ and set
\begin{align}\label{def-zk-for-geometry-by-yb-202110}
    z_k := |B(x_0,1/k)|^{- \frac{1}{2} }  \chi_{B(x_0,1/k)},~k\in \mathbb N^+.
\end{align}
It is clear that as $k\in \mathbb N^+$ is large,
\begin{align}\label{3.6--2-27}
    \text{supp}\,z_k\subset \Omega,~
    \|z_k\|_{L^2(\Omega)}=1
    ~\text{and}~
    z_k
    \;\rightharpoonup\;
    0~\text{weakly in}~ L^2(\Omega).
\end{align}
The last equality in \eqref{3.6--2-27} implies that $\displaystyle\lim_{k\rightarrow+\infty} (-A)^{-1} z_k=0$ in $L^2(\Omega)$.
This, along with \eqref{estimate-for-necessary-geometry-by-yb-202210} (where $z=z_k$) and the  second equality in \eqref{3.6--2-27}, yields
\begin{align}\label{C2-Lp-ball-by-yb-202110}
    C_3  \leq \limsup_{k\rightarrow+\infty}
    \int_0^T  \| \chi_Q(t,\cdot) z_k\|_{L^2(\Omega)}  dt.
    \end{align}
Applying  H\"{o}lder's inequality to \eqref{C2-Lp-ball-by-yb-202110} leads to
\begin{align*}
    C_3  \leq \limsup_{k\rightarrow+\infty}
    \left[
    \left( \int_S^T 1 dt \right)^{  \frac{1}{2} }
    \cdot
    \left( \int_S^T  \| \chi_Q(t,\cdot) z_k\|_{L^2(\Omega)}^2  dt
    \right)^{ \frac{1}{2} }
    \right].
    \end{align*}
This, along with \eqref{def-zk-for-geometry-by-yb-202110}, implies that
\begin{align*}
    C_3^2  T^{-1}
    \leq
    \limsup_{k\rightarrow+\infty}
    \left(
    \frac{1}{|B(x_0,1/k)|}
    \int_{B(x_0,1/k)} \bigg(\int_S^T\chi_Q(t,x)  dt \bigg) dx
    \right).
\end{align*}
Because $x_0\in \Omega$ was arbitrarily taken, the above inequality gives
\begin{align*}
    \int_S^T \chi_Q(t,x) dt  \geq  C_3^2 T^{-1}
    ~~\text{for a.e.}~  x \in \Omega.
\end{align*}
Thus, \eqref{Q-S-T-bounded-from-below} holds for the current  $(Q,S,T)$, which leads to \eqref{202205tjYB-QSubsetQmoc}, i.e., the statement $(i)$ is true.

\vskip 5pt
The proof of Theorem \ref{202205TJyb-MainTheorem-merged} is now completed.
\end{proof}

 \section{Extension of the main theorem }\label{sec-further-studies}

This section aims to extend Theorem \ref{202205TJyb-MainTheorem-merged} to the case with $S=0$  in the following two directions:
First, we show that the MOC satisfied by $(Q,0,T)$ is still a sufficient and necessary condition to ensure the two-sided observability inequality with a weight $t^\alpha$ where $\alpha>1$ (see Theorem \ref{202210-thm-weighted-observability} below). Second, we show that the MOC satisfied by $(Q,0,T)$ is  a sharp sufficient  condition to ensure the two-sided observability inequality without weight (see Theorem \ref{thm-null-observability-without-weight} below).

 \begin{theorem}\label{202210-thm-weighted-observability}
    Let  $T>0$.
    Then, for each $\alpha>1$ and each nonempty measurable subset $Q\subset (0,+\infty) \times \Omega$, the following two statements are equivalent:
    \begin{itemize}
        \item[(i)] The triplet $(Q,0,T)$ satisfies the MOC.
        \item[(ii)]  There is a constant $C>0$ such that
            \begin{align}\label{202205TJYB-TwoSideObservability-with-weight}
                \frac{1}{C} \| y_0\|_{ \mathcal H^{-4} }
                \leq  \int_0^T  \big\| \chi_Q(t,\cdot) y(t,\cdot;y_0) \big\|_{L^2(\Omega)}
                t^{\alpha}   dt
                \leq  C \| y_0\|_{ \mathcal H^{-4} }
                ~~~\mbox{for all}~ y_0 \in L^2(\Omega).
            \end{align}
    \end{itemize}
Furthermore, when $\alpha\leq 1$, (i) and (ii) are not equivalent if $Q$ contains a set of the form $(0,\varepsilon) \times B_{\varepsilon}$ (where $\varepsilon>0$ is small enough such that   $\Omega$ contains an open ball
   $B_{\varepsilon}$ of radius $\varepsilon$).
\end{theorem}

  \begin{theorem}\label{thm-null-observability-without-weight}
 Let $T>0$ and let $Q\subset (0,+\infty) \times \Omega$ be  a  nonempty measurable  subset. 
Then, for the following statements, the former leads to the latter:
\begin{itemize}
  \item[(i)] The triplet $(Q,0,T)$ satisfies the MOC.
  \item[(ii)] There is a $C>0$ such that
   \begin{align}\label{exact-observability-1003-WithoutWeight}
     \| y_0\|_{ \mathcal H^{-4} }
    \leq  C \| \chi_Q y(\cdot,\cdot;y_0) \|_{ L^1(0,T;L^2(\Omega)) }\;\;\mbox{for all}\;\;y_0 \in L^2(\Omega).
\end{align}
 \item[(iii)]  There is a $C>0$  such that
\begin{align}\label{null-observability-1003-without-weight}
   \| y(T,\cdot;y_0) \|_{L^2(\Omega)}
   \leq C    \| \chi_Q y(\cdot,\cdot;y_0) \|_{ L^1(0,T;L^2(\Omega)) }\;\;\mbox{for all}\;\;y_0 \in L^2(\Omega).
\end{align}

\item[(iv)]  The triplet $(Q,0,T)$ satisfies
    \begin{align}\label{202203-MCCofBall}
      \inf_{x_0\in\Omega}  {\int\hspace{-1.05em}-}_{ B(x_0,r) }
      \bigg( \int_0^T \chi_Q(t,x) dt
      \bigg) dx >0
       ~~\text{for each}~ r>0,
    \end{align}
    where ${\int\hspace{-0.9em}-}_{ B(x_0,r) }$ denotes the average value of the integral over the closed ball  $B(x_0,r)\subset\mathbb R^n$, centered at $x_0$ with radius $r$.
\end{itemize}

\end{theorem}

\begin{remark}\label{202205tjYB-remark-ObWithoutWeight}
 $(i)$  Theorem \ref{202210-thm-weighted-observability}
  shows that the equivalence in Theorem \ref{202205TJyb-MainTheorem-merged} remains true
  for the case with  $S=0$ by   inserting the weight function $t^\alpha$ (with $\alpha>1$) into the integrand
      in \eqref{202205TJYB-TwoSideObservability}.

Theorem \ref{202210-thm-weighted-observability} is mainly motivated by the following facts. First, the wave-like effect in  equation \eqref{our-obserble-system} determines the geometry of the observable set $Q$ and the space $\mathcal H^{-4}$ for the initial data. Second, the condition that $\alpha>1$ is
determined by  the heat-like effect in equation \eqref{our-obserble-system} (see Remark \ref{20221020-TjYb-remark-on-NecessaryWeight} for more explanations).

 $(ii)$ Theorem \ref{thm-null-observability-without-weight}
 provides a sharp sufficient geometric  condition (i.e., $(Q,0,T)$ satisfies the MOC in \eqref{Q-S-T-bounded-from-below})
 ensuring the observability
 inequalities \eqref{exact-observability-1003-WithoutWeight}
 and \eqref{null-observability-1003-without-weight}.

   The gap between $(i)$ and $(iv)$ in this theorem is rather thin for the following reasons (see also Example \ref{example-mcc} for further details on these two conditions):
   first, $(i)$ implies $(iv)$ directly; second, $(i)$ ensures that the  time for almost all characteristic lines to pass through the observation set
   has a positive lower bound (see also $(i)$ of Remark \ref{remark1.1,2-22}); third, $(iv)$ states that the aforementioned time, averaged over any small ball of a fixed radius, has a positive lower  bound.
\end{remark}

\begin{example}\label{example-mcc}
    Here we give concrete examples about the MOC (given by \eqref{Q-S-T-bounded-from-below}) and its average version  \eqref{202203-MCCofBall}. Let $\Omega:=(0,1)$ and $\varepsilon\in(0,1)$. We define the following two subsets in $\mathbb R^+  \times (0,1)$:
    \begin{align*}
        Q_1:=&\Big\{(t,x)\in \mathbb R^+  \times (0,1)
        ~:~   f(x;1) < t <  f(x;1) + \frac{\varepsilon}{2}
        \Big\},
        \nonumber\\
        Q_2:=&\Big\{(t,x)\in \mathbb R^+  \times (0,1)
        ~:~           f(x;1)  < t <  f(x;1+\varepsilon)
        \Big\},
    \end{align*}
    where  for each $a\in \mathbb R$,
    \begin{align*}
        f(x;a):=\begin{cases}
            a x,~x\in(0,1/2),\\
            a(1-x),~x\in[1/2,1).
        \end{cases}
          \end{align*}
    See Figure \ref{202310-yb-ComparationBetweenMOCandAveragedMOC} for their images (where $\varepsilon=0.1$). In this figure, the red set  is $Q_1$ and the blue set  is $Q_2$.
    One can directly check that  $(Q_1,0,1)$ satisfies both the MOC and the condition \eqref{202203-MCCofBall}, and $(Q_2,0,1)$ satisfies the condition \eqref{202203-MCCofBall} but not the MOC.
    \begin{figure}
        \centering
        \includegraphics[height=6cm,width=12cm]{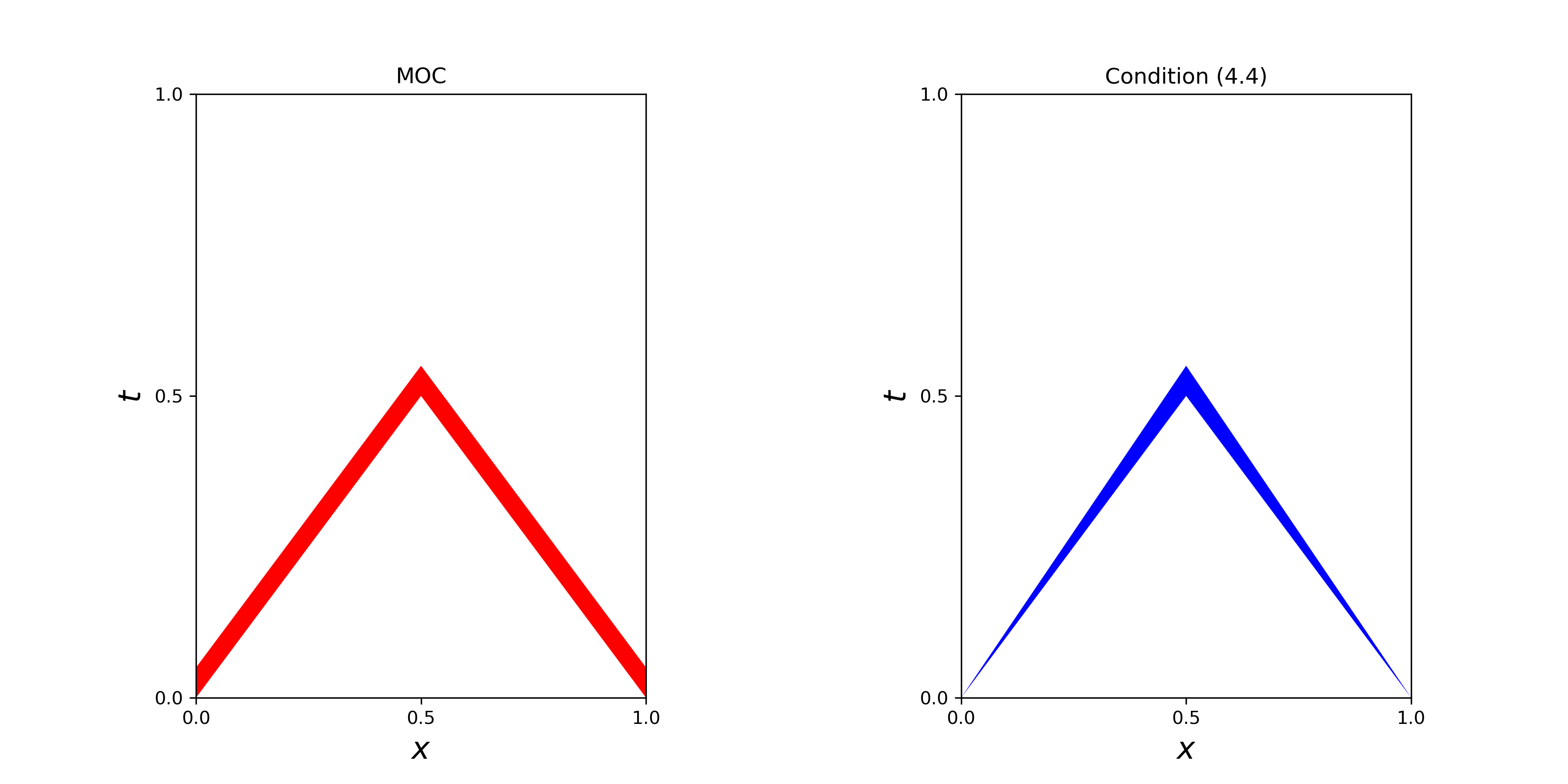}
        \caption{The red and blue sets correspond to the MOC and the condition \eqref{202203-MCCofBall}, respectively.}
        \label{202310-yb-ComparationBetweenMOCandAveragedMOC}
    \end{figure}
\end{example}
\color{black}

   The proof of  Theorem \ref{202210-thm-weighted-observability}
needs the following proposition
 whose proof is put in Section \ref{Appendix-SeveralProofs} in the Appendix.

\begin{proposition}\label{202202yb-prop-DoubleSidesOb-NecessaryWeights}
      Let $Q\supset (0,\varepsilon_0)\times\omega$ be  measurable, where $\varepsilon_0>0$ and $\omega\subset\Omega$ is a nonempty  open subset. Let $\alpha\in \mathbb R$.  Assume that the second inequality in \eqref{202205TJYB-TwoSideObservability-with-weight} holds for the above $Q$, i.e.,
     there is a constant $C>0$ such that
    \begin{align}\label{202210TJYB-TwoSideObservability-necessary-weight}
           \int_0^T  \big\| \chi_Q(t,\cdot) y(t,\cdot;y_0) \big\|_{L^2(\Omega)}
        t^{\alpha}   dt
        \leq  C \| y_0\|_{ \mathcal H^{-4} }
        ~~\mbox{for all}\;\;y_0 \in L^2(\Omega).
    \end{align}
      Then, it holds that $\alpha>1$.
\end{proposition}

\begin{remark}\label{20221020-TjYb-remark-on-NecessaryWeight}
The condition $\alpha>1$ in
Proposition \ref{202202yb-prop-DoubleSidesOb-NecessaryWeights}
is essentially determined by the heat-like effect in equation \eqref{our-obserble-system}. In fact, generally speaking, we cannot obtain that the pure heat solution $ e^{t A} y_0$ ($t>0$) is in the space $L^1(0,T;L^2(\Omega))$ from the fact that  $y_0 \in \mathcal H^{-4}$, unless this solution is multiplied by a weight $t^{\alpha}$ with $\alpha>1$. The same can be said about the solution $\varPhi(t)y_0$ ($t>0$),  due to its heat-like nature. This is the essential idea in the proof of Proposition \ref{202202yb-prop-DoubleSidesOb-NecessaryWeights}.

At last, $\alpha=1$ is critical because $\int_0^T e^{-\lambda t} t^{\alpha} dt \sim \Gamma(\alpha+1) \lambda^{-\alpha-1}$ as $\lambda \rightarrow +\infty$  when $\alpha\geq 0$ (here $\Gamma(\cdot)$ is the Gamma function defined by Euler's integral of second kind), and we have
\begin{align*}
    \int_0^T e^{tA} y_0  \, t^{\alpha} dt
    ~\sim~  \Gamma(\alpha+1)   \,   (-A)^{-\alpha-1} y_0
\end{align*}
for any $y_0$ involving only high frequency spectral components. Then $\alpha=1$ is crucial when the above function $(-A)^{-\alpha-1} y_0$ is compared to the recovered term $(-A)^{-2} y_0$ in \eqref{202205TJYB-TwoSideObservability-with-weight}. A similar idea can be applied to the integral
$\int_0^T \| \chi_Q e^{tA} y_0 \|_{L^2(\Omega)} t^{\alpha} dt$,
 as it is actually done in the proof of Proposition \ref{202202yb-prop-DoubleSidesOb-NecessaryWeights}.
\end{remark}

We are now in the position to prove Theorem \ref{202210-thm-weighted-observability}.

\begin{proof}[Proof of Theorem \ref{202210-thm-weighted-observability}]
     The proof is divided into the following two parts.

     \vskip 5pt
     \noindent \textit{\textbf{Part 1.} We prove $(i)\Leftrightarrow(ii)$ when $\alpha>1$.}

     Let  $\alpha>1$. We first  prove that $(i)\Rightarrow(ii)$. For this purpose, we suppose that $(i)$ is true, i.e., $(Q,0,T)$ satisfies the MOC.
    Then, from  \eqref{Q-S-T-bounded-from-below}
   it follows that
    \begin{align*}
        \varepsilon_0:= \frac{1}{2} \underset{x\in\Omega}{\text{ess-inf}} \int_0^T \chi_Q(t,x) dt\in (0,T).
    \end{align*}
    This implies that for a.e. $x\in \Omega$,
    \begin{align*}
        \int_{\varepsilon_0}^T \chi_Q(t,x) dt
        \geq \int_{0}^T \chi_Q(t,x) dt - \varepsilon_0
        \geq \frac{1}{2} \underset{x\in\Omega}{\text{ess-inf}} \int_0^T \chi_Q(t,x) dt
        >0,
    \end{align*}
    which shows that   $(Q,\varepsilon_0,T)$ also satisfies the MOC.
    Then, by Theorem \ref{thm-QST-by-yb-202110},  we obtain  \eqref{exact-ob-twosides-by-yb-202110} with $S=\varepsilon_0$. Meanwhile,
    it is clear that for each  $y_0\in L^2(\Omega)$,
    \begin{align*}
        \int_{\varepsilon_0}^{T}
        \| \chi_Q(t,\cdot) \varPhi(t) y_0 \|_{L^2(\Omega)} 	 dt
        \leq            \varepsilon_0^{ -\alpha }
        \int_{\varepsilon_0}^{T}
        \| \chi_Q(t,\cdot) \varPhi(t) y_0 \|_{L^2(\Omega)} t^{\alpha}	 dt.
        %\;\;\mbox{for all}\;\;y_0\in L^2(\Omega).
    \end{align*}
    This, along with   \eqref{exact-ob-twosides-by-yb-202110} (where $S=\varepsilon_0$),   leads to
    the first inequality in \eqref{202205TJYB-TwoSideObservability-with-weight}.
    To show the second inequality in \eqref{202205TJYB-TwoSideObservability-with-weight},
    we  apply  Corollary \ref{cor-only-h1} (with $\beta=2$ and $s=0$) to find a $C_1>0$ such that for each
    $y_0\in L^2(\Omega)$,
    \begin{align*}
        \int_0^T \|\varPhi(t) y_0 \|_{L^2(\Omega)} t^{\alpha}dt
        \leq& \int_0^T \|M(t) A^{-2} y_0 \|_{L^2(\Omega)} t^{\alpha} dt
        +     \int_0^T   C_1 t^{-2} \|A^{-2} y_0 \|_{L^2(\Omega)}  t^{\alpha} dt
        \nonumber\\
        \leq& \left(  \int_0^T |M(t)| t^{\alpha} dt
        +    C_1   \int_0^T t^{\alpha-2} dt \right)  \|A^{-2} y_0 \|_{L^2(\Omega)}.
    \end{align*}
    Since  $\alpha>1$,
    the second inequality in \eqref{202205TJYB-TwoSideObservability-with-weight}
    follows from the  inequality above.
    In conclusion, \eqref{202205TJYB-TwoSideObservability-with-weight} is proven, i.e., the statement $(ii)$ is true.

   Next, we aim to show that $(ii)\Rightarrow(i)$.   To this end, we assume that $(ii)$ is true, i.e., \eqref{202205TJYB-TwoSideObservability-with-weight} holds for some $C>0$.
    Set
    \begin{align}\label{beta-range-for-thm1-by-yb-202110}
        \beta_0:=\min \Big\{2+ (\alpha-1)/2,3  \Big\}
        \in (2,3]\cap (2,\alpha+1 )
    \end{align}
    (here, the fact that $\alpha>1$ is used). It follows  from Corollary \ref{cor-only-h1} (with $\beta=\beta_0$ and $s=0$) that
    for some  $C_2>0$,
    \begin{align}\label{varPhi-M(t)-remainder-by-by-202110}
        \| \varPhi(t)y_0 + M(t) A^{-2} y_0 \|_{L^2(\Omega)}
        \leq C_2 t^{-\beta_0} \| (-A)^{-\beta_0} y_0 \|_{L^2(\Omega)},
        ~y_0 \in L^2(\Omega),~t\in(0,T].
    \end{align}
    Because $2<\beta_0<\alpha+1$ (see \eqref{beta-range-for-thm1-by-yb-202110}), we   obtain from  \eqref{202205TJYB-TwoSideObservability-with-weight}  and \eqref{varPhi-M(t)-remainder-by-by-202110}
    that for each $y_0 \in L^2(\Omega)$,
    \begin{align*}%\label{estimates-for-testing-by-yb-202110}
        &  \| A^{-2} y_0\|_{ L^2(\Omega) }
        =\| y_0\|_{ \mathcal H^{-4} }
        \leq C \int_0^T  \| \chi_Q(t,\cdot) \varPhi(t)y_0 \|_{L^2(\Omega)}
        t^{\alpha} dt
        \nonumber\\
        \leq&   C T^{ \alpha } \|M\|_{C([0,T])}
        \int_0^T  \| \chi_Q(t,\cdot) A^{-2}y_0\|_{L^2(\Omega)}  dt
        +  C C_2 \frac{ T^{ \alpha+1-\beta_0 } }{\alpha+1-\beta_0 }
        \| (-A)^{-\beta_0} y_0\|_{L^2(\Omega)}.
    \end{align*}
    Then, by a standard density argument,
    we obtain that for some $C_3,\beta>0$,
    \begin{align}\label{estimate-for-necessary-geometry-by-yb-202110}
        C_3 \| z\|_{ L^2(\Omega) }
        \leq   \int_0^T  \| \chi_Q(t,\cdot) z\|_{L^2(\Omega)}  dt
        +  \| (-A)^{-\beta} z\|_{L^2(\Omega)}
        \;\;\mbox{for all}\;\; z\in L^2(\Omega).
    \end{align}
   Now, we observe that \eqref{estimate-for-necessary-geometry-by-yb-202110} is similar to \eqref{estimate-for-necessary-geometry-by-yb-202210}. In a similar way as we used \eqref{estimate-for-necessary-geometry-by-yb-202210} to prove \eqref{202205tjYB-QSubsetQmoc}, we can deduce from  \eqref{estimate-for-necessary-geometry-by-yb-202110}  that \eqref{202205tjYB-QSubsetQmoc} holds with $S=0$  (i.e., $(Q,0,T)$ satisfies the MOC). Thus, the statement $(i)$ is proven.

    \vskip 5pt
    In conclusion, when $\alpha>1$, the equivalence between $(i)$ and $(ii)$ is proven.

\vskip 5pt
\noindent \textit{\textbf{Part 2.} We prove that when $\alpha\leq 1$,
    (i)  and (ii) are not equivalent if $Q$ contains a set of the form $ (0,\varepsilon) \times B_{\varepsilon} $, where $B_{\varepsilon}\subset \Omega$ is  a small ball.
}

Let $\alpha\leq 1$.  First of all, the statement $(ii)$ can not hold. Otherwise, since $Q \supset (0,\varepsilon) \times B_{\varepsilon} $, by the second inequality in $(ii)$, we can apply  Proposition \ref{202202yb-prop-DoubleSidesOb-NecessaryWeights} to determine that $\alpha > 1$, which leads to a contradiction (since it was assumed that $\alpha \leq 1$).

Next, when $(Q,0,T)$ satisfies the MOC and $Q$ contains the form $ (0,\varepsilon) \times B_{\varepsilon} $ (the existence of such $Q$ is easily guaranteed), the statement $(i)$ holds but the statement $(ii)$ does not. Thus, $(i)$  and $(ii)$ are not equivalent if $Q$ contains the form $ (0,\varepsilon) \times B_{\varepsilon} $.

    \vskip 5pt
    In conclusion,  we have completed   the proof of  Theorem \ref{202210-thm-weighted-observability}.
\end{proof}

 Next, we will  prove Theorem \ref{thm-null-observability-without-weight}.

\begin{proof}[Proof of Theorem \ref{thm-null-observability-without-weight}]
    We arbitrarily fix a nonempty measurable subset $Q\subset \mathbb{R}^+\times \Omega$. We organize
    the proof in several steps.

    \vskip 5pt
   \noindent {\it Step 1.  We show $(i)\Rightarrow(ii)$.}

 We assume that $(i)$ holds. Then, by Theorem \ref{202210-thm-weighted-observability}, we have \eqref{202205TJYB-TwoSideObservability-with-weight} with  $\alpha=2$.
 This, along with  the fact that $\displaystyle\sup_{0<t<T} t^{\alpha} =T^{\alpha}$, yields  $(ii)$ of Theorem \ref{thm-null-observability-without-weight}.

\vskip 5pt
  \noindent  {\it Step 2.  We show $(ii)\Rightarrow(iii)$.}

  According to Corollary \ref{cor-only-h1} (with $\beta=2$, $s=0$, and $t=T$), there is a $C_1>0$ such that
\begin{align*}
  \| \varPhi(T) y_0 \|_{L^2(\Omega)}
  \leq C_1 \| A^{-2} y_0 \|_{L^2(\Omega)}
  = C_1 \| y_0 \|_{ \mathcal H^{-4} }.
\end{align*}
This, along with  $(ii)$ of this theorem, implies $(iii)$ of this theorem.

\vskip 5pt
 \noindent{\it Step 3.  We show $(iii)\Rightarrow(iv)$.}

 Let $(iii)$ of this theorem hold.
    By contradiction, we suppose that
  $(iv)$ is not true. Then,
   there is an $r_1>0$ such that %\eqref{202203-MCCofBall}
 \begin{align}\label{3.8-3-20}
      \inf_{x_1\in\Omega}  {\int\hspace{-1.05em}-}_{ B(x_1,r_1) }
      \bigg( \int_0^T \chi_Q(t,x) dt
      \bigg) dx =0.
 \end{align}
 We define the following function over $\overline{\Omega}$ via
 \begin{align*}
 F(x_1):= {\int\hspace{-1.05em}-}_{ B(x_1,r_1) }
      \bigg( \int_0^T \chi_Q(t,x) dt
      \bigg) dx,\;\;x_1\in  \overline{\Omega}.
 \end{align*}
 We can directly check that $F$ is a non-negative and continuous function
 over $\overline{\Omega}$.
 Then, by \eqref{3.8-3-20}, there is a minimizer $\hat x_1\in\overline{\Omega}$  such that
 $F(\hat x_1)=0$. From this,
  we can easily show that
  there is an $x_0 \in  \Omega$  and an $r>0$ such that
\begin{eqnarray}\label{sharp-geometry-pf-1}
\chi_Q(t,x) =0 ~~\text{for a.e.}~
(t,x) \in (0,T) \times B(x_0, r).
%	Q \cap \big( [0,T]\times\Omega \big)
%	\subset  [0,T] \times \big( \Omega\setminus B(x_0,r) \big).
\end{eqnarray}
 At the same time, we can apply $(iii)$ (in this theorem) to find  some $C>0$ such that
\begin{align*}
 \|\varPhi(T)y_0\|_{L^2(\Omega) }
 \leq C \| \chi_Q \varPhi(\cdot)y_0 \|_{ L^\infty(0,T;L^2( \Omega)) }
 \;\;\mbox{for each}\;\; y_0\in L^2(\Omega).
\end{align*}
This, together with  \eqref{sharp-geometry-pf-1}, yields
		\begin{eqnarray}\label{sharp-geometry-pf-s1-2}
		\|\varPhi(T)z\|_{L^2(\Omega) }
		\leq  C \| \varPhi(\cdot)z\|_{ L^\infty(0,T;L^2( \Omega\setminus B(x_0,r) )) }\;\;\mbox{for each}\;\; z\in L^2(\Omega).
		\end{eqnarray}

	Now, we will  present a contradiction to \eqref{sharp-geometry-pf-s1-2} by choosing a suitable sequence $\{z_k\}_{k\geq 1}$ in $L^2(\Omega)$.
	According to
 Theorem \ref{cor-0423-demcomposition} (in the Appendix),  $\{h_l(T)\}_{l\geq 1}$ (given by (\ref{thm-ODE-meomery-asymptotic-estimate-hypobolic}))
is not the zero sequence. Thus, we have
	\begin{align}\label{hj-nonzero}
	J := \min\Big\{ l\in \mathbb N^+~:~h_l(T)\neq 0 \Big\} < +\infty.
	\end{align}
	We select $\{\varepsilon_k\}_{k\geq 1}\subset (0,r/2)$
	and $\rho\in C_0^\infty(\mathbb R^n)$ such that
	\begin{eqnarray}\label{sharp-geometry-pf-3}
	\lim_{k\rightarrow\infty} \varepsilon_k=0;\;\;\;\;
	\| \rho \|_{L^2(\mathbb R^n)}=1;\;\;\;\;
	\rho(x)=0,~|x|_{\mathbb R^n}\geq 1.
	\end{eqnarray}
	We define $\{w_k\}_{k\geq 1} \subset C_0^\infty(\Omega)$ by
	\begin{eqnarray*}\label{sharp-geometry-pf-s1-4}
		w_k (x) := \varepsilon_k^{-n/2} \rho
		\Big(\frac{x-x_0}{\varepsilon_k} \Big),~x\in \Omega.
	\end{eqnarray*}
	From this and (\ref{sharp-geometry-pf-3}), we can directly check that
	\begin{eqnarray}\label{sharp-geometry-pf-s1-6}
	\mbox{supp}\, w_k \subset  B(x_0,r/2), ~\forall\; k\in \mathbb N^+;\;\|w_k\|_{L^2(\Omega)}=1,  ~\forall\; k\in \mathbb N^+;\;
	\mbox{w-}\lim_{k\rightarrow\infty} w_k=0
	\;\mbox{in}\;
	L^2(\Omega).
	\end{eqnarray}
	Now we define the sequence $\{z_k\}_{k\geq 1}\subset L^2(\Omega)$  by
	\begin{align}\label{definition of zk,7.26}
	z_k:= A^{J+1} w_k,\;\;k\geq 1.
	\end{align}
	From \eqref{definition of zk,7.26} and \eqref{sharp-geometry-pf-s1-6}, as well as \eqref{selfadjoit-elliptic-operator},  we see
	that for each $l\in\{0,1,\ldots,J+1\}$,
	\begin{eqnarray}\label{sharp-geometry-pf-s1-4-2-11}
	\mbox{supp}\,(A^{-l}  z_k)
	\subset B(x_0,r/2),~~\forall\, k\in \mathbb N^+;
	\;\;\;\;
	\lim_{k\rightarrow\infty}    z_k=0
	\;\;\mbox{in}\;\;
	\mathcal H^{-2J-4}.
	\end{eqnarray}
	Meanwhile,  by Theorem \ref{cor-0423-demcomposition}  (in the Appendix) with $N=J+1$, we have
	\begin{align}\label{varPhi-j+1-expression}
	\varPhi(t)=& e^{tA} \Big(Id + \sum_{l=0}^J p_l(t) (-A)^{-l-1} \Big)
	+ \sum_{l=1}^J h_l(t) (-A)^{-l-1} +  R_{J+1}(t,-A) (-A)^{-J-2}
	\nonumber\\
	:=& \mathcal P(t)  +  \mathcal W(t) + \widetilde{\mathcal R}(t),\;\;t>0,
	\end{align}
	where  $R_{J+1}(\cdot,-A) \in C( \mathbb R^+; \mathcal L(\mathcal H^0) )\bigcap L^\infty_{loc}(\overline{\mathbb R^+}; \mathcal L( \mathcal H^0))$ is given in Theorem \ref{cor-0423-demcomposition} with $N=J+1$.

With regard to three terms on the right hand side of \eqref{varPhi-j+1-expression}, we have the following:
	First, from the second conclusion in \eqref{sharp-geometry-pf-s1-4-2-11}, as well as the regularity of
	$R_{J+1}(\cdot,-A)$,  we find that
	\begin{align}\label{remainder-zk-0}
	\lim_{k\rightarrow\infty}  \sup_{0< t\leq T}
	\|\widetilde{\mathcal R} (t) z_k\|_{ L^2(\Omega) }
	=0.
	\end{align}
	Second, by the smoothing effect of $\{e^{tA}\}_{t\geq 0}$, we determine that
	\begin{align}\label{5.30,7.26w}
	\lim_{k\rightarrow\infty}  \mathcal P(T) z_k
	=0 \;\;\mbox{in}\;\;  L^2(\Omega).
	\end{align}
	Third,  by \eqref{hj-nonzero}, we have
	\begin{align*}
	\mathcal W(T) = h_J(T) (-A)^{-J-1},
	\end{align*}
	which, along with \eqref{definition of zk,7.26} and the second equality in \eqref{sharp-geometry-pf-s1-6},
	yields
	\begin{align}\label{5.31,7.26w}
	\| \mathcal W(T)z_k \|_{ L^2(\Omega) }=|h_J(T)|\neq0,~~\forall\,k\in \mathbb N^+.
	\end{align}
	Now from  \eqref{varPhi-j+1-expression}, \eqref{remainder-zk-0}, \eqref{5.30,7.26w},
	and \eqref{5.31,7.26w}, it follows that
	\begin{eqnarray}\label{sharp-geometry-T-zk-0}
	\lim_{k\rightarrow\infty} \|\varPhi(T)z_k\|_{L^2(\Omega)}
	= |h_J(T)|\neq 0.
	\end{eqnarray}

	Finally,    by \eqref{sharp-geometry-pf-s1-4-2-11} and the iterative use of Lemma \ref{lem-heat-up-to-0} (with $z=A^{-l}z_k$, $l\in\{0,\cdots,J+1\}$), we find that
	\begin{eqnarray}\label{P-outside-0}
	\lim_{k\rightarrow\infty} \sup_{0< t\leq T} \| \mathcal P(t)  z_k\|_{L^2(\Omega\setminus B(x_0,r))}
	=0.
	\end{eqnarray}
	Meanwhile, from the first conclusion in \eqref{sharp-geometry-pf-s1-4-2-11} and the definition of $\mathcal W(t)$ (see \eqref{varPhi-j+1-expression}), we see  that for each $k\in \mathbb N^+$,
	\begin{align}\label{5.34,7.26w}
	\mathcal W(t)z_k=0 \;\;\mbox{over}\;\;
	\Omega\setminus B(x_0,r),~~0\leq t\leq T.
	\end{align}
	From \eqref{varPhi-j+1-expression}, \eqref{P-outside-0}, \eqref{5.34,7.26w}, and \eqref{remainder-zk-0},
	we obtain
	\begin{eqnarray}\label{sharp-geometry-pf-s1-5}
	\lim_{k\rightarrow\infty}  \sup_{0< t\leq T} \| \varPhi(t)z_k \|_{L^2(\Omega\setminus B(x_0,r))}
	=0.
	\end{eqnarray}
	Now, the combination of \eqref{sharp-geometry-pf-s1-5}  and \eqref{sharp-geometry-T-zk-0}
	contradicts  (\ref{sharp-geometry-pf-s1-2}). Thus, $(iv)$  is true.

\vskip 5pt
	Finally, the proof of  Theorem \ref{thm-null-observability-without-weight} is completed.
\end{proof}

\section{Applications to control problems}\label{sec-controllability}

In this section, we denote by $Q\subset \mathbb{R}^+\times\Omega$  a nonempty measurable subset of positive measure that will play the role of support of the control, and let $p\in[1,+\infty]$.
We consider
the following  controlled heat equation with  memory:
\begin{eqnarray}\label{our-system}
\left\{
\begin{array}{ll}
\partial_t y(t,x)  - \Delta y(t,x)
+ \displaystyle\int_0^t M(t-s) y(s,x)ds =\chi_Q(t,x) u(t,x),
~&(t,x)\in \mathbb R^+ \times \Omega,\\
y(t,x)=0,~&(t,x)\in \mathbb R^+\times \partial\Omega,  \\
y(0,x)=y_0(x),~&\quad~~ x \in \Omega,
\end{array}
\right.
\end{eqnarray}
where  $y_0\in L^2(\Omega)$ and $u\in L^p(\mathbb{R}^+;L^2(\Omega))$.  We treat the solution of the control system \eqref{our-system} as a function from $[0,+\infty)$ to $L^2(\Omega)$ and denote it by  $y(\cdot;y_0,u)$.

Inspired by  the classical null controllability property of semigroups,
it is natural to address the following controllability problem: Given a $T>0$,  for each $y_0 \in L^2(\Omega)$, show  the existence of a control $u\in L^2(0,+\infty; L^2(\Omega))$, with $u=0$ over $(T,+\infty)$, such that
\begin{align}\label{20231013-yb-LongTimeEquivibrillum}
    y(t;y_0,u) = 0 ~~\text{for each}~ t\geq T.
\end{align}
We refer to \eqref{20231013-yb-LongTimeEquivibrillum} as the memory-type controllability property.

We were not able to solve this problem so far, and we turn our attention to the weaker goal of controlling   the state at time $t=T$, i.e.,
\begin{equation}\label{partial-control}
    y(T;y_0,u) = 0,
\end{equation}  
instead of the whole trajectory for $t \ge T$, i.e.,  $y(\cdot;y_0,u)|_{[T,+\infty)}$.

This, i.e., \eqref{partial-control},  constitutes a partial controllability problem. Indeed, even if
$y(T;y_0,u)\equiv 0$, due to the memory effects of the system, \eqref{20231013-yb-LongTimeEquivibrillum} will not be guaranteed.
This is the main difference and added difficulty of the control of heat-like equations involving memory terms.

In the sequel we limit the discussion to the partial controllability \eqref{partial-control} of $y(T;y_0,u)$. The analysis of the full control of $y(\cdot;y_0,u)|_{[T,+\infty)}$ constitutes an interesting open problem. Note that the methods in \cite{Chaves-Silva-Zhang-Zuazua}, imposing stronger  geometric conditions on the control sets and restricting the analysis to specific kernels, like polynomial ones, in particular, allow to ensure the full control of the system. Whether these results can be extended to  general analytic kernels under the sharp MOC of this paper is an interesting open problem. We will further discuss this issue in the next section.

In what follows, we present several  applications of  Theorem \ref{202205TJyb-MainTheorem-merged} (as well as Theorems \ref{202210-thm-weighted-observability} and \ref{thm-null-observability-without-weight})
to the control system \eqref{our-system}.
\subsection{Main results}

To state the first theorem, we introduce the following definitions:
\begin{itemize}[leftmargin=4em]
\item[(D3)] Given  $T>0$ and $y_0\in L^2(\Omega)$, let
\begin{align}\label{reachable-subspace-M}
	\mathcal R_M^p(T,y_0) := \big\{ y(T;y_0,u) ~:~
	u\in L^{p}(\mathbb R^+;L^2(\Omega))
	\big\}
	\end{align}
be the
	reachable set for the control system (\ref{our-system}) at time $T$.

\item[(D4)] Given  $T>0$ and $y_0\in L^2(\Omega)$, let
\begin{align*}
	\mathcal R_0^p(T,y_0) := \big\{ z(T;y_0,u) ~:~
	u\in L^{p}(\mathbb R^+;L^2(\Omega))
	\big\}
	\end{align*}
be the reachable set for the pure heat equation at time $T$, where $z(\cdot;y_0,u)$ is the  solution of the system  \eqref{our-system} with $M=0$.
\end{itemize}

The
first theorem refers to the  reachable set of the control system  \eqref{our-system}. It shows that, under the MOC, the reachable set of the control system \eqref{our-system} is the sum of the space $\mathcal H^4$ and the reachable set for the pure heat equation.

\begin{theorem}\label{thm-exact-controllability}
	Let $T>0$, $p\in[1,+\infty]$, and $y_0\in L^2(\Omega)$.	We assume that $(Q,0,T)$ satisfies the MOC.
 Then,
	\begin{align}\label{exact-ob-with-H8H6-target}
	\mathcal H^4 \subset \mathcal R_M^p(T,y_0) = \mathcal R_0^p(T,y_0) + \mathcal H^4.
	\end{align}
Moreover, for each $\alpha>1$,
\begin{align}\label{202202yb-H4-WeightedReachableSet}
  \mathcal H^4 =  \mathcal R_M^{\infty}(T,y_0,\alpha) :=
  \Big\{
      y(T;y_0,u)\in L^2(\Omega)~:~
      \underset{0<t<T}{\text{ess-sup}} \,
      \|(T-t)^{-\alpha} u(t)\|_{L^2(\Omega)}
      <+\infty
            \Big\}.
\end{align}
In particular, for every $y_0\in L^2(\Omega)$ and target $y_1\in  \mathcal H^4$, 
    there is a control $u\in L^{\infty}(\mathbb{R}^+;L^2(\Omega))$
    (with supp\,$u\subset[0,T]\times\Omega$)
    such that $y(T;y_0,u)=y_1$.
\end{theorem}

Furthermore, the MOC satisfied by $(Q,0,T)$ is sharp to ensure the structure \eqref{exact-ob-with-H8H6-target}. The following theorem, provides a necessary geometric condition on the control region for the partial controllability of the control system \eqref{our-system} to hold. The gap between the  MOC satisfied by $(Q,0,T)$ and this necessary condition is thin, as discussed in Remark \ref{202205tjYB-remark-ObWithoutWeight}.

\begin{theorem}\label{thm-necessity-geometry}
Let  $T>0$ and $p\in[1, +\infty]$.
We assume that the system \eqref{our-system} is
partially controllable over $[0,T]$ with $L^p$ controls
(i.e., for each $y_0\in L^2(\Omega)$, there is a $u\in L^p(\mathbb R^+; L^2(\Omega))$ such that $y(T;y_0,u)=0$). Then, the condition \eqref{202203-MCCofBall} holds.
\end{theorem}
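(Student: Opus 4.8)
The plan is to deduce the theorem from Theorem~\ref{thm-null-observability-without-weight} via the classical duality between null controllability and observability. First I would recall that $L^p$-null controllability of \eqref{our-system} over $[0,T]$ (for any fixed $p\in[1,+\infty]$) is equivalent, by a standard functional-analytic duality argument (Hahn--Banach / closed range), to an observability inequality for the adjoint system. The subtle point here is that the memory term in \eqref{our-system} is not of convolution type in the usual "backward" sense, so the adjoint is not simply \eqref{our-obserble-system} run backward in time. However, one can avoid computing the adjoint explicitly: null controllability to reach $0$ at time $T$ from any $y_0\in L^2(\Omega)$ means that the reachable set from $0$ contains $-\,\varPhi(T)y_0$ for every $y_0$, i.e.\ the range of the control-to-state map at time $T$ contains the range of $\varPhi(T)$. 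By the closed range theorem, this is equivalent to the estimate
\begin{align}\label{eq:dual-pf}
  \|\varPhi(T)^* \varphi\|_{?} \leq C\, \big\| \chi_Q\, \Psi(\cdot)\varphi \big\|_{L^{p'}(0,T;L^2(\Omega))}
\end{align}
for the appropriate adjoint flow $\Psi$; what matters for the proof is only the \emph{weaker} consequence that such an observability-type inequality forces a geometric lower bound on $Q$.

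Rather than chase the exact adjoint, I would argue by contradiction directly at the level of the control problem, mirroring {\it Step 3} of the proof of Theorem~\ref{thm-null-observability-without-weight}. Suppose \eqref{202203-MCCofBall} fails. Then, exactly as in that {\it Step 3} (using continuity of the averaged function $F$ on $\overline\Omega$ and compactness), there exist $x_0\in\Omega$ and $r>0$ such that $\chi_Q(t,x)=0$ for a.e.\ $(t,x)\in(0,T)\times B(x_0,r)$, so that the control never acts on the ball $B(x_0,r)$ during $[0,T]$. Now I would run the same test-function construction as in that proof: pick $w_k\in C_0^\infty(B(x_0,r/2))$ with $\|w_k\|_{L^2}=1$, $w_k\rightharpoonup 0$, set $z_k:=A^{J+1}w_k$ with $J=\min\{l\in\mathbb N^+:h_l(T)\neq0\}$, and consider the free solutions $\varphi_k(t):=\varPhi(t)z_k$. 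By the decomposition \eqref{varPhi-j+1-expression} together with \eqref{remainder-zk-0}, \eqref{5.30,7.26w}, \eqref{5.31,7.26w}, one gets $\|\varPhi(T)z_k\|_{L^2(\Omega)}\to|h_J(T)|\neq0$, while by Lemma~\ref{lem-heat-up-to-0} and the finite speed of the wave-like part (equations \eqref{P-outside-0}, \eqref{5.34,7.26w}, \eqref{sharp-geometry-pf-s1-5}) one gets $\sup_{[0,T]}\|\varPhi(t)z_k\|_{L^2(\Omega\setminus B(x_0,r))}\to 0$.

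The final step is to convert this into a contradiction with null controllability. Null controllability over $[0,T]$ with controls supported in $Q\subset(\mathbb R^+\times\Omega)\setminus((0,T)\times B(x_0,r))$ yields, by duality, an observability inequality that in particular controls $\|\varPhi(T)z\|_{L^2(\Omega)}$ by an $L^{p'}(0,T;L^2)$-type norm of the free trajectory restricted to the complement of $(0,T)\times B(x_0,r)$ (this is where I use that the adjoint trajectory agrees with $\varPhi(\cdot)z$ away from the memory's effect on the control support --- more carefully, one uses that the duality pairing only ever ``sees'' the solution on the control region, which misses $B(x_0,r)$). Applying this to $z=z_k$ and letting $k\to\infty$ forces $|h_J(T)|\le 0$, a contradiction. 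Hence \eqref{202203-MCCofBall} holds. The main obstacle I anticipate is making the duality step rigorous for the non-self-adjoint memory equation, i.e.\ identifying precisely the adjoint flow and checking that the observation in the dual inequality is indeed the restriction of the free trajectory to $\Omega\setminus B(x_0,r)$ on $[0,T]$; once that is pinned down, the construction above is essentially a transcription of {\it Step 3} of the proof of Theorem~\ref{thm-null-observability-without-weight}.
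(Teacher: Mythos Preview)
Your approach is essentially the paper's: assume \eqref{202203-MCCofBall} fails, locate a ball $B(x_0,r)$ with $\chi_Q=0$ on $(0,T)\times B(x_0,r)$, pass from $L^p$-null controllability to an observability inequality by duality, and then contradict it with the sequence $z_k=A^{J+1}w_k$ exactly as in {\it Step 3} of the proof of Theorem~\ref{thm-null-observability-without-weight}. This is correct.

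Your only hesitation---the ``non-self-adjoint memory equation'' and the identification of the adjoint flow---is misplaced, and the paper resolves it cleanly with two ingredients you already have available. First, Proposition~\ref{prop-varPhi-expression} gives $\varPhi(t)^*=\varPhi(t)$ on $L^2(\Omega)$: the flow \emph{is} self-adjoint, even though the equation has memory. Second, the variation-of-constants formula (Proposition~\ref{prop-constant-variation}) reads $y(T;y_0,u)=\varPhi(T)y_0+\int_0^T\varPhi(T-s)(\chi_Q u)(s)\,ds$, so the control-to-state map $L_T u:=y(T;0,\tilde u)$ has adjoint $(L_T^*z)(s)=\chi_Q(s,\cdot)\varPhi(T-s)z$ directly. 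Null controllability says $\mathrm{Range}\,\varPhi(T)\subset\mathrm{Range}\,L_T$; a closed-graph argument upgrades this to a uniform cost bound, and then standard duality yields
\[
\|\varPhi(T)z\|_{L^2(\Omega)}\leq C\,\|\chi_Q\,\varPhi(T-\cdot)z\|_{L^{p'}(0,T;L^2(\Omega))}\leq C\,\|\varPhi(T-\cdot)z\|_{L^{\infty}(0,T;L^2(\Omega\setminus B(x_0,r)))},
\]
which after the substitution $t\mapsto T-t$ is exactly \eqref{sharp-geometry-pf-s1-2}. No separate ``adjoint memory equation'' needs to be written down.
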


\begin{remark}
   $(i)$ Theorem \ref{thm-exact-controllability} presents the exact difference between
  the reachable set of the control system (\ref{our-system}) with and without memory, and clarifies the role of the space $\mathcal H^4$.

$(ii)$ As mentioned above, Theorem \ref{thm-exact-controllability} ensures only the partial controllability of the system \eqref{our-system}. In that sense the result is weaker than the ones in    \cite{Chaves-Silva-Zhang-Zuazua} about the memory-type null controllability of the system where the memory term is also controlled.

 But in Theorem \ref{thm-exact-controllability} the control  region $Q$ is measurable, while in most related works, it is required to be open.  We refer to    \cite{Luis-wang} for
   the null controllability of the heat equation with  measurable control regions.

   $(iii)$ Theorem \ref{thm-necessity-geometry} shows that the
   MOC is nearly sharp for the partial controllability to hold.
\end{remark}

\subsection{Proofs of main results}

We start with the following technical lemma.
\begin{lemma}\label{lem-reachable-subspace-H4}
		Assume that $(Q,0,T)$ satisfies the MOC and let  $y_0\in L^2(\Omega)$.
	Then
	\begin{align}\label{H4-reachable-infty}
	\mathcal R_M^{\infty}(T,y_0,\alpha)
    =\mathcal H^4  \subset      \mathcal R_M^{\infty}(T,y_0)
    ~~~\mbox{for all}~  \alpha>1,
	\end{align}
where $\mathcal R_M^{\infty}(T,y_0,\alpha)$  is as in \eqref{202202yb-H4-WeightedReachableSet}.
\end{lemma}

\begin{proof}
We fix $\alpha>1$. Since $\varPhi(t)y_0=y(t,\cdot;y_0)$ ($t\geq 0$),
it follows from \eqref{202202yb-H4-WeightedReachableSet} and \eqref{reachable-subspace-M} that
\begin{align*}
 \mathcal R_M^{\infty}(T,0,\alpha) + \varPhi(T) y_0
 = \mathcal R_M^{\infty}(T,y_0,\alpha)
 \subset
 \mathcal R_M^{\infty}(T,y_0)
 = \mathcal R_M^{\infty}(T,0) + \varPhi(T) y_0.
\end{align*}
Accordingly, since $\varPhi(T) y_0 \in \mathcal H^4$ (see Corollary \ref{cor-only-h1}),
 it suffices to show the equality in \eqref{H4-reachable-infty}  with $y_0=0$.

   For this purpose, we write
\begin{align}\label{4.14-2-28}
    \begin{aligned}
        L^1_{\alpha} (0,T;L^2(\Omega))  :=&
        \Big\{ f:(0,T)\rightarrow L^2(\Omega)  ~\big|~
        \int_0^{T} \|f(t)\|_{L^2(\Omega)} (T-t)^{\alpha} dt <+\infty \Big\},
            \\
        L^\infty_{ -\alpha }(0,T;L^2(\Omega)):=&
        \Big\{ g:(0,T)\rightarrow L^2(\Omega)  ~\big|~
        \underset{0<t<T}{\text{ess-sup}} \,
        \|(T-t)^{-\alpha} g(t)\|_{L^2(\Omega)}
        <+\infty \Big\}.
    \end{aligned}
\end{align}
We define three Banach spaces as follows:
$
   X:=\mathcal H^{-4}, ~
   Y:=L^1_{ \alpha }(0,T;L^2(\Omega)),
   ~Z:=L^2(\Omega).
$
We can directly check that
\begin{align}\label{4.15-2-28}
   X^*=\mathcal H^{4}, ~
   Y^*=L^\infty_{- \alpha }(0,T;L^2(\Omega)),
   ~Z^*=L^2(\Omega).
\end{align}
Then, we define two operators $\mathcal R:Z\rightarrow X$ and $\mathcal O: Z
\rightarrow Y$ in the following manner:
\begin{align}\label{202202yb-Def-TwoOperators-RO}
   \mathcal R z:= z,~z\in Z
   ~~\text{and}~~
   \mathcal O z:= \chi_Q\varPhi(T-\cdot)z,~z\in Z.
\end{align}
By Proposition \ref{prop-constant-variation} (in the Appendix) and the self-adjointness of $\varPhi(\cdot)$ (see \eqref{eq-varPhi-expression} in the Appendix), we can directly check that \begin{align}\label{202202yb-Def-TwoDualOperators-RO}
   \mathcal R^* z = z,~z\in X^*
   ~~\text{and}~~
   \mathcal O^* u:= y(T;0,u),~u\in Y^*.
\end{align}
 Meanwhile, since $(Q,0,T)$ satisfies the MOC,
it follows by \eqref{Q-S-T-bounded-from-below} that  $(\widehat Q,0,T)$  also
satisfies the MOC,   where
\begin{align*}
   \widehat Q:=\big\{(t,x) \in (0,T)\times \Omega
   ~:~ (T -t,x) \in Q \big\}.
\end{align*}
 Thus, we can use    Theorem \ref{202210-thm-weighted-observability}
to see that   \eqref{202205TJYB-TwoSideObservability-with-weight} holds with $Q$ replaced by $\widehat Q$.
    This, along with  \eqref{202202yb-Def-TwoOperators-RO},
   yields that  for some $C>0$,
\begin{align}\label{202202yb-TwoOperators-RO-DoubleSidesOb}
  \frac{1}{C} \| \mathcal R z\|_{X}
  \leq \| \mathcal O z \|_{Y}
  \leq   C \| \mathcal R z\|_{X},\;\; z\in Z.
\end{align}

We now claim that
\begin{align}\label{202202yb-TwoOperators-RO-SameRanges}
   \text{Range}\, \mathcal R^* = \text{Range}\, \mathcal O^*.
\end{align}
Indeed, from the first inequality in \eqref{202202yb-TwoOperators-RO-DoubleSidesOb}, we apply Corollary \ref{202202yb-corollary-framework} to see that  for each $z\in X^*$, there is a $u\in Y^*$ such that
$
   \mathcal R^* z = \mathcal O^* u
$,
which yields  that Range\,$\mathcal R^*\subset$\,Range\,$\mathcal O^*$.
Similarly, from the second inequality in \eqref{202202yb-TwoOperators-RO-DoubleSidesOb} and Corollary \ref{202202yb-corollary-framework}, we can see that  Range\,$\mathcal R^*\supset$\,Range\,$\mathcal O^*$. Hence, \eqref{202202yb-TwoOperators-RO-SameRanges} is true.

Finally, from  \eqref{202202yb-Def-TwoDualOperators-RO},
  \eqref{202202yb-TwoOperators-RO-SameRanges},  \eqref{4.14-2-28}, and  \eqref{4.15-2-28}, we see that
\begin{align*}
  \mathcal H^4=\Big\{
   y(T;0,u) \in L^2(\Omega)
   ~:~u\in Y^*=L^\infty_{ -\alpha }(0,T;L^2(\Omega))
  \Big\}.
\end{align*}
This, along with  the definition of $\mathcal R_M^{\infty}(T,y_0,\alpha)$  (given in \eqref{202202yb-H4-WeightedReachableSet}), shows that the equality in \eqref{H4-reachable-infty}
(with  $y_0=0$) is true.
This finishes the proof of Lemma \ref{lem-reachable-subspace-H4}.
\end{proof}

We are now in a position to prove Theorem \ref{thm-exact-controllability}.

\begin{proof}[Proof of Theorem \ref{thm-exact-controllability}]
	First, we recall that $y(\cdot;y_0,u)$ denotes the solution to
\eqref{our-system}, while $z(\cdot;y_0,u)$ denotes the solution to
\eqref{our-system} where $M=0$.
	We arbitrarily fix  the initial datum $y_0\in L^2(\Omega)$. We notice that \eqref{202202yb-H4-WeightedReachableSet} directly follows from Lemma \ref{lem-reachable-subspace-H4}.
It remains  to show
\eqref{exact-ob-with-H8H6-target}, i.e.,
	\begin{align}\label{Rm=R0+H4-eq}
\mathcal H^4 \subset
	\mathcal R_M^p(T,y_0) =  \mathcal R_0^p(T,y_0) + \mathcal H^4.
	\end{align}
The proof of \eqref{Rm=R0+H4-eq} is organized in several steps.
\vskip 5pt
\noindent{\it Step 1. We show that for each $u\in L^1_{loc}(\overline{\mathbb R^+};L^2(\Omega))$,
\begin{align}\label{Rm=R0+H4-up-fu}
	f_u(\cdot)\in C( \mathbb R^+; \mathcal H^4),
	\end{align}
where
		\begin{align}\label{7.6,7.27}
	f_u(t):=
	y(t;y_0,u) -z(t;y_0,u),\;\;t>0.
	\end{align} }
~~~\,To this end, we arbitrarily fix a $u\in L^1_{loc}(\overline{\mathbb R^+};L^2(\Omega))$. Then,
	 by \eqref{7.6,7.27}, Proposition \ref{prop-constant-variation} (in the Appendix), and Corollary \ref{cor-only-parabolic},
	we find that
	\begin{align}\label{fu-u-Rc-eq-july15}
	f_u(t) = \widetilde{\mathcal R}_c(t,-A) A^{-2} y_0
	+  \int_0^t \widetilde{\mathcal R}_c(t-s,-A) A^{-2} (\chi_Qu)(s)ds,\;\;t>0.
	\end{align}
		Meanwhile,  it follows by Corollary \ref{cor-only-parabolic} that
$
	\widetilde{\mathcal R_c} \in C(\mathbb R^+;\mathcal L(\mathcal H^0)) \cap L^\infty_{loc}(\overline{\mathbb R^+};\mathcal L(\mathcal H^0)).
$
	This, along with  \eqref{fu-u-Rc-eq-july15}, yields  that when $t_2\geq t_1>0$,
	\begin{align*}
	\| f_u(t_1) - f_u(t_2) \|_{ \mathcal H^4 }
	\leq&   \| \widetilde{\mathcal R}_c(t_1,-A)
	- \widetilde{\mathcal R}_c(t_2,-A)  \|_{ \mathcal L(\mathcal H^0) }
	\| y_0\|_{\mathcal H^0}
	+ \| \widetilde{\mathcal R}_c(\cdot,-A) \|_{ L^\infty(0,t_2; \mathcal L(\mathcal H^0) ) }
	\nonumber\\
	&  \times
	\Big( \int_0^{t_1} \|u(t_1-s) - u(t_2-s)\|_{\mathcal H^0} ds
	+  \int_{t_1}^{t_2} \| u(t_2-s)\|_{\mathcal H^0} ds  \Big),
	\end{align*}
	which leads to \eqref{Rm=R0+H4-up-fu}.

\vskip 5pt

\noindent{\it Step 2. We show that
	\begin{align}\label{Rm=R0+H4-up}
	\mathcal R_M^p(T,y_0) \subset \mathcal R_0^p(T,y_0) + \mathcal H^4.
	\end{align}}
~~~\,We arbitrarily fix a $y_1 \in \mathcal R_M^p(T,y_0)$. By \eqref{reachable-subspace-M}, there is a $u_{1} \in L^p(\mathbb R^+;L^2(\Omega))$ such that
	\begin{align*}
	y_1=y(T;y_0,u_1)=z(T;y_0,u_1) +
	\Big( y(T;y_0,u_1)-z(T;y_0,u_1) \Big).
	\end{align*}
	Since $z(T;y_0,u_1) \in \mathcal R_0^p(T,y_0)$, the above, along with \eqref{Rm=R0+H4-up-fu}, leads to \eqref{Rm=R0+H4-up}.

\vskip 5pt

\noindent{\it Step 3. We show that
	\begin{align}\label{Rm=R0+H4-down}
	\mathcal R_M^p(T,y_0) \supset \mathcal R_0^p(T,y_0) + \mathcal H^4.
	\end{align}}
~~~\,We arbitrarily fix two functions $\hat y_1 \in \mathcal R_0^p(T,y_0)$ and  $\hat y_2 \in \mathcal H^4$.
According to the definition of $\mathcal R_0^p(T,y_0)$ (see \eqref{reachable-subspace-M} with $M=0$), there is a $\hat u_{1} \in L^p(\mathbb R^+;L^2(\Omega))$ such that
	\begin{align}\label{hat-y1-heat}
	\hat y_1=z(T;y_0, \hat u_1).
	\end{align}
	Since $\hat y_2 \in \mathcal H^4$, we see from \eqref{Rm=R0+H4-up-fu} that
	\begin{align}\label{def-hat-y3}
	\hat y_3: =\hat y_2 - 	\Big( y(T;y_0,\hat u_1) - z(T;y_0,\hat u_1) \Big)
	\in \mathcal H^4.
	\end{align}
Since $\hat y_3\in  \mathcal H^4$,  we can apply Lemma \ref{lem-reachable-subspace-H4} (where $y_0=0$) to find a control $\hat u_2\in L^\infty(\mathbb R^+;L^2(\Omega))$, with $\hat u_2|_{(T,+\infty)}=0$, such that
	$\hat y_3= y(T;0,\hat u_2)$.
	This, together with \eqref{def-hat-y3} and \eqref{hat-y1-heat}, yields
	\begin{align*}
	y(T;y_0,\hat u_1 + \hat u_2)
	=   y(T;y_0,\hat u_1 )  +\hat y_3
	=\hat y_2 +  z(T;y_0,\hat u_1)  =\hat y_2 + \hat y_1.
	\end{align*}
	Since $\hat y_1$ and $\hat y_2$ were arbitrarily taken
from $\mathcal R_0^p(T,y_0)$ and $\mathcal H^4$, respectively, the above leads to \eqref{Rm=R0+H4-down}.

\vskip 5pt

\noindent{\it Step 4. We check  \eqref{Rm=R0+H4-eq}.}

 Because $(Q,0,T)$ satisfies the MOC,
by Lemma \ref{lem-reachable-subspace-H4}, it follows that $\mathcal H^4 \subset \mathcal R_M^{\infty}(T,y_0)$. At the same time, $\mathcal R_M^{\infty}(T,y_0) \subset \mathcal R_M^{p}(T,y_0)$ by their definitions. Therefore, $\mathcal H^4 \subset \mathcal R_M^{p}(T,y_0)$.
This, along with  \eqref{Rm=R0+H4-up} and \eqref{Rm=R0+H4-down},  yields \eqref{Rm=R0+H4-eq}.

Hence, we have completed the proof of Theorem \ref{thm-exact-controllability}.
\end{proof}

We end this section by proving Theorem \ref{thm-necessity-geometry}.

\begin{proof}[Proof of Theorem \ref{thm-necessity-geometry}]
 Assume that the control system \eqref{our-system} is $L^p$-partially controllable over $[0,T]$.
By contradiction, we
suppose that
  \eqref{202203-MCCofBall} is not true. Then,
 there is an $r_1>0$ such that
 \begin{align*}
     \inf_{x_1\in\Omega}  {\int\hspace{-1.05em}-}_{ B(x_1,r_1) }
      \bigg( \int_0^T \chi_Q(t,x) dt
      \bigg) dx =0.
 \end{align*}
 Using  the same arguments used for \eqref{3.8-3-20} and \eqref{sharp-geometry-pf-1}, we can find
   $x_0 \in  \Omega$ and $r>0$ such that
\begin{eqnarray}\label{sharp-geometry-pf-1-R1}
\chi_Q(t,x) =0 ~~\text{for a.e.}~
(t,x) \in (0,T) \times B(x_0,r).
\end{eqnarray}

 	We now claim \color{black} that there is a constant $C>0$  such that
	for each $y_0\in L^2(\Omega)$, there exists $u_{y_0}\in L^1(0,T;L^2(\Omega))$
	satisfying
	\begin{align}\label{2020-july-null-ob-cost-R1}
	y(T;y_0,\tilde u_{y_0})=0  \;\;\mbox{and}\;\;
	\|u_{y_0} \|_{ L^1(0,T;L^2(\Omega)) }
	\leq C \|y_0\|_{L^2(\Omega))}
	\end{align}
(here and in what follows, given a control $v$ over $[0,T]$, we use $\tilde v$
to denote its zero extension  over $\mathbb R^+$).

To show \eqref{2020-july-null-ob-cost-R1}, we  define the operator
	\begin{align*}
	L_T(u):= y(T;0,\tilde u),\;\; u\in L^p(0,T;L^2(\Omega)).
	\end{align*}
	Then, by the assumption of the $L^p$-partial controllability, we have  Range\,$\varPhi(T)\subset$\,Range\,$L_T$. Without loss of generality, we can assume that $L_T$ is injective; otherwise, we can replace $L_T$ by the operator $\widetilde L_T$ (from the quotient space $L^p(0,T;L^2(\Omega))/\ker L_T$ to $L^2(\Omega)$), which is uniquely induced by $L_T$.
	Then, for each $y_0\in L^2(\Omega)$, there is a unique $u_{y_0}$ such that
	\begin{align*}
	\varPhi(T)y_0 = L_T u_{y_0},\; \mbox{i.e.},\; y(T;y_0,-\tilde u_{y_0})=0.
	\end{align*}
	According to the closed graph theorem,  we can directly check that the map $y_0\mapsto u_{y_0}$ is continuous from $L^2(\Omega)$ to the space $L^p(0,T;L^2(\Omega))$. This yields \eqref{2020-july-null-ob-cost-R1}, since $p\geq 1$.

	We next claim that there is a $C>0$ such that
		\begin{eqnarray}\label{sharp-geometry-pf-s1-2-R1}
		\|\varPhi(T)z\|_{L^2(\Omega) }
		\leq  C \| \varPhi(\cdot)z\|_{ L^\infty(0,T;L^2( \Omega\setminus B(x_0,r) )) }\;\;\mbox{for each}\;\; z\in L^2(\Omega).
		\end{eqnarray}
Indeed, by \eqref{2020-july-null-ob-cost-R1}, using the classical duality argument
	(see for instance \cite[Theorem 1.18]{WWXZ}), we can obtain the following observability inequality: there is a constant $C>0$ such that
	\begin{eqnarray}\label{5.22,7.26w-R1}
	\|\varPhi(T)z\|_{L^2(\Omega) }
	\leq
	C \|\chi_Q \varPhi(T-\cdot)z\|_{ L^{\infty}(0,T;L^2(\Omega))}\;\;\mbox{for each}\;\; z\in L^2(\Omega).
	\end{eqnarray}
	Now \eqref{sharp-geometry-pf-s1-2-R1} follows from \eqref{5.22,7.26w-R1} and  (\ref{sharp-geometry-pf-1-R1}).

Finally, we notice that \eqref{sharp-geometry-pf-s1-2-R1}  is the same as \eqref{sharp-geometry-pf-s1-2}. Thus, we can use the same arguments as those after \eqref{sharp-geometry-pf-s1-2} (in the proof of Theorem \ref{thm-null-observability-without-weight}) to get to a contradiction.
Hence, the conclusion in Theorem \ref{thm-necessity-geometry} is true. This completes the proof of Theorem \ref{thm-necessity-geometry}.
\end{proof}

\section{Numerical experiments}\label{202310-yb-SimpleExplanationForDecomposition}

The hybrid parabolic-hyperbolic effect of equation \eqref{our-obserble-system} was shown in \cite{WZZ-1}, and  plays a key role in the study of this paper.
Here we present some numerical experiments in one space dimension confirming this hybrid behaviour.

Let $\Omega=(0,1)$ and consider the following equation with the constant memory kernel:
\begin{align}\label{202320-yb-ModelWithConstantMemoryKernel}
    y'(t)  - A y(t) + \int_0^t y(s) ds =0, ~t>0;
    ~  y(0)= y_0.
\end{align}
Recall that $\{\eta_j\}_{j\geq 1}$ and $\{e_j\}_{j\geq 1}$ are the eigenvalues and the corresponding eigenvectors (normalized in $L^2(\Omega)$) of the operator $-A$, respectively. By the spectral method, we have that for each $t\geq 0$ and $x\in (0,1)$,
\begin{align*}
   y(t,x; \delta_{0.3} )
   =& \sum_{j\geq 1}   \frac{1}{2} \bigg[
   \Big( 1 + \frac{\eta_j}{  \sqrt{\eta_j^2 -4} }
   \Big)
   e^{ \big( -\eta_j - \sqrt{\eta_j^2 -4}  \big)
   \frac{t}{2} }
   \nonumber\\
   &   \quad\quad\quad
    + \Big( 1 - \frac{\eta_j}{  \sqrt{\eta_j^2 -4} }
   \Big)
   e^{ \big( -\eta_j + \sqrt{\eta_j^2 -4} \big)
   \frac{t}{2}  }
   \bigg]
   e_j(0.3) e_j(x).
\end{align*}
We discretize the interval $\Omega=(0,1)$ with the mesh size $h=10^{-3}$, keep the first $10^3$ frequency components in the last expression, and then draw the solution $y(\cdot,\cdot; \delta_{0.3} )$ and its $4$-th space derivative in black in Figure  \ref{202310-yb-SolutionsWithWaves}.

In this figure, the red  curves (in both rows) represent the solution of the pure heat equation (with the same initial datum) and its $4$-th space derivative, respectively. The blue curves (marked as ``leading wave") in the first row represent the first nontrivial term $-M(t)A^{-2} y_0$ in the wave-like component $\mathcal W_N$ of the decomposition \eqref{0423-demcomposition-eq}. The black curves represent the complete solution of equation \eqref{202320-yb-ModelWithConstantMemoryKernel} and its fourth-order derivative.
\begin{figure}[!htb]
    \centering
    \includegraphics[height=15cm,width=15cm]{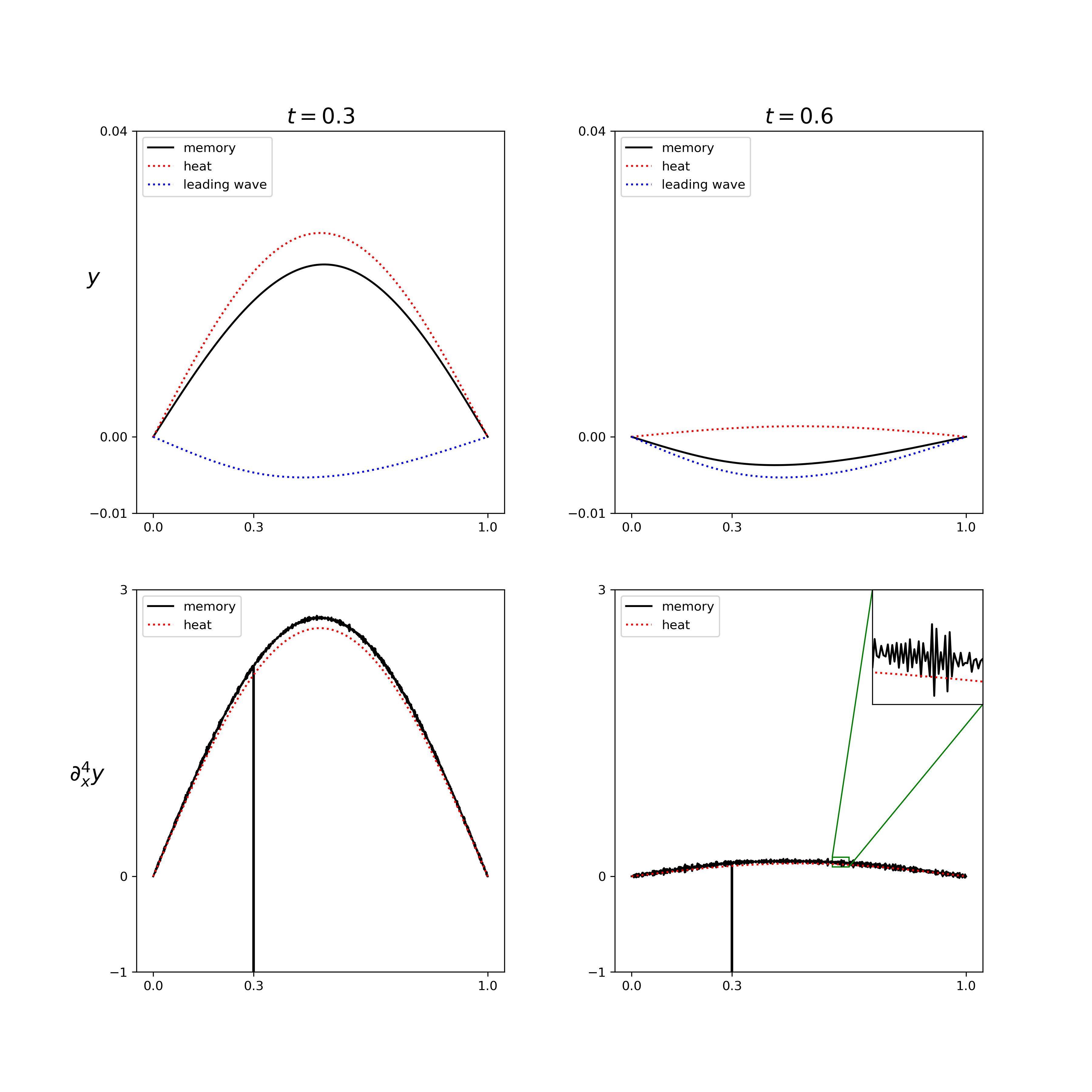}
    \caption{The curves in the first and second columns  correspond to the functions (drawn in different colors) at the time instants $t=0.3$ and $t=0.6$, respectively. The curves in the first and second rows correspond to the functions (drawn in different colors) and their $4$-th space derivatives, respectively.}
    \label{202310-yb-SolutionsWithWaves}
\end{figure}

In these figures we confirm that: $(i)$ at both two time instants, the $4$-th space derivative of $y(\cdot,\cdot; \delta_{0.3} )$ reproduces the singularity of the initial datum $\delta_{0.3}$. This confirms the propagation of singularities along time of the solutions of \eqref{our-obserble-system};  $(ii)$ the solution $y(\cdot,\cdot; \delta_{0.3} )$ evolves gradually from the pure heat solution to the vicinity of the first nonzero term $-M(t)A^{-2} y_0$
 in the wave-like component $\mathcal W_N$ in the decomposition \eqref{0423-demcomposition-eq}.

Next, we focus on the simplified equality \eqref{202310-yb-SimpliedKeyEquality}: when $T>S>0$,
\begin{align}\label{202310-yb-RewriteSimpliedKeyEquality}
    y(t,\cdot;y_0) = - M(t) A^{-2} y_0 +
    \text{``small terms"},~~t \in [S,T],
\end{align}
which is used to highlight the wave-like effect in the decomposition \eqref{0423-demcomposition-eq} for equation \eqref{our-obserble-system}.

Let us first motivate the decomposition  \eqref{202310-yb-RewriteSimpliedKeyEquality}. Equation  \eqref{202320-yb-ModelWithConstantMemoryKernel}  for the constant memory kernel, by the spectral method, leads to the following  one,  depending on the parameter $\eta>0$:
\begin{align}\label{202310-yb-ParameterizedODEsWithConstantMemory}
    x'(t)  + \eta x(t) + \int_0^t x(s) ds =0, ~t>0;
    ~  x(0)=1.
\end{align}
The solution takes the form
\begin{align*}
    x(t)= \big( 1+ o(1) \big)  \exp\big[- \big(1+o(1)\big) \eta t \big]
    - \big(  1+ o(1)  \big)   \eta^{-2} \exp\big[- \big(1+o(1)\big) \eta^{-1} t \big],
\end{align*}
where each $o(1)$ stands for  infinitesimal perturbations as $\eta\rightarrow +\infty$. In particular, when $T>S>0$,
\begin{align*}%\label{202320-yb-SimplifiedExpansionForODEs}
    x(t) = - \eta^{-2} + o(\eta^{-2}),  ~S\leq t \leq T,
\end{align*}
which is then transformed to \eqref{202310-yb-RewriteSimpliedKeyEquality} for $M\equiv 1$ by the spectral calculus (that allows to replace the parameter $\eta$  by the operator $-A$).

The same analysis can be extended to more general memory kernels, \begin{align}\label{202310-yb-ParameterizedODEsWithGeneralMemory}
    x'(t)  + \eta x(t) + \int_0^t M(t-s) x(s) ds =0, ~t>0;
    ~  x(0)=1.
\end{align}
Let $T>S>0$. We adopt the following ansatz with coefficients $\{a_k(\cdot)\}_{k=0}^2$ and $\{b_k(\cdot)\}_{k=0}^2$ to be determined:
\begin{align*}
    x(t)= e^{-\eta t} \Big( \sum_{k=0}^2 a_k(t) \eta^{-k}  + o(\eta^{-2})   \Big)
    +  \sum_{k=0}^2 b_k(t) \eta^{-k}  + o(\eta^{-2})   ,
    ~t\in [0,T],
\end{align*}
and take it into \eqref{202310-yb-ParameterizedODEsWithGeneralMemory} to get that $b_0=b_1=0$ and $b_2 = -M$. Then, we obtain 
\begin{align*}
    x(t) = - M(t) \eta^{-2} + o(\eta^{-2}),  ~t\in[S,T] \subset (0,T],
\end{align*}
to justify \eqref{202310-yb-RewriteSimpliedKeyEquality}. Theorem \ref{cor-0423-demcomposition} is a consequence of this kind of analysis.

\section{Open problems}
%%LEAP%%%\label{sec6}
\label{sec-open-problems}

A number of interesting issues could be considered in connection with the
results and methods developed in this paper. Here, we briefly present some of them.
\begin{itemize}
\item  \textit{Memory-type controllability.} We have analysed the partial controllability \eqref{partial-control}. The problem of memory-type  controllability \eqref{20231013-yb-LongTimeEquivibrillum} under the MOC for general analytic kernels, as considered here, is open.

~~~\,Note that the methods in \cite{Chaves-Silva-Zhang-Zuazua}, imposing stronger  geometric conditions on the control sets and restricting the analysis to specific kernels, like polynomial ones, in particular, allow to ensure the full control of the system \eqref{our-system}. Let us explain why those methods are insufficient to handle the more general setting in this paper.

~~~\,When, for instance, the memory kernel is polynomial, $M(t)= \sum_{j=0}^{m} a_k t^k$, a key idea in \cite{Chaves-Silva-Zhang-Zuazua} is to  rewrite  \eqref{our-system} as a hybrid control system of PDEs and ODEs which takes the form
\begin{align*}
    &\partial_t y  - \Delta y + z_0 = \chi_Q u,
        \nonumber\\
    &\partial_t z_k = z_{k+1} + M^{(k)}(0) y ,  ~k=0,\ldots,m-1,
       \\
    &\partial_t z_{m} = M^{(m)}(0) y,
        \nonumber
\end{align*}
by introducing the following extra state variables $\{z_k\}_{k=0}^m$:
\begin{align*}
    z_k(t,x):= \int_0^t M^{(k)}(t-s) y(s,x) ds, ~ t\geq 0, ~x\in \Omega,~k=0,\ldots,m.
\end{align*}
In \cite{Chaves-Silva-Zhang-Zuazua} the control $u$ is built so that the ensemble of the state variables $y,z_0,\ldots,z_m$ are controlled at the final time $T$, which guarantees the memory-type controllability of the system \eqref{our-system}. The null controllability of the above augmented system is proven by duality, as a consequence of an observability inequality from a moving observation set, employing Carleman inequalities.  This requires however some stronger MOC conditions.

~~~\,The methods in \cite{Chaves-Silva-Zhang-Zuazua} seem insufficient to deal both with the sharp MOC condition in this paper and general analytic kernels. Note in particular that, in the case of general analytic memory kernels, the strategy above of adding auxiliary state variables  leads to the coupling of a heat equation with an infinite number of ODEs, which makes it hard to implement methods inspired by Carleman inequalities.
More precisely, the  memory-type controllability \eqref{20231013-yb-LongTimeEquivibrillum}  is open in the following three cases:
\begin{itemize}[leftmargin=4em]
    \item[$(i)$]  under the stronger moving geometric conditions as in \cite{Chaves-Silva-Zhang-Zuazua} for general analytic memory kernels;

    \item[$(ii)$] under the sharp MOC condition of the present paper for polynomial memory kernels;

    \item[$(iii)$] under the sharp MOC condition of the present paper for general analytic memory kernels.
\end{itemize}

    \item \textit{Smooth memory kernels.} It would be interesting to investigate whether Theorem \ref{202205TJyb-MainTheorem-merged} holds when
    $M\in C^{\infty}([0,+\infty))$.
Extending the method we developed for the real analytic memory kernels to this $C^\infty$-case, we need the following two results: first, an analog of the decomposition in Theorem \ref{cor-0423-demcomposition}; second, a unique continuation theorem similar to Lemma \ref{lem-unique-continuation-for-low-frequence} (there we used the analyticity in the time variable of the solutions to equation \eqref{our-obserble-system}, presented in Proposition \ref{pro-analyticity-pointwise}). The first result holds for smooth memory kernels (see \cite[Theorem 4.10]{WZZ-1} for the details). However, without assuming the analyticity of the memory kernel, we may not have the time-analyticity of solutions, and then the unique continuation property is a challenging open problem.

    \item \textit{Space-dependent memory kernels.} The extension of the results
    of this paper to the space-dependent memory kernels $M=M(t,x)$ is also open.

    ~~~\,In particular,  we do not know how to reveal the hybrid parabolic-hyperbolic property, with a decomposition similar to Theorem \ref{cor-0423-demcomposition}. The unique continuation property requires further analysis as well.

    \item \textit{Memory kernels in the principal part of the model.} It would
    be interesting to extend Theorem \ref{202205TJyb-MainTheorem-merged} to the following
    two types of heat equations with memory kernels:
    \begin{itemize}
        \item[($i$)]
        $\partial _{t} y - \Delta y - \int _{0}^{t} M(t-s) \Delta y(s)ds=0$;
        \item[($ii$)]
        $\partial _{t} y - \int _{0}^{t} M(t-s) \Delta y(s)ds=0$.
    \end{itemize}
These models are more relevant  than \eqref{our-obserble-system} from an applied perspective. The memory kernel enters within the principal part of equations, and this may bring new phenomena. It would be interesting to establish a similar decomposition as in Theorem \ref{cor-0423-demcomposition} to reveal the possible hybrid parabolic-hyperbolic character of these models and its control theoretical consequences.

\end{itemize}

\section{Appendix}\label{sec-appendix}

This section  reviews the decomposition theorem for the flow $\varPhi(t)$ and some
other properties established in \cite{WZZ-1}.
%We also present some numerical experiments that confirm this behaviour.

 Next, an estimate for the heat equation, as well as the proofs of several results developed before,
 is provided. Then, the variation of the constant formula for the control system \eqref{our-system} is presented.  Finally, an abstract framework for observability/controllability (in \cite{WWZ-jems}) is introduced.

\subsection{Review of decomposition of flow}

 We start by recalling  the definition of the following functions from \cite{WZZ-1}.
First,  for each  $l\in\mathbb{N}$, we let
 \begin{align}\label{thm-ODE-meomery-asymptotic-estimate-hypobolic}
\begin{cases}
h_l(t) :=&
(-1)^l \displaystyle\sum_{j=0}^l  C_{l}^{l-j}
\dfrac{ d^{(l-j)} }{ dt^{(l-j)} } \underset{j}{ \underbrace{M*\cdots*M} } (t),\; t\geq 0,
\\
p_l(t) :=& -h_l(0)+(-1)^{l+1}
\displaystyle\sum_{  \tiny\begin{array}{c}
	m,j\in \mathbb N^+, \\
	2j-l-1\leq m \leq j
	\end{array}
}
\bigg(
C_{l}^{l-j+m}
\dfrac{ d^{(l-j+m)} }{ dt^{(l-j+m)} } \underset{j}{ \underbrace{M*\cdots*M} }(0)
\bigg)  \frac{(-t)^m}{m!},~t\geq 0,
\end{cases}
\end{align}
where $C_\beta^m:= \frac{\beta!}{m!(\beta-m)!}$ and
  $\underset{j}{ \underbrace{M*\cdots*M} }:=0$ if $j=0$. Second, we let
  \begin{align}\label{new-kernel-KM}
	K_M(t,s):=
	\sum_{j=1}^{+\infty}  \frac{ (-s)^j }{ j! } \underset{j}{ \underbrace{M*\cdots*M} }(t-s),~~t\geq s.
	\end{align}
The next decomposition theorem and Proposition \ref{prop-varPhi-expression} below are consequences of
  \cite[Theorems 1.1 and 1.2]{WZZ-1} and \cite[Propositions 2.3 and 4.8]{WZZ-1}, respectively.

\begin{theorem}\label{cor-0423-demcomposition}
	 For each integer $N\geq 2$,  it holds that
\begin{align}\label{0423-demcomposition-eq}
	\varPhi(t) =&  \mathcal P_N(t)+  \mathcal W_N(t)  +  \mathfrak R_N(t),~~t\geq0,
	\end{align}
with
 \begin{align}\label{def-PN-HN-RN}
\left\{
	\begin{array}{lll}
	  \mathcal P_N(t)&:=& e^{tA} + e^{tA}\sum_{l=0}^{N-1} p_{l}(t)  (-A)^{-l-1}   ,\\
	  \mathcal W_N(t)&:=& \sum_{l=1}^{N-1} h_{l}(t)  (-A)^{-l-1},\\
	  \mathfrak R_N(t)&:=& R_N(t,-A) (-A)^{-N-1},
	\end{array}
~~~~~	t\geq0,
\right.
\end{align}
 where $p_l$ and $h_l$ are given by \eqref{thm-ODE-meomery-asymptotic-estimate-hypobolic}
and
 \begin{align}\label{0921-RN-good-remainder}
	 R_N(t,\tau) := \int_0^t \tau e^{-\tau s} \partial_s^N K_M(t,s)  ds,~~
	t\geq0,~~\tau\geq 0,
	\end{align}
with $K_M$ given by \eqref{new-kernel-KM}.
  Moreover, for  each $t\geq 0$,   $\{h_l(t)\}_{l\geq1}$
	is not the null sequence, while   for each  integer $N\geq 2$ and
   each $s\in \mathbb R$,
    $R_N(\cdot,-A)|_{\mathbb R^+}$ belongs to $C(\mathbb R^+;\mathcal L( \mathcal H^s))$ and satisfies
        \begin{eqnarray}\label{RN-property-regularity}
	\big\|  R_N(t,-A) \|_{ \mathcal L( \mathcal H^s) }
	\leq e^t \bigg\{
	\exp\bigg[ N(1+t) \bigg(\sum_{j=0}^N \max_{0\leq s\leq t} \Big| \frac{d^j}{ds^j}M(s) \Big| \bigg) \bigg]  - 1  \bigg\}
	,~~t\geq 0.
	\end{eqnarray}
 	\end{theorem}

 \begin{remark}\label{202210-TJyb-remark-WN}
     When $N\geq 3$, the wave-like component $\mathcal W_N$ in \eqref{def-PN-HN-RN} is of the form :
     \begin{align}\label{202210-TJyb-WN-LeadingTerm}
         \mathcal W_N(t) = -M(t) A^{-2} +   \sum_{l=2}^{N-1} h_{l}(t)  (-A)^{-l-1},
         ~~t\geq 0.
     \end{align}
 This can be directly checked from \eqref{def-PN-HN-RN} and \eqref{thm-ODE-meomery-asymptotic-estimate-hypobolic} (where  $h_1=-M$).
 \end{remark}

\begin{proposition}\label{prop-varPhi-expression}
Let 	$K_M$ be given by (\ref{new-kernel-KM}). Then, $K_M$ is real analytic over $S_+:=\{(t,s)\in \mathbb R^2~:~t\geq s\}$. Moreover,
\begin{align}\label{eq-varPhi-expression}
	\varPhi(t)^*=\varPhi(t)
	= e^{tA} +   \int_0^t K_M(t,\tau) e^{\tau A} d\tau,
	\;\;  t\geq 0.
	\end{align}
	\end{proposition}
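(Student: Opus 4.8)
The statement bundles three claims: real analyticity of $K_M$ on $S_+$, the integral representation $\varPhi(t)=e^{tA}+\int_0^t K_M(t,\tau)e^{\tau A}\,d\tau$, and self-adjointness of $\varPhi(t)$. The plan is to prove the representation first, after which self-adjointness is a one-line consequence ($e^{tA}$ is self-adjoint and each $K_M(t,\tau)$ is a real scalar, since $M$ is real-valued and convolutions of real functions are real), and to handle the analyticity of $K_M$ separately — that being the only place where $(\mathfrak H)$ is actually used, the representation needing only continuity of $M$.

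For the representation I would reformulate \eqref{our-obserble-system} in mild (Duhamel) form and sum the resulting Volterra--Neumann series. Since the flow is the unique element of $C([0,\infty);L^2(\Omega))$ solving the equation, it is characterized by
\[
  \varPhi(t)=e^{tA}-\int_0^t e^{(t-s)A}\Big(\int_0^s M(s-\sigma)\,\varPhi(\sigma)\,d\sigma\Big)\,ds,\qquad t\ge0.
\]
Denoting by $\mathcal K$ the Volterra operator $\Theta\mapsto\big(t\mapsto\int_0^t e^{(t-s)A}\int_0^s M(s-\sigma)\Theta(\sigma)\,d\sigma\,ds\big)$ on operator-valued functions, this reads $\varPhi=e^{\cdot A}-\mathcal K\varPhi$; iterating gives $\varPhi=\sum_{j\ge0}\Psi_j$ with $\Psi_0=e^{\cdot A}$ and $\Psi_{j+1}=-\mathcal K\Psi_j$. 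Convergence in $C([0,T];\mathcal L(\mathcal H^s))$ for all $T$ and $s$ follows from the classical Volterra estimate (each application of $\mathcal K$ integrates in time) together with the elementary majorant $\|M^{*j}\|_{L^\infty[0,T]}\le\|M\|_{L^\infty[0,T]}^{\,j}T^{\,j-1}/(j-1)!$ for the $j$-fold convolution $M^{*j}$. I would then prove by induction on $j\ge1$ that
\[
  \Psi_j(t)=\int_0^t\frac{(-\tau)^j}{j!}\,M^{*j}(t-\tau)\,e^{\tau A}\,d\tau,
\]
the inductive step using Fubini to absorb one factor of $M$ into the convolution, the semigroup identity $e^{(t-\rho-u)A}e^{\rho A}=e^{(t-u)A}$ for the heat semigroup, and a final change of order producing $\int_0^{t-u}(-\rho)^j/j!\,d\rho=(-1)^j(t-u)^{j+1}/(j+1)!$. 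Summing and interchanging sum with integral (same majorant) yields $\varPhi(t)=e^{tA}+\int_0^t\big(\sum_{j\ge1}(-\tau)^jM^{*j}(t-\tau)/j!\big)e^{\tau A}\,d\tau=e^{tA}+\int_0^tK_M(t,\tau)e^{\tau A}\,d\tau$, and then $\varPhi(t)^*=\varPhi(t)$.

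For the analyticity of $K_M$, I would first use $(\mathfrak H)$ to extend $M$ to a holomorphic function on an open neighborhood $\mathcal D\subset\mathbb C$ of $[0,\infty)$. Writing the $j$-fold convolution over the standard simplex and rescaling, $M^{*j}(z)=z^{j-1}\int_{0\le\theta_1\le\cdots\le\theta_{j-1}\le1}M(z\theta_1)M(z(\theta_2-\theta_1))\cdots M(z(1-\theta_{j-1}))\,d\theta$, extends each $M^{*j}$ holomorphically to $\{z\in\mathbb C:z[0,1]\subset\mathcal D\}$, with a locally uniform majorant $|M^{*j}(z)|\le C_K^{\,j}|z|^{j-1}/(j-1)!$. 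Hence $K_M(t,s)=\sum_{j\ge1}(-s)^jM^{*j}(t-s)/j!$ is a locally uniformly convergent series of functions holomorphic in $(t,s)$ on a complex neighborhood of $S_+$, so its sum is holomorphic there; restricting to $S_+$ gives real analyticity.

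The conceptual content is modest; the main obstacle is the bookkeeping. First, the repeated Fubini interchanges and the sum/integral exchanges must be justified by checking absolute convergence of the nested integrals uniformly on compact time intervals, with attention near $\tau=0$ — there $e^{\tau A}$ fails to be norm-continuous up to $0$ after applying powers of $A$, but in these formulas it is only integrated against the continuous scalar kernel $K_M$, so no singularity arises. Second, one must choose the domain of holomorphy for the continuation of the $M^{*j}$ large enough that the majorant series converges on a full complex neighborhood of $S_+$; the simplex parametrization above is the cleanest device, reducing everything to the single bound on $|M^{*j}|$.
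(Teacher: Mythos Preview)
The paper does not prove this proposition in the present text; it is quoted from \cite[Propositions 2.3 and 4.8]{WZZ-1} (see the sentence immediately preceding Theorem~\ref{cor-0423-demcomposition} in the Appendix), so there is no in-paper argument to compare against.

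Your outline is a standard and correct route. The Duhamel reformulation plus Neumann iteration is the natural way to produce the kernel $K_M$: the closed form $\Psi_j(t)=\int_0^t\frac{(-\tau)^j}{j!}M^{*j}(t-\tau)e^{\tau A}\,d\tau$ follows exactly as you indicate (semigroup identity plus Fubini plus the elementary integral $\int_0^{t-u}\rho^j/j!\,d\rho$), and the majorant $\|M^{*j}\|_{L^\infty[0,T]}\le \|M\|_{L^\infty[0,T]}^{\,j}T^{j-1}/(j-1)!$ simultaneously justifies convergence of the operator series in $C([0,T];\mathcal L(\mathcal H^s))$ and the sum/integral interchange giving the representation \eqref{eq-varPhi-expression}. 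Self-adjointness is then immediate from the representation, since $e^{\tau A}$ is self-adjoint and $K_M(t,\tau)\in\mathbb R$. For the analyticity of $K_M$, the simplex parametrization $M^{*j}(z)=z^{j-1}\int_{\Delta_{j-1}}\prod M(z\,\cdot)\,d\theta$ is precisely the right device: it carries the complex extension of $M$ to each $M^{*j}$ on a star-shaped complex neighborhood of $[0,\infty)$ with the same factorial majorant, whence the series defining $K_M$ converges locally uniformly in $(t,s)$ on a complex neighborhood of $S_+$. The bookkeeping caveats you flag are the right ones and are all covered by absolute convergence; there is no genuine gap.
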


\subsection{Estimate for the heat equation}

The following technical result provides an estimate for the heat equation that we present for the sake of completeness.

\begin{lemma}\label{lem-heat-up-to-0}
	Let $B(x_0,r)\subset\Omega$ be a closed ball  centered at $x_0\in\Omega$ with radius $r>0$.
	Then for each $s\in \mathbb R$, there exists a constant $C=C(s)>0$ such that
	\begin{eqnarray}\label{heat-local-estimate-Hs-L2}
	\sup_{t\geq 0} \| e^{tA}z \|_{ L^2( \Omega \setminus B(x_0,r) ) }
	\leq C  \|z\|_{\mathcal H^s}\;\;\mbox{for each}\;\;  z\in \mathcal H^s \big( B(x_0,r/2) \big),
	\end{eqnarray}
	where $\mathcal H^s \big( B(x_0,r/2) \big)  :=  \big\{  z\in \mathcal H^s~:~  \mbox{supp}\,z\subset B(x_0,r/2) \big\}$.
	\end{lemma}

\begin{proof}
	It suffices to prove (\ref{heat-local-estimate-Hs-L2}) for each $s=-m$ with $m\in \mathbb N^+$. For this purpose,
	we 	arbitrarily fix a 
	$z\in \mathcal H^{-m} \big( B(x_0,r/2) \big)$.
		We select $\{r_l\}_{l=1}^{2m} \subset \mathbb R^+$ such that
	\begin{eqnarray}\label{classic-heat-local-july-1}
	r/2<r_1<\cdots<r_{2m}<r.
	\end{eqnarray}
	Then, we take a sequence of  functions $\{\rho_l\}_{l=1}^{2m}
	\subset C_0^\infty(\mathbb R^n)$ such that for each $l\in\{1,\cdots,2m\}$,
	\begin{eqnarray}\label{classic-heat-local-july-2}
	\rho_l=0 \;\;\mbox{over}\;\; B(x_0,r_{l-1})
	\;\;\mbox{and}\;\;
	\rho_l=1 \;\;\mbox{over}\;\; \mathbb R^n\setminus B(x_0,r_{l}).
	\end{eqnarray}
	We define a sequence of functions  $\{f_l\}_{l=0}^{2m}$ in the following manner:
	\begin{eqnarray}\label{classic-heat-local-july-3}
	f_0(t):= e^{tA} z,~t>0;~~
	f_l(t) := \rho_l e^{tA} z ,
	~t>0,~l\in\{1,\cdots,2m\}.
	\end{eqnarray}
	The rest of the  proof  is organized in several steps.

	\vskip 5pt
	\noindent\textit{Step 1. We prove that for each $\chi_1\in C_0^\infty(\Omega)$, $\chi_2\in C_0^\infty(\Omega;\mathbb R^n)$, and $\alpha\geq 0$, there is a $C=C(\chi_1,\chi_2,\alpha)>0$ such that
		\begin{eqnarray}\label{classic-heat-local-july-5}
		\| \chi_1  g\|_{\mathcal H^{-\alpha-1}} +
		\| \chi_2 \cdot \nabla g\|_{\mathcal H^{-\alpha-1}}
		\leq C \| g\|_{\mathcal H^{-\alpha}},\;\mbox{when}\; g \in \mathcal H^{-\alpha}.
		\end{eqnarray}}
 $\;\;$We arbitrarily fix $\chi_1\in C_0^\infty(\Omega)$ and $\chi_2\in C_0^\infty(\Omega;\mathbb R^n)$. We claim that for each $\alpha\geq0$,  there is a $C_1=C_1(\chi_1,\chi_2,\alpha)>0$ such that
	\begin{align}\label{div-rho-f-bound}
	\| \chi_1  f\|_{\mathcal H^{\alpha}}
	+	\| \mbox{div}\, (\chi_2 f) \|_{ \mathcal H^\alpha }
	\leq C_1 \|f\|_{\mathcal H^{\alpha+1} },\;\mbox{when}\;f\in \mathcal H^{\alpha+1}.
	\end{align}
	When \eqref{div-rho-f-bound}  is proven, \eqref{classic-heat-local-july-5} follows by the standard duality argument.

By  the interpolation theorem in \cite[Theorem 5.1]{Lions-Magenes}, we see that in order to  show \eqref{div-rho-f-bound}, it suffices to prove it
for $\alpha=2k$ with $k\in \mathbb N$.  To this end, we arbitrarily fix $\alpha =2k$ (with $k\in \mathbb N$) and  $f\in  \mathcal H^{2k+1}$.
	Since $\chi_1$ and $\chi_2$ are compactly supported in $\Omega$, we have
	\begin{align}\label{August14-Omega1-Omega}
	\Omega_1 := \text{supp}\,\chi_1 \cup \text{supp}\, \chi_2
	\subset\subset  \Omega.
	\end{align}
	We claim that there is a $C_2>0$ (independent of $f$) such that
	\begin{align}\label{f-2k+1-4k+2}
	\| f\|_{H^{2k+1}(\Omega_1)}  \leq C_2 \|f\|_{\mathcal H^{2k+1}}.
	\end{align}
	In fact, given $h\in \mathcal H^{2k+1}$, we have that $A^{k} h\in \mathcal H^1$. From this, \eqref{selfadjoit-elliptic-operator}, and \eqref{def-space-with-boundary-condition}, we see that
$
	\Delta^{k} h \in \mathcal H^1=H_0^1(\Omega).
$
	Since  $\Delta^{k}$ is an elliptic operator of  order $2k$, the above shows that
 $h \in H^{2k+1}_{loc}(\Omega)$
 (see for instance \cite[Theorem 18.1.29]{Hormander-3}). Consequently, we have
 $h|_{\Omega_1} \in H^{2k+1}(\Omega_1)$.
 Thus,  we can  define a linear map $\mathcal T$ from $\mathcal H^1$ to $H^{2k+1}(\Omega_1)$
in the following manner:
	\begin{align}\label{T-map-L2-H4k+2}
	\mathcal T ( A^{k} \bar h )
	:= \bar h|_{\Omega_1},\;\;\bar h\in \mathcal H^{2k+1}.
	\end{align}
	By using   the closed graph theorem to $\mathcal T$, we deduce that it is bounded.
	Then, by \eqref{T-map-L2-H4k+2}, there is a $C_3>0$ such that
	\begin{align*}
	\| \bar h \|_{ H^{2k+1}(\Omega_1) }
	\leq C_3 \| A^{k } \bar h \|_{ \mathcal H^1 }
	= C_3 \| \bar h \|_{ \mathcal H^{2k+1} }\;\;\mbox{for each}\;\;\bar h\in \mathcal H^{2k+1},
	\end{align*}
	which leads to  \eqref{f-2k+1-4k+2}.

	Now, by \eqref{def-space-with-boundary-condition}, \eqref{selfadjoit-elliptic-operator},   and \eqref{August14-Omega1-Omega}, there is a $C_4>0$ (independent of $f$)
	such that
	\begin{align*}
	\| \chi_1  f\|_{\mathcal H^{2k}}
	+	\| \mbox{div}\, (\chi_2 f) \|_{ \mathcal H^{2k} }
	= \| \Delta^k (\chi_1  f)\|_{ L^2(\Omega) }
	+	\| \Delta^k \mbox{div}\, (\chi_2 f) \|_{ L^2(\Omega) }
	\leq C_4 \| f \|_{ H^{2k+1}(\Omega_1) }.
	\end{align*}
	The above, along with \eqref{f-2k+1-4k+2}, yields \eqref{div-rho-f-bound} with $\alpha=2k$. This ends the proof of Step 1.

	\vskip 7pt
	\noindent {\it Step 2. We  prove  that for each $l\in\{1,\cdots,2m\}$, there exists a $C_l>0$ (independent of $z$) such that
		\begin{eqnarray}\label{classic-heat-local-july-4}
		\| f_l\|_{L^\infty(\mathbb R^+;\mathcal H^{l/2-m})}
		\leq C_l  \| f_{l-1}\|_{L^\infty(\mathbb R^+;\mathcal H^{(l-1)/2-m})}.
		\end{eqnarray}}
	We will show that (\ref{classic-heat-local-july-4}) is satisfied by  induction. To this end, we first show that (\ref{classic-heat-local-july-4}) is satisfied with $l=1$. Indeed, from (\ref{classic-heat-local-july-3}), we know that
	$ f_1=\rho_1 f_0$. This, along with   (\ref{selfadjoit-elliptic-operator}), yields
	\begin{eqnarray}\label{classic-heat-local-july-6}
	\frac{d}{dt} f_1(t) - A f_1(t) = F_1(t),~t>0;
	~ f_1(0)=0,
	\end{eqnarray}
	where
	\begin{eqnarray}\label{classic-heat-local-july-7}
	F_1(t) := (- \Delta \rho_1) f_0
	-  2 \nabla \rho_1 \cdot \nabla f_0,
	~t>0.
	\end{eqnarray}
	Meanwhile, it follows from (\ref{classic-heat-local-july-2})
		that
$
		\Delta\rho_1  \in C_0^\infty(\Omega)
	$ and $
		\nabla \rho_1 \in C_0^\infty(\Omega;\mathbb R^n).
$
	From these and (\ref{classic-heat-local-july-7}),   we can apply (\ref{classic-heat-local-july-5}) (with $(\chi_1,\chi_2,\alpha)=(\Delta \rho_1,\nabla \rho_1,m)$)  to find  a constant $\widehat C_1>0$ (independent of $z$) such that
	\begin{eqnarray}\label{classic-heat-local-july-8}
	\| F_1\|_{L^\infty(\mathbb R^+;\mathcal H^{-1-m})}
	\leq \widehat C_1  \| f_0\|_{L^\infty(\mathbb R^+;\mathcal H^{-m})}.
	\end{eqnarray}
	We now claim that there is a $\widehat C_2>0$ (independent of $z$) such that
	\begin{eqnarray}\label{classic-heat-local-july-9}
	\| f_1\|_{L^\infty(\mathbb R^+;\mathcal H^{1/2-m})}
	\leq \widehat C_2  \| f_0\|_{L^\infty(\mathbb R^+;\mathcal H^{-m})}.
	\end{eqnarray}
	Indeed, from (\ref{classic-heat-local-july-6}), we can find a $C>0$ (independent of $z$) such that for each $t>0$,
	\begin{align*}
	\|f_1(t)\|_{ \mathcal H^{ 1/2-m} }
	\leq& \int_0^t \| e^{(t-s)A} \|_{\mathcal L(\mathcal H^{-1-m}; \mathcal H^{1/2-m} )}
	\|F_1(s)\|_{\mathcal H^{-1-m}} ds
	\nonumber\\
	\leq &  \bigg(	\int_0^t \Big\| \Big[ (-A)^{3/4} e^{(t-s)A/2} \Big] e^{(t-s)A/2} \Big\|_{\mathcal L(\mathcal H^{-1-m} )} ds  \bigg)
	\|F_1\|_{ L^\infty(\mathbb R^+;\mathcal H^{-1-m} )}
	\nonumber\\
	\leq& \bigg( \int_0^{t} C (t-s)^{-3/4} e^{- (t-s)\eta_1/2} ds \bigg)
	\|F_1\|_{ L^\infty(\mathbb R^+;\mathcal H^{-1-m} )}.
	\end{align*}
	This, along with \eqref{classic-heat-local-july-8},  leads to \eqref{classic-heat-local-july-9}. Thus, (\ref{classic-heat-local-july-4}) holds for $l=1$.

	Next, we assume that for some $l_0\in \{1,\cdots,2m-1\}$,  (\ref{classic-heat-local-july-4}) holds for all $l\leq l_0$.
	We aim to prove (\ref{classic-heat-local-july-4}) with $l=l_0+1$.
	In fact, from (\ref{classic-heat-local-july-3}) and (\ref{classic-heat-local-july-2}), we have
	$ f_{l_0+1}=\rho_{l_0+1} f_{l_0}$. By this and using a similar method to that used in the proof
	of  (\ref{classic-heat-local-july-4}) with $l=1$, we can determine (\ref{classic-heat-local-july-4})
	with $l=l_0+1$. This ends the proof of  Step 2.

	\vskip 5pt
	\noindent {\it Step 3. We  verify (\ref{heat-local-estimate-Hs-L2}).}
	\vskip 5pt
	\noindent Since $z\in \mathcal H^{-m}$, it follows from (\ref{classic-heat-local-july-3}) and (\ref{def-Hs-space-norm}) that
	\begin{eqnarray*}
		\|f_0(t)\|_{\mathcal H^{-m} }
		= \| e^{tA} z \|_{\mathcal H^{-m} }
		\leq  \|z\|_{\mathcal H^{-m} },\; t\geq 0.
	\end{eqnarray*}
	This, together with (\ref{classic-heat-local-july-4}) and (\ref{classic-heat-local-july-1})--\eqref{classic-heat-local-july-3}, yields (\ref{heat-local-estimate-Hs-L2}) with $s=-m$.

	Hence, we have completed the proof of Lemma \ref{lem-heat-up-to-0}.
\end{proof}

\subsection{Technical  proofs}\label{Appendix-SeveralProofs}

In this subsection, we present the proofs of several results stated before.  We start with the proof of Corollary \ref{cor-only-h1}.

\begin{proof}[Proof of Corollary \ref{cor-only-h1}]
    We arbitrarily fix a $\beta \in [2,3]$.
    Let $\mathcal R_c$ be given by Proposition \ref{cor-heat-wave-N=2-by-yb-20211126}. We define
    \begin{align*}
        \widehat{\mathcal R}_c(t,\tau) := e^{-t\tau}\Big(1 + M(0) t\tau^{-1} + M(0) \tau^{-2} \Big)\tau^{\beta}  - \mathcal R_c(t,\tau) \tau^{\beta-3},\;\;t>0,\;\tau>0.
    \end{align*}
    Then, by spectral functional calculus, we have
    \begin{align}\label{Rc-for-W-by-yb-202110}
        \widehat{\mathcal R}_c(t,-A) = e^{tA}\Big(1- M(0) tA^{-1} + M(0) A^{-2} \Big)(-A)^{\beta}  - \mathcal R_c(t,-A) (-A)^{\beta-3},\;\;t>0.
    \end{align}
    Now, \eqref{demcomposition-N=2-only-h1} follows from \eqref{demcomposition-N=2-PW} and \eqref{Rc-for-W-by-yb-202110}.

    Next, because $2\leq \beta\leq 3$,  for each $j\in\{0,1,2\}$ and each $s\in \mathbb R$,
    \begin{align}\label{2.12,2-23}
        \| (-A)^{\beta-j} e^{tA} \|_{\mathcal L( \mathcal H^s )}
        \leq \sup_{\lambda>0} \lambda^{\beta-j} e^{-t\lambda }
        \leq 2 t^{j-\beta},~~
        t>0.
    \end{align}
    Hence, \eqref{R1R2-regularity-only-h1} follows from \eqref{Rc-for-W-by-yb-202110}, \eqref{2.12,2-23}, and \eqref{R1R2-regularity-by-yb-20211126} directly. This
    completes the proof.
\end{proof}

    The proof of Corollary \ref{cor-only-parabolic} is  as follows.

\begin{proof}[Proof of Corollary \ref{cor-only-parabolic}]
    Let $\mathcal R_c$ be given by Proposition \ref{cor-heat-wave-N=2-by-yb-20211126}.
    We define
    \begin{align*}%\label{sec4-last-august-2021}
        \widetilde{\mathcal R}_c(t,\tau) :=
        e^{-t\tau}\big( tM(0)\tau + M(0) \big) - M(t)
        - \mathcal R_c(t,\tau) \tau^{-1},
        ~t>0,~\tau>0.
    \end{align*}
    Then, by spectral functional calculus, we find
    \begin{align*}
        \widetilde{\mathcal R}_c(t,-A) =
        e^{t A}\big( -tM(0) A + M(0) \big) - M(t)
        + \mathcal R_c(t,-A) A^{-1},
        ~t>0.
    \end{align*}
    This, along with \eqref{demcomposition-N=2-PW}, yields 
    \eqref{demcomposition-N=2-only-parabolic}.

    Next, 	we can directly check that   for each $s\in \mathbb R$,
    $\| tA e^{tA} \|_{\mathcal L( \mathcal H^s )}
    \leq 1$ when $t\geq 0$. This, along with \eqref{R1R2-regularity-by-yb-20211126},
    yields the desired estimate of the above $\widetilde{\mathcal{R}}_c$ and  completes the proof.
\end{proof}

  The proof of Proposition \ref{202202yb-prop-DoubleSidesOb-NecessaryWeights} is presented as follows.

\begin{proof}[Proof of Proposition \ref{202202yb-prop-DoubleSidesOb-NecessaryWeights}]
    Without  loss of generality, we can assume that $\alpha\geq 0$. Otherwise, we can replace
    $\alpha<0$ by $\alpha=0$.
    %    We obtain from \eqref{2.44-2-25} that
    %    \begin{align}\label{2.45-2-25}
        %        \sup_{y_0\in L^2(\Omega), \|y_0\|_{\mathcal H^{-4}} \leq 1}
        %        \|\varPhi(t)y_0\|_{L^1_{\mu}(0,\varepsilon_0;L^2(B(x_0,r)))}
        %        <+\infty.
        %    \end{align}
    It follows from  Corollary \ref{cor-only-parabolic} that
    the map $t \mapsto \varPhi(t)-e^{tA}$ ($t>0$) belongs to $L^\infty(\mathbb R^+; \mathcal L(\mathcal H^{-4},L^2(\Omega)))$. Meanwhile, note that $\varPhi(t)y_0=y(t,\cdot;y_0)$, $t\geq 0$. Then, by  the triangle inequality and \eqref{202210TJYB-TwoSideObservability-necessary-weight},
    we determine that
    \begin{align*}%\label{20221020-TjYb-NecessaryWeight-toPureHeat}
        &  \sup_{y_0\in L^2(\Omega), \|y_0\|_{\mathcal H^{-4}} \leq 1}
        \int_0^{T}  \|\chi_Q e^{tA}y_0\|_{L^2(\Omega)}  t^{\alpha} dt
        \nonumber\\
        \leq& \sup_{y_0\in L^2(\Omega), \|y_0\|_{\mathcal H^{-4}} \leq 1}
        \int_0^{T}  \|\chi_Q y(t,\cdot;y_0) \|_{L^2(\Omega)}  t^{\alpha} dt
        \nonumber\\
        &  \quad\quad\quad    + \sup_{y_0\in L^2(\Omega), \|y_0\|_{\mathcal H^{-4}} \leq 1}
        \int_0^{T}  \|\chi_Q \big(\varPhi(t)-e^{tA}\big) y_0\|_{L^2(\Omega)}  t^{\alpha} dt
        <+\infty.
    \end{align*}
    Since $Q\supset (0,\varepsilon_0)\times\omega$ and
    $\omega$ contains a closed ball $B(x_0,r)$ (centered at $x_0$ with radius $r$),
    from the above inequality and Lemma \ref{lem-heat-up-to-0} in the Appendix, we see that
    \begin{align*}
        \sup_{y_0\in C_0^\infty(B(x_0, r/2 )), \,\|y_0\|_{\mathcal H^{-4}} \leq 1}
        \int_0^{\varepsilon_0} \|e^{tA}y_0\|_{ L^2(\Omega)}  t^{\alpha} dt
        <+\infty,
    \end{align*}
    which shows that for some $C_1>0$,
    \begin{align}\label{upper-bound-Lp-heat-2202}
        \int_0^{\varepsilon_0} \|e^{tA}A^2z\|_{L^2(\Omega)}  t^{\alpha} dt
        \leq C_1 \|z\|_{L^2(\Omega)}\;\;\mbox{for all}\;\;z\in C_0^\infty(B(x_0, r/2 )).
    \end{align}
    Next, we take a sequence $\{y_k\}_{k=1}^{+\infty} \subset C_0^{\infty}(B(x_0, r/2 ))$ such that
    \begin{align}\label{yk-L2NormOne-WeakLimZero}
        \|y_k\|_{L^2(\Omega)} =1\;\mbox{for all}\;\;k\in \mathbb N^+;\;\;
        y_k   \;\rightharpoonup\;   0
        ~\text{weakly in}~  L^2(\Omega).
    \end{align}
    We are going to show that there is a subsequence $\{k_l\}_{l=1}^{+\infty}\subset \mathbb N^+$
    (with $k_1<k_2<\cdots$) such that
    \begin{align}\label{etA-zm-Bounds-202202}
        \limsup_{m\rightarrow+\infty}  \sum_{l=m}^{m+m_0-1}
        \int_{0}^{+\infty}
        \| e^{tA} A^2 y_{k_l}\|_{L^2(\Omega)}  t^{\alpha} dt
        \leq C_1 \sqrt{m_0}\;\;\mbox{for each}\;\;m_0\in \mathbb N^+\setminus\{1\}.
    \end{align}
    The proof of \eqref{etA-zm-Bounds-202202} is organized in the following two steps.

    \noindent{\it Step 1. We claim that there is a subsequence $\{k_l\}_{l=1}^{+\infty}\subset \mathbb N^+$ and a decreasing sequence $\{t_l\}_{l=1}^{+\infty} \subset (0,\varepsilon_0]$
        such that for each $l\in \mathbb{N}^+$,
        \begin{align}\label{ykl-almost-OrthogonalAndConcentated-2202}
            \sum_{m<l} | \langle y_{k_l}, y_{k_m} \rangle_{L^2(\Omega)} |
            +      \int_{\mathbb R^+\setminus(t_{l+1}, t_l)}
            \| e^{tA} A^2 y_{k_l}\|_{L^2(\Omega)}  t^{\alpha} dt
            \leq 2^{-l}.
        \end{align}
        Here,  $\sum_{m<l} | \langle y_{k_l}, y_{k_m} \rangle_{L^2(\Omega)}|:=0$ when $l=1$.}

    \vskip 5pt
    For this purpose, we first take $t_1=\varepsilon_0$. We next take $k_1\in \mathbb{N}^+$ sufficiently large
    that
    \begin{eqnarray}\label{2.50--2-27}
        \int_{t_1}^{+\infty}\|e^{tA}A^2y_{k_1}\|_{L^2(\Omega)}  t^{\alpha} dt\leq \frac{1}{4}
    \end{eqnarray}
    (the existence of such $k_1$ is ensured by the exponential decay property of
    $e^{tA}$ and the weak convergence in \eqref{yk-L2NormOne-WeakLimZero}).
    We then take
    $t_2\in(0,t_1)$ sufficiently small that
    \begin{eqnarray}\label{2.50--2-28}
        \int_{0}^{t_2}\|e^{tA}A^2y_{k_1}\|_{L^2(\Omega)}  t^{\alpha} dt\leq \frac{1}{4}
    \end{eqnarray}
    (the existence of such a $t_2$
    can  be obtained directly from \eqref{upper-bound-Lp-heat-2202}).
    Thus, it follows from \eqref{2.50--2-27} and \eqref{2.50--2-28}
    that  the above $k_1$ and $\{t_l\}_{l=1}^2$ satisfy \eqref{ykl-almost-OrthogonalAndConcentated-2202}
    with $l=1$.

    Next, we suppose inductively that for some $N\in \mathbb{N}^+$,
    there are $k_1< \cdots< k_N$ and $t_1>t_2>\cdots>t_N>t_{N+1}$
    satisfying  \eqref{ykl-almost-OrthogonalAndConcentated-2202}
    with $l=N$.
    We aim to find  $k_{N+1}>k_N$ and $t_{N+2}<t_{N+1}$ such that
    $\{k_l\}_{l=1}^{N+1}$ and $\{t_l\}_{l=1}^{N+2}$ satisfy \eqref{ykl-almost-OrthogonalAndConcentated-2202}
    with $l=N+1$. To this end, we use
    the weak convergence in \eqref{yk-L2NormOne-WeakLimZero} and the exponential decay
    property of $e^{tA}$ to find $\hat k>k_N$ such that
    \begin{align}\label{ykl-almost-OrthogonalAndConcentated-2202-N}
        \sum_{m<N+1} | \langle y_{\hat k}, y_{k_m} \rangle_{L^2(\Omega)} |
        +
        \int^{+\infty}_{ t_{N+1}}
        \| e^{tA} A^2 y_{\hat k}\|_{L^2(\Omega)}  t^{\alpha} dt
        \leq 2^{-1} 2^{-N-1} .
    \end{align}
    Meanwhile, since $y_{\hat k}\in C_0^\infty(\Omega)$, we have
    $\int^{\varepsilon_0}_0
    \| e^{tA} A^2 y_{\hat k}\|_{L^2(\Omega)}  t^{\alpha} dt<+\infty$.
    Thus, there is a $\hat t\in (0,t_{N+1})$ sufficiently small that
    \begin{align}\label{2.51-2-25}
        \int_0^{\hat t}
        \| e^{tA} A^2 y_{\hat k}\|_{L^2(\Omega)}  t^{\alpha} dt
        \leq 2^{-1} 2^{-N-1}.
    \end{align}
    Let $k_{N+1}:=\hat k$ and $t_{N+2}:=\hat t$.
    Then, it follows from \eqref{2.51-2-25} and  \eqref{ykl-almost-OrthogonalAndConcentated-2202-N}
    that   $\{k_l\}_{l=1}^{N+1}$ and $\{t_l\}_{l=1}^{N+2}$ satisfy
    \eqref{ykl-almost-OrthogonalAndConcentated-2202} with  $l= N+1$.
    Thus, by the induction, the claim in {\it Step 1} is true.

    \vskip 5pt
    \noindent {\it Step 2. We show that \eqref{etA-zm-Bounds-202202} is satisfied.}

    We arbitrarily fix $m_0\in \mathbb N^+\setminus\{1\}$ and  $m\in \mathbb N^+$. We define
    \begin{align}\label{zm-Jm-Defs-202202}
        z_m:=  \sum_{l\in \mathcal J_{m}^{m_0} } y_{k_l}
        \in C_0^{\infty}(\omega)
        ~\text{where}~
        \mathcal J_{m}^{m_0}:=\{m,\cdots,m+m_0-1\}.
    \end{align}
    It follows from \eqref{zm-Jm-Defs-202202}, \eqref{yk-L2NormOne-WeakLimZero}, and \eqref{ykl-almost-OrthogonalAndConcentated-2202}
    that
    \begin{align}\label{zm-NormBounds-202202}
        \|z_m\|_{L^2(\Omega)}^2 =&  \sum_{l=m}^{m+m_0-1}
        \Big( \langle y_{k_l}, y_{k_l} \rangle_{L^2(\Omega)}
        +  2\sum_{m\leq l'<l}  \langle y_{k_l}, y_{k_{l'}} \rangle_{L^2(\Omega)}  \Big)
        \nonumber\\
        \in& \Big( m_0 - 2^{2-m},  m_0 +  2^{2-m} \Big).
    \end{align}
    Meanwhile, we can directly check that  for each $t>0$,
    \begin{align*}
        \chi_{(t_{m+m_0},t_m)}(t)z_m
        =& \bigg( \sum_{l'\in \mathcal J_{m}^{m_0}} \chi_{(t_{l'+1},t_{l'})}(t) \bigg)
        \sum_{l\in \mathcal J_{m}^{m_0}} y_{k_l}
        \nonumber\\
        =& \sum_{l\in \mathcal J_{m}^{m_0}} \chi_{(t_{l+1},t_l)}(t) y_{k_l}
        + \sum_{l,l'\in \mathcal J_{m}^{m_0}, l\neq l' } \chi_{(t_{l'+1},t_{l'})}(t) y_{k_l}.
    \end{align*}
    From the above, we can directly verify that
    \begin{align*}
        \int_0^{\varepsilon_0}  \|e^{tA} A^2 z_m\|_{L^2(\Omega)}  t^{\alpha} dt
        \geq&  \int_0^{\varepsilon_0} \|e^{tA} A^2 \chi_{(t_{m+m_0},t_m)}(t)z_m\|_{L^2(\Omega)}  t^{\alpha} dt
        \nonumber\\
        \geq& \int_0^{\varepsilon_0} \Big\|
        e^{tA} A^2 \sum_{l\in \mathcal J_{m}^{m_0}} \chi_{(t_{l+1},t_l)}(t) y_{k_l}
        \Big\|_{L^2(\Omega)}   t^{\alpha} dt
        \nonumber\\
        & -
        \sum_{l,l'\in \mathcal J_{m}^{m_0}, l\neq l' }
        \int_0^{\varepsilon_0}  \|e^{tA} A^2 \chi_{(t_{l'+1},t_{l'})}(t) y_{k_l}
        \|_{L^2(\Omega)}  t^{\alpha} dt,
    \end{align*}
    which, together with the definition of $\mathcal J_{m}^{m_0}$ (in \eqref{zm-Jm-Defs-202202}), implies that
    \begin{align*}
        \int_0^{\varepsilon_0} \|e^{tA} A^2 z_m\|_{L^2(\Omega)}  t^{\alpha} dt
        \geq& \sum_{l\in \mathcal J_{m}^{m_0}}
        \int_{t_{l+1}}^{ t_l}
        \| e^{tA} A^2 y_{k_l}\|_{L^2(\Omega)}  t^{\alpha} dt
        \nonumber\\
        & - \sum_{l\in \mathcal J_{m}^{m_0}} m_0
        \int_{\mathbb R^+ \setminus(t_{l+1}, t_l)}
        \| e^{tA} A^2 y_{k_l}\|_{L^2(\Omega)}  t^{\alpha} dt.
    \end{align*}
    The above, along with  \eqref{ykl-almost-OrthogonalAndConcentated-2202} and \eqref{upper-bound-Lp-heat-2202}, implies that
    \begin{align*}
        &\sum_{l\in \mathcal J_{m}^{m_0}}
        \int_{0}^{+\infty}
        \| e^{tA} A^2 y_{k_l}\|_{L^2(\Omega)}  t^{\alpha} dt
        \leq \sum_{l\in \mathcal J_{m}^{m_0}}
        \int^{t_{l+1}}_{ t_l}
        \| e^{tA} A^2 y_{k_l}\|_{L^2(\Omega)}  t^{\alpha} dt
        +   \sum_{l\in \mathcal J_{m}^{m_0}} 2^{- l  }
        \nonumber\\
        \leq&
        \bigg(  \int_0^{\varepsilon_0}
        \|e^{tA} A^2 z_m\|_{L^2(\Omega)}  t^{\alpha} dt
        + m_0\sum_{l\in \mathcal J_{m}^{m_0}} 2^{- l }
        \bigg)
        + \sum_{l\in \mathcal J_{m}^{m_0}} 2^{- l }
        \nonumber\\
        \leq& C_1 \| z_m\|_{L^2(\Omega)} + (m_0+1)\sum_{l\in \mathcal J_{m}^{m_0}} 2^{- l }.
    \end{align*}
    This, together with \eqref{zm-NormBounds-202202} and the definition of $\mathcal J_{m}^{m_0}$ (see \eqref{zm-Jm-Defs-202202}), leads to \eqref{etA-zm-Bounds-202202}.

    Finally, we will use \eqref{yk-L2NormOne-WeakLimZero} and \eqref{etA-zm-Bounds-202202}
    to show  $\alpha >1$.
    We observe that for each $z=\sum_{j\geq 1} a_j e_j \in C_0^\infty(\Omega)$,
    \begin{align*}
        &\int_0^{+\infty} \|e^{tA} A^2 z \|_{L^2(\Omega)} t^{ \alpha } dt
        =\int_0^{+\infty} \left\| \Big(
        e^{-t\eta_j } \eta_j^{2} a_j
        \Big)_{j\geq 1} \right\|_{\ell^{ 2 }} t^{\alpha}   dt
        \nonumber\\
        \geq&
        \sup_{\| (b_j)_{j\geq 1} \|_{ \ell^{ 2 } } \leq 1}
        \int_0^{+\infty}
        \bigg[
        \sum_{j\geq 1}
        \Big( e^{-t\eta_j } \eta_j^{2} a_j \Big) b_j
        \bigg]
        t^{\alpha} dt
        \nonumber\\
        =&
        \left\| \Big(
        \eta_j^{1-\alpha}  a_j
        \Big)_{j\geq 1} \right\|_{\ell^{ 2 }}
        \int_0^{+\infty} e^{-t} t^{\alpha} dt
        =  \left\| (-A)^{ 1-\alpha }  z \right\|_{L^2(\Omega)}
        \int_0^{+\infty} e^{-t} t^{\alpha} dt.
    \end{align*}
    From this and \eqref{etA-zm-Bounds-202202}, it follows that
    \begin{align*}
        \limsup_{m\rightarrow+\infty}
        \sum_{l=m}^{m+m_0-1}
        \left\| (-A)^{ 1-\alpha }  y_{k_l} \right\|_{L^2(\Omega)}
        \leq C_1 \sqrt{m_0 }
        \Big/\int_0^{+\infty} e^{-t} t^{\alpha} dt\;\;\mbox{for all}\;\; m_0\in \mathbb N^+\setminus\{1\}.
    \end{align*}
    Then, by \eqref{yk-L2NormOne-WeakLimZero}, after some direct computations, we determine that $\alpha>1$.
    This ends the proof of Proposition \ref{202202yb-prop-DoubleSidesOb-NecessaryWeights}.
\end{proof}

\subsection{Variation of constant formula}
This subsection provides a variation of the constant formula for equation \eqref{our-system}. We did not find this in the literature and present it here for the sake of completeness.

\begin{proposition}\label{prop-constant-variation}
	When $y_0\in L^2(\Omega)$ and
	 $u\in L^1_{loc}([0,+\infty); L^2(\Omega))$,
	\begin{align}\label{constan-variation-formula}
	y(t;y_0,u) = \varPhi(t) y_0  + \int_0^t \varPhi(t-s) (\chi_Q u)(s) ds,
	\;\; t\geq 0.
	\end{align}
\end{proposition}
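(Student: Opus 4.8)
The plan is to reduce the claimed formula to a statement about the (unique) mild/spectral solution of the controlled equation by expanding both sides in the eigenbasis $\{e_j\}_{j\geq 1}$ of $-A$. First I would recall that, by the construction underlying Theorem \ref{cor-0423-demcomposition}, the flow $\varPhi(t)$ acts diagonally in this basis: writing $y_0=\sum_j a_j e_j$, one has $\varPhi(t)y_0=\sum_j a_j \varphi_j(t) e_j$, where for each $j$ the scalar $\varphi_j$ solves the scalar Volterra ODE $\varphi_j'(t)+\eta_j\varphi_j(t)+\int_0^t M(t-s)\varphi_j(s)\,ds=0$, $\varphi_j(0)=1$. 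Then I would fix $u\in L^1_{loc}([0,+\infty);L^2(\Omega))$, write $(\chi_Q u)(s)=\sum_j c_j(s) e_j$ with $c_j\in L^1_{loc}$, and observe that the projection of the solution $y(\cdot;y_0,u)$ onto $e_j$ satisfies the scalar inhomogeneous Volterra ODE $\dot{y}_j+\eta_j y_j+\int_0^t M(t-s)y_j(s)\,ds=c_j(t)$, $y_j(0)=a_j$. Its unique solution is given by the scalar variation-of-constants (Duhamel) formula $y_j(t)=a_j\varphi_j(t)+\int_0^t \varphi_j(t-s)c_j(s)\,ds$, which is a classical fact for linear Volterra integro-differential equations with $L^1_{loc}$ forcing. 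Summing over $j$ against $e_j$ reproduces \eqref{constan-variation-formula}, once one checks the series converge appropriately.

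The key technical steps, in order, are: (1) justify that $\varphi_j(t-s)$ is the resolvent kernel for the scalar problem, i.e. establish the scalar Duhamel identity — this can be done either by differentiating the candidate and using $\varphi_j(0)=1$, or by a Laplace-transform/Picard-iteration argument, and requires only that $s\mapsto c_j(s)$ be in $L^1_{loc}$; (2) show that the $e_j$-component of the function $t\mapsto\int_0^t\varPhi(t-s)(\chi_Q u)(s)\,ds$ equals $\int_0^t\varphi_j(t-s)c_j(s)\,ds$ — this is Fubini together with the diagonal action of $\varPhi$, and one needs $\int_0^t\varPhi(t-s)(\chi_Q u)(s)\,ds$ to be well-defined in $L^2(\Omega)$, which follows from $\|\varPhi(r)\|_{\mathcal L(L^2(\Omega))}$ being locally bounded in $r$ (a consequence of Theorem \ref{cor-0423-demcomposition}, e.g. via Corollary \ref{cor-only-parabolic}) and $u\in L^1_{loc}$; (3) identify $\sum_j y_j(t)e_j$ with the solution $y(t;y_0,u)$ declared in the statement, invoking uniqueness for \eqref{our-system} (the same uniqueness that legitimizes the notation $y(\cdot;y_0,u)$).

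I expect the main obstacle to be purely the bookkeeping of convergence and measurability, rather than any deep new idea: one must verify that the termwise manipulations (interchanging the sum over $j$ with the time-integral in the Duhamel term, and with $\partial_t$ if one argues by differentiation) are legitimate for a general $L^1_{loc}$ control, since $\{\varPhi(t)\}$ is not a semigroup and the naive semigroup-composition shortcut is unavailable — this is exactly the point flagged in the italicized remark preceding the proposition. The cleanest route to sidestep differentiation issues is to prove the identity first for $u\in C_c^\infty((0,\infty);L^2(\Omega))$ (or for finite linear combinations $\sum_{j\leq N} c_j(s)e_j$ with smooth $c_j$), where every step is classical, and then pass to general $u\in L^1_{loc}$ by density, using the local bound $\big\|\int_0^t\varPhi(t-s)v(s)\,ds\big\|_{L^2(\Omega)}\leq \big(\sup_{0\leq r\leq t}\|\varPhi(r)\|_{\mathcal L(L^2(\Omega))}\big)\int_0^t\|v(s)\|_{L^2(\Omega)}\,ds$ to control the error, together with the continuous dependence of the solution of \eqref{our-system} on $u\in L^1_{loc}([0,T];L^2(\Omega))$.
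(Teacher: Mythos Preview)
Your proposal is correct but follows a genuinely different route from the paper. The paper argues by duality: for fixed $t>0$ it pairs the solution $y(\cdot;y_0,u)$ against the backward flow $\varphi(s;z):=\varPhi(t-s)z$ for $z\in\mathcal H^2$, shows that $\varphi$ solves the time-reversed adjoint Volterra equation, computes $\frac{d}{ds}\langle y(s),\varphi(s;z)\rangle_{L^2(\Omega)}$, and uses Fubini on the double time integral coming from the memory term so that the contributions from the homogeneous equation cancel against those of the adjoint equation; the identity then follows by density of $\mathcal H^2$ in $L^2(\Omega)$ together with $\varPhi(t)^*=\varPhi(t)$ (Proposition~\ref{prop-varPhi-expression}). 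Your spectral approach instead diagonalizes the problem along the eigenbasis, reduces to the scalar convolution Volterra ODE, and invokes the classical scalar Duhamel formula $y_j(t)=a_j\varphi_j(t)+\int_0^t\varphi_j(t-s)c_j(s)\,ds$, which indeed holds precisely because the kernel $M(t-s)$ is of convolution type. The paper's duality argument is more robust---it does not require discrete spectrum or the diagonal action of $\varPhi$, so it would transfer to more general self-adjoint $A$---while your argument is more elementary and makes the convolution structure of the resolvent transparent, at the cost of the bookkeeping (convergence of series, density in $u$) that you correctly flag as the main technical content.
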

\begin{proof}
	We arbitrarily fix  $y_0\in L^2(\Omega)$ and
$u\in L^1_{loc}([0,+\infty);L^2(\Omega))$. We simply write
	$y(\cdot)$ for the solution $y(\cdot;y_0,u)$.
	First, (\ref{constan-variation-formula}) is clearly true for $t=0$. We now fix  $t>0$
	and $z\in \mathcal H^2$. We write
	\begin{align}\label{4.11new,7.24}
	\varphi(s;z):=\varPhi(t-s)z,\;\;s\in [0,t].
	\end{align}
	Then, by \eqref{4.11new,7.24}, \eqref{varPhi-y-y0},
	and \eqref{our-obserble-system}, we see that $\varphi(\cdot;z)$ satisfies
	\begin{align}\label{varphi-z-equation}
	\varphi^\prime(s;z) + A\varphi(s;z) +  \int_s^t M(\tau-s) \varphi(\tau;z) d\tau=0,
	~~ s\in (0,t);\;\;\;\varphi(t;z)=z.
	\end{align}
	By \eqref{our-system} and (\ref{varphi-z-equation}), we find that
	\begin{align}\label{4.12,7.24}
	& \big\langle y(t),\varphi(t;z) \big\rangle_{L^2(\Omega)}
	-  \big\langle y_0, \varphi(0;z) \big\rangle_{L^2(\Omega)}
\nonumber\\
	=& \int_0^t \frac{d}{ds} \big\langle y(s;y_0,u), \varphi(s;z) \big\rangle_{L^2(\Omega)}   ds
	\\
	=& \int_0^t \Big\langle Ay(s) + \int_0^s M(s-\tau) y(\tau) d\tau + \chi_Q u(s) ,  \varphi(s;z) \Big\rangle_{ \mathcal H^{-2}, \mathcal H^2 } ds
	+  \int_0^t  \big\langle y(s), \varphi^\prime(s;z) \big\rangle_{L^2(\Omega)}  ds.
\nonumber
	\end{align}
	Meanwhile, by the Fubini theorem, it follows that
	\begin{align*}
	\int_0^t \bigg\langle \int_0^s M(s-\tau) y(\tau) d\tau, \varphi(s;z)
	\bigg\rangle_{ \mathcal H^{-2}, \mathcal H^2 } ds
	=& \int_0^t \int_\tau^t  M(s-\tau) \big\langle y(\tau),  \varphi(s;z) \big\rangle_{ \mathcal H^{-2}, \mathcal H^2 } ds d\tau
	\nonumber\\
	=& \int_0^t \bigg\langle y(s), \int_s^t M(\tau-s)  \varphi(\tau;z) d\tau \bigg\rangle_{L^2(\Omega)} ds.
	\end{align*}
	This, along with (\ref{4.12,7.24}), yields
	\begin{align*}
	&\big\langle y(t),\varphi(t;z) \big\rangle_{L^2(\Omega)}
	-  \big\langle y_0, \varphi(0;z) \big\rangle_{L^2(\Omega)}
\nonumber\\
	=&\int_0^t \Big\langle y(s)  , \varphi^\prime(s;z) + A\varphi(s;z) +  \int_s^t M(\tau-s) \varphi(\tau;z) d\tau \Big\rangle_{L^2(\Omega)} ds
		  + \int_0^t \langle  \chi_Q u(s) ,  \varphi(s;z) \rangle_{L^2(\Omega)}
	ds.
	\end{align*}
	The above, together with  \eqref{varphi-z-equation} and \eqref{4.11new,7.24}, indicates that
	\begin{align*}
	\big\langle y(t),z \big\rangle_{L^2(\Omega)} -  \big\langle y_0, \varPhi(t)z \big\rangle_{L^2(\Omega)}
	= \int_0^t \langle  \chi_Q u(s) ,  \varPhi(t-s)z \rangle_{L^2(\Omega)} ds.
	\end{align*}
	Since $z$ was arbitrarily selected from $\mathcal H^2$, we can use a standard density
	argument in the above equality,
	as well as the first equality in \eqref{eq-varPhi-expression},
	to obtain (\ref{constan-variation-formula}). This completes the proof.
	\end{proof}

\subsection{Functional framework}

The following lemma is cited from \cite[Lemma 5.1]{WWZ-jems}.

\begin{lemma}\label{lemma-0428-fn}
Let $\mathbb K$ be either $\mathbb R$ or $\mathbb C$, and let $X$, $Y$, and $Z$ be three Banach spaces over  $\mathbb K$, with their dual spaces $X^*$, $Y^*$ and $Z^*$, respectively. Let $R\in \mathcal L(Z,X)$ and $O\in \mathcal L(Z,Y)$. Then, the following two propositions are equivalent:

\noindent (i) There exists a $\widehat C_0>0$ and an $\hat\varepsilon_0>0$ such that
\begin{eqnarray}\label{lemma-202202-fn-ii}
 \| R z \|^2_X  \leq   \widehat C_0 \|Oz\|^2_Y
  +  \hat\varepsilon_0 \|z\|_Z^2\;\;\mbox{for all}\;\;z\in Z.
\end{eqnarray}

\noindent (ii) There exists  a $C_0>0$ and an $\varepsilon_0>0$ such that for each $x^*\in X^*$, there is a
$y^*\in Y^*$ that satisfies
\begin{eqnarray}\label{lemma-202202-fn-i}
 \frac{1}{C_0} \|y^*\|^2_{Y^*} + \frac{1}{\varepsilon_0} \|R^*x^*-O^*y^*\|^2_{Z^*}
 \leq \|x^*\|^2_{X^*}.
\end{eqnarray}
Furthermore, when one of the above two propositions holds, the constant pairs $(C_0,\varepsilon_0)$ and $(\widehat C_0,\hat\varepsilon_0)$ can be chosen to be the same.
\end{lemma}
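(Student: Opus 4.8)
The plan is to collapse the stated equivalence to a single Hahn--Banach extension, after packaging the two terms on the right-hand side of \eqref{lemma-202202-fn-ii} into one norm. Concretely, I would introduce the auxiliary Banach space $W := Y\times Z$ endowed with the $\ell^2$-type norm $\|(y,z)\|_W := \big(\widehat C_0\|y\|_Y^2 + \hat\varepsilon_0\|z\|_Z^2\big)^{1/2}$, together with the operator $S\in\mathcal L(Z,W)$ defined by $Sz := (Oz,\,z)$. With this notation, proposition (i) is literally the statement $\|Rz\|_X\le\|Sz\|_W$ for all $z\in Z$. Three elementary facts are then recorded: $S$ is injective because its second coordinate is $z$ itself; $S^*(y^*,z^*) = O^*y^* + z^*$; and, by optimizing with the Cauchy--Schwarz inequality in $\mathbb R^2$, the dual norm on $W^*=Y^*\times Z^*$ is $\|(y^*,z^*)\|_{W^*}^2 = \tfrac{1}{\widehat C_0}\|y^*\|_{Y^*}^2 + \tfrac{1}{\hat\varepsilon_0}\|z^*\|_{Z^*}^2$.

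For the implication (i)$\Rightarrow$(ii): since $S$ is injective and $\|Rz\|_X\le\|Sz\|_W$, the assignment $Sz\mapsto Rz$ is a well-defined linear map of norm $\le 1$ on $\mathrm{range}(S)\subset W$; I would extend it by uniform continuity to $U\in\mathcal L\big(\overline{\mathrm{range}(S)},X\big)$ with $\|U\|\le 1$ and $U\circ S = R$. Given $x^*\in X^*$, apply Hahn--Banach to extend $U^*x^*\in\big(\overline{\mathrm{range}(S)}\big)^*$ to some $w^*=(y^*,z^*)\in W^*$ with $\|w^*\|_{W^*}\le\|U\|\,\|x^*\|_{X^*}\le\|x^*\|_{X^*}$. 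The identity $\langle w^*,Sz\rangle = \langle x^*,Rz\rangle$ for all $z\in Z$ forces $S^*w^* = R^*x^*$, i.e. $z^* = R^*x^*-O^*y^*$; substituting this into the formula for $\|w^*\|_{W^*}^2$ yields exactly \eqref{lemma-202202-fn-i} with $(C_0,\varepsilon_0)=(\widehat C_0,\hat\varepsilon_0)$. Conversely, for (ii)$\Rightarrow$(i): given $x^*\in X^*$ and the associated $y^*$ from \eqref{lemma-202202-fn-i}, decompose $\langle x^*,Rz\rangle = \langle y^*,Oz\rangle + \langle R^*x^*-O^*y^*,\,z\rangle$ and bound the right-hand side, again by Cauchy--Schwarz in $\mathbb R^2$, by $\big(C_0\|Oz\|_Y^2+\varepsilon_0\|z\|_Z^2\big)^{1/2}\,\|x^*\|_{X^*}$; taking the supremum over $\|x^*\|_{X^*}\le 1$ gives \eqref{lemma-202202-fn-ii} with the same constant pair, which also yields the ``furthermore'' clause.

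I do not expect a genuine obstacle: this is a standard duality argument, valid verbatim for $\mathbb K=\mathbb R$ and $\mathbb K=\mathbb C$ (complex Hahn--Banach and the same real-coefficient optimization for the dual norm). The two points that require care are (a) the exact identification of $W^*$ and its norm, since it is precisely the $\ell^2$ pairing of $\|\cdot\|_W$ that makes the constants transfer without loss and thereby validates the last sentence of the statement, and (b) the verification that $Sz\mapsto Rz$ extends continuously to the closed range $\overline{\mathrm{range}(S)}$, which is routine once injectivity of $S$ is noted.
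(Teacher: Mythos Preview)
Your argument is correct. Note, however, that the paper does not prove this lemma at all: it is quoted verbatim from \cite[Lemma~5.1]{WWZ-jems} and used as a black box, so there is no ``paper's own proof'' to compare against. What you have written is a clean, self-contained proof via the standard product-space reduction: packaging the two-term right-hand side of \eqref{lemma-202202-fn-ii} into a single norm on $W=Y\times Z$, so that (i) becomes $\|Rz\|_X\le\|Sz\|_W$, identifying the dual norm on $W^*$ as the weighted $\ell^2$ norm $\tfrac{1}{\widehat C_0}\|y^*\|_{Y^*}^2+\tfrac{1}{\hat\varepsilon_0}\|z^*\|_{Z^*}^2$, and then running Hahn--Banach for (i)$\Rightarrow$(ii) and Cauchy--Schwarz in $\mathbb R^2$ for (ii)$\Rightarrow$(i). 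Both directions transfer the constant pair without loss, so the ``furthermore'' clause follows as you say. The two points you single out---the exact dual-norm computation and the extension of $Sz\mapsto Rz$ to $\overline{\mathrm{range}(S)}$---are routine and correctly handled (injectivity of $S$ is immediate from its second component).
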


The following result is a consequence of Lemma \ref{lemma-0428-fn}.

\begin{corollary}\label{202202yb-corollary-framework}
With the notation   in Lemma \ref{lemma-0428-fn},  the following two propositions are equivalent:

\noindent (i) There exists a $C_1>0$ such that
\begin{eqnarray*}
 \| R z \|_X  \leq   C_1 \|Oz\|_Y\;\;\mbox{for all}\;\;z\in Z.
\end{eqnarray*}

\noindent (ii) There exists  a $C_2>0$  such that for each $x^*\in X^*$, there is a
$y^*\in Y^*$ that satisfies
\begin{eqnarray*}
R^*x^*-O^*y^*=0 ~\text{in}~ Z^*
~\text{and}~
 \|y^*\|_{Y^*}  \leq C_2 \|x^*\|_{X^*}.
\end{eqnarray*}

Furthermore, when one of the above two propositions holds, the constants $C_1$ and $C_2$ can be chosen to be the same.
\end{corollary}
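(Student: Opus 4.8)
The statement follows from Lemma \ref{lemma-0428-fn}, and I would argue the two implications separately. The implication $(ii)\Rightarrow(i)$ requires no limiting argument: given $z\in Z$ and $x^*\in X^*$ with $\|x^*\|_{X^*}\leq1$, I would choose $y^*\in Y^*$ as in $(ii)$ and chain the duality identities $\langle Rz,x^*\rangle_{X,X^*}=\langle z,R^*x^*\rangle_{Z,Z^*}=\langle z,O^*y^*\rangle_{Z,Z^*}=\langle Oz,y^*\rangle_{Y,Y^*}$, then bound the last term by $\|Oz\|_Y\|y^*\|_{Y^*}\leq C_2\|Oz\|_Y$, and take the supremum over such $x^*$ to conclude $\|Rz\|_X\leq C_2\|Oz\|_Y$. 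Thus $(i)$ holds with $C_1=C_2$.

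For $(i)\Rightarrow(ii)$ the plan is to feed $(i)$ into Lemma \ref{lemma-0428-fn} with a vanishing lower-order term and then pass to the limit. First note that $(i)$ trivially gives $\|Rz\|_X^2\leq C_1^2\|Oz\|_Y^2+\varepsilon\|z\|_Z^2$ for every $\varepsilon>0$ and every $z\in Z$, which is \eqref{lemma-202202-fn-ii} with $(\widehat C_0,\hat\varepsilon_0)=(C_1^2,\varepsilon)$. By Lemma \ref{lemma-0428-fn}, and using its final assertion to keep the same constants, for each $x^*\in X^*$ and each $n\in\mathbb N^+$ (taking $\varepsilon=1/n$) there is $y_n^*\in Y^*$ with $\tfrac{1}{C_1^2}\|y_n^*\|_{Y^*}^2+n\,\|R^*x^*-O^*y_n^*\|_{Z^*}^2\leq\|x^*\|_{X^*}^2$; hence $\|y_n^*\|_{Y^*}\leq C_1\|x^*\|_{X^*}$ and $\|R^*x^*-O^*y_n^*\|_{Z^*}\leq n^{-1/2}\|x^*\|_{X^*}\to0$ as $n\to\infty$.

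It then remains to extract a weak-$*$ limit. The bounded sequence $\{y_n^*\}$ has, by the Banach--Alaoglu theorem, a weak-$*$ convergent subnet (a subsequence if $Y$ is separable, which is the case in every application of this corollary in the paper), with limit $y^*$ satisfying $\|y^*\|_{Y^*}\leq C_1\|x^*\|_{X^*}$ by weak-$*$ lower semicontinuity of the norm. Using $\langle z,O^*y_n^*\rangle_{Z,Z^*}=\langle Oz,y_n^*\rangle_{Y,Y^*}$, the net $O^*y_n^*$ converges weak-$*$ in $Z^*$ to $O^*y^*$; since it also converges in $Z^*$-norm to $R^*x^*$, one gets $O^*y^*=R^*x^*$ in $Z^*$, which yields $(ii)$ with $C_2=C_1$. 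The main obstacle is exactly this final step: Lemma \ref{lemma-0428-fn} only produces approximate solutions $O^*y_n^*\approx R^*x^*$, and one must upgrade them to an exact solution $O^*y^*=R^*x^*$ that still obeys the norm bound; this is possible precisely because of the combination of boundedness of $\{y_n^*\}$ (weak-$*$ compactness), weak-$*$ lower semicontinuity of $\|\cdot\|_{Y^*}$ (preserving the bound), and weak-$*$-to-weak-$*$ continuity of $O^*$ (forcing the constraint in the limit), with the caveat that in the non-separable case the passage is along a subnet rather than a subsequence.
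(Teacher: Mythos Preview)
Your argument is correct. The direction $(i)\Rightarrow(ii)$ follows exactly the paper's route: apply Lemma \ref{lemma-0428-fn} with $\widehat C_0=C_1^2$ and a vanishing parameter $\hat\varepsilon_0$, obtain a bounded family $\{y^*_{\hat\varepsilon_0}\}$ in $Y^*$, and pass to a weak-$*$ limit. You are in fact more careful than the paper at this step: the paper simply extracts a ``subsequence'' without mentioning separability, whereas you correctly flag that in full generality one needs Banach--Alaoglu along a subnet, and you spell out why the constraint $R^*x^*=O^*y^*$ survives in the limit (weak-$*$ continuity of $O^*$ together with norm convergence of $R^*x^*-O^*y_n^*$).

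For $(ii)\Rightarrow(i)$ you take a genuinely different, more elementary path. The paper recycles Lemma \ref{lemma-0428-fn}: from $(ii)$ one has \eqref{lemma-202202-fn-i} with $C_0=C_2^2$ and arbitrary $\varepsilon_0>0$, so the lemma returns \eqref{lemma-202202-fn-ii} with the same constants, and letting $\varepsilon_0\to0$ gives $(i)$ with $C_1=C_2$. Your direct duality computation $\langle Rz,x^*\rangle=\langle Oz,y^*\rangle$ bypasses the lemma entirely and yields the same constant $C_1=C_2$ in one line. Both approaches are valid; yours is shorter and self-contained, while the paper's has the virtue of making the corollary a formal limiting case of Lemma \ref{lemma-0428-fn} in both directions.
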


\begin{proof}
We first show $(i)\Rightarrow(ii)$. Suppose that $(i)$  holds (with $C_1>0$). Then, for each $\hat \varepsilon_0>0$, we have \eqref{lemma-202202-fn-ii} (with $\widehat C_0=C_1^2$). This, along with
Lemma \ref{lemma-0428-fn}, leads to \eqref{lemma-202202-fn-i}, where  $y^*$ and
$(C_0,\varepsilon_0)$ are replaced by
$y^*_{\hat\varepsilon_0}$ and $(C_1^2,\hat\varepsilon_0)$, respectively.
Hence, the family  $\{y^*_{\hat\varepsilon_0} \}_{\hat \varepsilon_0>0}$ is bounded in $Y^*$. Thus, there is a subsequence $\{y^*_{\varepsilon_k} \}_{k\geq 1}$ that converges to $\hat y^*$ weakly star  in $Y^*$. Thus, $(ii)$   holds for $y^*=\hat y^*$ and $C_2=C_1$.

We next show $(ii)\Rightarrow(i)$. We suppose that $(ii)$ holds (with $C_2>0$). Then, for each
 $ \varepsilon_0>0$, we have \eqref{lemma-202202-fn-i}, with $C_0=C_2^2$. This, together with Lemma \ref{lemma-0428-fn},  yields  \eqref{lemma-202202-fn-ii} where
 $(\widehat C_0,\hat\varepsilon_0)$ is replaced by $(C_2^2,\varepsilon_0)$. Hence,  $(i)$
  holds for  $C_1=C_2$. This completes the proof.
\end{proof}

\bigskip
%\footnotesize
\noindent\textit{Acknowledgments.}
The first two authors would like to gratefully thank Dr. Huaiqiang Yu for his valuable comments and suggestions.

The first two authors were partially supported by the National Natural Science Foundation of China under grants   12371450 and 12171359, respectively. The second author was also founded by the Humboldt Research Fellowship program from the Alexander von Humboldt Foundation. 

The  third author has been funded by the Alexander von Humboldt-Professorship program, the Transregio 154 Project “Mathematical Modelling, Simulation and Optimization Using the Example of Gas Networks" of the DFG, the ModConFlex Marie Curie Action, HORIZON-MSCA-2021-DN-01, the COST Action MAT-DYN-NET, grants PID2020-112617GB-C22 and TED2021-131390B-I00 of MINECO (Spain), and by the Madrid Goverment -- UAM Agreement for the Excellence of the University Research Staff in the context of the V PRICIT (Regional Programme of Research and Technological Innovation).

%\bigskip\bigskip
%\noindent\textbf{Data Availability Statement} Data sharing is not applicable to this paper because no
%    dataset was analysed or generated  during the study.
%    
%\bigskip
%\noindent\textbf{Statements and Declarations}
%
%\noindent\textbf{Competing Interests} The authors declare that  they have no financial interests nor any conflict of interest.

\end{document}